\newcommand*{\T}{%
  {\mathpalette\@T{}}%
}
\newcommand*{\@T}[2]{%
  \raisebox{\depth}{$\m@th#1\intercal$}%
}
\newtheorem{assumption}{Assumption}
\newtheorem{newassumption}{Assumption}
\def\vu{{\bf u}}  \def\vw{{\bf w}} \def\vx{{\bf x}}
\def\vy{{\bf y}} \def\vz{{\bf z}}
  \def\calC{\mathcal{C}}
\def\calD{\mathcal{D}}  \def\calF{\mathcal{F}}
 \def\calN{\mathcal{N}} 
\def\calP{\mathcal{P}}  
\def\calS{\mathcal{S}} \def\calT{\mathcal{T}} \def\calU{\mathcal{U}}
\newcommand{\e}{\varepsilon}
\newcommand{\ds}{\, ds}
\newcommand{\R}{\mathbb{R}}
\newcommand{\E}{\mathbb{E}}
\renewcommand{\P}{\mathbb{P}}
\newcommand{\grad}{\nabla}
\newcommand{\lambdaMin}{\lambda_{\textup{min}}}
\newcommand{\lambdaMax}{\lambda_{\textup{max}}}
\newcommand{\xperp}{x_{nc}}
\newcommand{\ones}{{\bf 1}}
\newcommand{\myDiag}{\textup{diag}\,}
\newcommand{\ddt}{\frac{d}{dt}}
\newcommand{\dtau}{\,d\tau}
\newcommand{\myspan}{\textup{span}}
\newcommand{\C}{\mathcal{C}}
\newcommand{\diag}{\text{diag}}
\newcommand{\ns}{{n_s}}
\newcommand{\Proj}{\textup{P}}
\newcommand{\Hess}{\nabla^2}
\newcommand{\CP}{\textup{CP}}
\newcommand{\linG}{W}
\newcommand{\dist}{\textup{dist}}
\begin{document}

\title{Distributed Stochastic Gradient Descent: Nonconvexity, Nonsmoothness, and Convergence to Local Minima}

\author{\name Brian Swenson \email swenson@psu.edu\\ \addr Applied Research Laboratory\\
Pennsylvania State University\\
State College, PA
\AND
\name Ryan Murray \email rwmurray@ncsu.edu\\
\addr Department of Mathematics\\ 
North Carolina State University\\ 
Raleigh, NC
\AND 
\name H. Vincent Poor \email poor@princeton.edu\\ 
\addr Department of Electrical Engineering\\
Princeton University\\
Princeton, NJ
\AND
\name Soummya Kar \email soummyak@andrew.cmu.edu\\
\addr Department of Electrical and Computer Engineering\\
Carnegie Mellon University\\ 
Pittsburgh, PA
}

\editor{}

\maketitle

\begin{abstract}
In centralized settings, it is well known that stochastic gradient descent (SGD) avoids saddle points and converges to local minima in nonconvex problems. However, similar guarantees are lacking for distributed first-order algorithms.
The paper studies \emph{distributed} stochastic gradient descent (D-SGD)---a simple network-based implementation of SGD. Conditions under which D-SGD avoids saddle points and converges to local minima are studied. First, we consider the problem of computing critical points. Assuming loss functions are  nonconvex and possibly nonsmooth, it is shown that, for each fixed initialization, D-SGD converges to critical points of the loss with probability one. Next, we consider the problem of avoiding saddle points. In this case, we again assume that loss functions may be nonconvex and nonsmooth, but are smooth in a \emph{neighborhood} of a saddle point. It is shown that, for any fixed initialization, D-SGD avoids such saddle points with probability one. 
Results are proved by studying the underlying (distributed) gradient flow, using the ordinary differential equation (ODE) method of stochastic approximation, and extending classical techniques from dynamical systems theory such as stable manifolds.
Results are proved in the general context of subspace-constrained optimization, of which D-SGD is a special case. 
\end{abstract}

\begin{keywords}
Nonconvex optimization, distributed optimization, stochastic optimization, saddle point, gradient descent
\end{keywords}

\section{Introduction} \label{sec:intro}
Nonconvex optimization has come to the forefront of machine learning, data science, and signal processing in recent years.
Applications in these areas often involve large-scale optimization problems that are addressed using first-order optimization techniques, i.e., gradient descent and its variants. First-order algorithms are particularly popular because
they are (relatively) computationally efficient, easy to implement, scalable, and achieve excellent results in practice \citep{jin2019nonconvex,hardt2016train,kingma2014adam}.

In this paper we study distributed (i.e., network-based) variants of gradient descent for nonconvex optimization. Distributed algorithms are an important tool for handling large-scale optimization problems that cannot be accommodated on a single machine \citep{lian2017can}. Such algorithms can also be useful for federated learning \citep{konevcny2015federated}, and have applications in a range of important domains including vehicular networks \citep{chang2020distributed}, edge computing \citep{wang2019adaptive}, and distributed control \citep{duchi2011dual}. 

We will consider the following setup:
Suppose there are $N$ computing nodes, or agents. Each agent $n=1,\ldots,N$ possesses a private function $f_n:\R^d\to\R$, that is nonconvex and possibly nonsmooth, and accessible only to agent $n$. Agents are assumed to be equipped with an overlaid communication network which may be used to communicate with neighboring agents. We are interested in optimizing the sum function $f:\R^d\to\R$ given by
\begin{equation} \label{eqn:f-def}
f(x)\coloneqq \sum_{n=1}^N f_n(x).
\end{equation}

Problems of this form are common in practice, with the most prominent example being empirical risk minimization. 
Concretely, suppose that $\calD_n = \{(x_i,y_i)\}_i$ represents a local dataset collected or stored by agent $n$. Let $\ell(\cdot,\cdot)$ denote some predefined loss function 
and let $h(\cdot,\theta)$ denote a parametric hypothesis class, with parameter $\theta$. In empirical risk minimization, the objective is to minimize the empirical risk over the data held by \emph{all} agents, i.e., solve the optimization problem
\begin{equation}\label{eq:opt-prob-ERM}
\min_\theta \sum_{(x,y)\in \bigcup_n \calD_n} \ell(h(x,\theta),y) ~ = ~ \min_\theta \sum_{n=1}^N \sum_{(x,y)\in\calD_n} \ell(h(x,\theta),y),
\end{equation}
where the objective above fits the form of \eqref{eqn:f-def} with $f_n(\theta) = \sum_{(x,y)\in\calD_n} \ell(h(x_i,\theta),y_i)$. Nonconvexity in this setting is typically induced by the function $h$ \citep{goodfellow2016deep}.

A basic but critical theoretical guarantee for \emph{nonconvex} optimization is that an algorithm converges to local minima. This is known to occur for many centralized first-order algorithms \citep{lee2016gradient,lee2019first,pemantle1990nonconv}. However, this issue is not well understood in the distributed setting. Despite the importance of distributed algorithms for multi-agent and large-scale processing, basic theoretical guarantees are largely lacking and there is a paucity of tools for gaining insight into the fundamental structure of \emph{distributed} optimization dynamics near saddle points. 
Motivated by these issues, in this paper we consider distributed stochastic gradient descent (D-SGD)---a simple distributed variant of SGD. We will characterize fundamental convergence properties of D-SGD (namely, avoidance of saddle points and convergence to local minima). 

The main results of the paper will be formally presented in the next section, but may be summarized as follows:
\begin{itemize}
    \item [1.] D-SGD converges to critical points of \eqref{eqn:f-def} when each $f_n$ is nonconvex and nonsmooth (but locally Lipschitz; see Theorems \ref{thrm:discrete-conv} and \ref{nonsmooth-conv-to-cp}).
    \item [2.] D-SGD avoids saddle points of \eqref{eqn:f-def} if each $f_n$ is sufficiently smooth in a neighborhood of the saddle point (see Theorems \ref{thrm:DT-nonconvergence} and \ref{thrm:DT-nonconvergence-gen}). 
\end{itemize}

Regarding the first contribution, there have been many excellent prior works considering the convergence of D-SGD (and other distributed algorithms) to critical points. See Section \ref{sec:related-work} for a detailed discussion. Motivated by applications in machine learning, here we consider convergence to critical points under more general nonsmoothness assumptions than previously considered. 
Previous work on distributed nonconvex nonsmooth optimization typically assumes that nonsmoothness enters the global loss function via an additive convex (or difference of convex) nonsmooth regularizer \citep{di2016next,scutari2019distributed}. This allows for nonsmooth regularization, but precludes important cases such as training neural networks with nonsmooth activation functions. In this paper we consider a substantially relaxed notion of nonsmoothness that encompasses a wide range of popular neural network architectures.

Showing nonconvergence to saddle points is more challenging than showing convergence to critical points. To our knowledge, these results represent the first rigorous analysis of D-SGD showing nonconvergence to saddle points. More precisely, we will show nonconvergence to saddle points with nonsingular Hessian. In the optimization literature, a point is sometimes called \emph{second-order stationary} if it meets the second-order criteria for optimality (the gradient is zero and the Hessian is positive semidefinite). Together, our results demonstrate, for the first time, convergence of D-SGD to second-order stationary points.
We note that several recent works including \citep{vlaski2019distributed1,vlaski2019distributed2,daneshmand2018second,daneshmand2018second-b,hong2018gradient} have obtained important related results concerning saddle point avoidance in distributed settings. See Section \ref{sec:related-work} for a detailed positioning of our results in the literature.

Our analysis techniques for establishing these results notably rely on studying underlying ordinary differential equations (ODEs). 
There has been a line of recent work that analyses optimization algorithms using an ODE approach \citep{su2014differential,krichene2015accelerated, shi2021understanding,davis2018stochastic}.
This is a powerful technique that can significantly simplify the analysis and distill the problem to a setting where fundamental intuition is far more transparent. 
The analysis approach we take in this paper follows that tradition. 


\bigskip
\noindent \textbf{Organization}. The remainder of the paper is organized as follows. Section \ref{sec:main-results} presents the main results, reviews related literature, and introduces notation to be used in the proofs. Sections \ref{sec:general-setup}--\ref{sec:stochastic-analysis-D-SGD} prove the main results (see Section \ref{sec:pf-strategy} for an overview of these sections and the general proof strategy). 
Section \ref{sec:conclusions} concludes the paper.

\section{Setup and Main Results} \label{sec:main-results}
We will now present the D-SGD algorithm and main results of the paper. We will be begin in Sections \ref{sec:DT main results}--\ref{sec:avoiding-saddles-intro} by presenting our main results in the context of smooth loss functions. Subsequently, in Section \ref{sec:nonsmooth-extension} we will generalize our results to the case where loss functions are nonsmooth.\footnote{The results of Section \ref{sec:nonsmooth-extension} are more general than those of Sections \ref{sec:DT main results}--\ref{sec:avoiding-saddles-intro}. However, because handling nonconvex nonsmooth loss functions requires concepts and notation that are not mainstream, we have elected to present these results separately to improve readability.}  To simplify the presentation, Sections \ref{sec:DT main results}--\ref{sec:nonsmooth-extension} only contain the statements of the main results without detailed discussion. In Sections \ref{sec:intro-discussion}--\ref{sec:assumptions-discussion} we will give an in-depth discussion of the results and assumptions used. Section \ref{sec:related-work} discusses related work. Section \ref{sec:high-level-pf-discussion} gives a high-level overview of proof techniques, including our use of ODE-based methods. Section \ref{sec:pf-strategy} gives a detailed roadmap of the proof strategy to be used through the rest of the paper. Finally, Section \ref{sec:prelims} sets up the notation to be used through the rest of the paper.

In D-SGD, agents will be assumed to be equipped with a communication network, represented by an undirected, unweighted graph $G=(V,E)$, where the set of vertices $V$ represents the set of agents and an edge $(i,j)\in E$ between vertices represents the ability of agents to communicate.

\bigskip
\noindent \textbf{D-SGD Algorithm}.
Let $k\geq 1$ be an integer (representing discrete time steps) and let $x_n(k)$ denote agent $n$'s estimate of a minimizer of \eqref{eqn:f-def} at iteration $k$. The D-SGD algorithm is defined agentwise by the recursion
\begin{equation} \label{dynamics_DT}
x_n(k+1) = x_n(k) - \alpha_k\big(\nabla f_n(x_n(k)) + \xi_n(k+1)\big) + \beta_k\sum_{\ell\in \Omega_n}\big(x_\ell(k) - x_n(k)\big),
\end{equation}
for $n=1,\ldots,N$, where $\{\alpha_k\}_{k\geq 1},\{\beta_k\}_{k\geq 1} \subset (0,1]$ are scalar weight parameters, $\xi_n(k)$ is zero-mean noise, and $\Omega_n$ represents the set of neighbors of agent $n$ in the graph $G$. The algorithm is initialized by setting the vector $(x_n(0))_{n=1}^N$ to some point $x_0\in \R^{Nd}$. 

\bigskip
\noindent \textbf{Intuition.} The D-SGD algorithm above follows the discrete-time consensus+innovations form~\citep{karmouraramanan2012distributed} and is related to the class of diffusion~\citep{chen2012diffusion} and distributed gradient descent (DGD)~\citep{nedic2009distributed} processes for distributed optimization.

In order to see how D-SGD relates to classical (centralized) SGD, observe that the algorithm consists of two components: a consensus term $\beta_k\sum_{\ell\in \Omega_n}(x_\ell(k) - x_n(k))$ and a local (stochastic) gradient descent term $- \alpha_k\left(\nabla f_n(x(k)) + \xi_n(k+1)\right)$. The consensus term is related to well-studied consensus algorithms \citep{dimakis2010gossip} (in particular, if one sets $f_n\equiv 0$, then \eqref{dynamics_DT} reduces to a standard consensus algorithm).
Intuitively, the consensus term asymptotically forces each $x_n(k)$ towards the network mean $\bar x(k) := \frac{1}{N} \sum_{n=1}^N x_n(k)$. In turn, the network mean behaves (nearly) like a classical stochastic gradient descent process. To see this, one takes the average over agents on both sides of \eqref{dynamics_DT} to obtain
\begin{equation} \label{eq:mean-D-SGD}
\bar x(k+1) \approx -\alpha_k\left(\nabla f(\bar x(k)) + \bar \xi(k+1)\right),
\end{equation}
where $\bar \xi(k) := \frac{1}{N}\sum_{n=1}^N \xi_n(k)$ is the network-averaged noise and $f$ is given by \eqref{eqn:f-def}.\footnote{The approximate equality is due to the fact that it deals with $\nabla f(\bar x(k))$ rather than $\sum_{i=1}^N \nabla f_n(x_n(k))$. This is made rigorous in Section \ref{sec:conv-to-CP}.}
Thus, all together, we will see that $\bar x(k)$ behaves like classical SGD and $x_n(k)\to \bar x(k)$ for each agent $n$.

\subsection{Convergence to Critical Points} \label{sec:DT main results}
We will now consider convergence to critical points when loss functions are smooth. 
We will make the following assumptions, recalling that a detailed discussion of assumptions can be found in Section \ref{sec:assumptions-discussion}. When we make an assumption concerning $f_n$, we mean that the assumption holds for each $f_n$ in \eqref{eqn:f-def}.
\begin{assumption} \label{a:smooth-stuff}
$f_n:\R^d\to \R$ is continuously differentiable with locally Lipschitz continuous gradient.
\end{assumption}
\begin{assumption} \label{a:coercive-sorta}
There exists a radius $R>0$ and constants $C_1,C_2>0$ such that
\begin{equation} 
\big\langle  \frac{\nabla f_n(x)}{\|\nabla f_n(x)\|},\frac{x}{\|x\|} \big\rangle \geq C_1 \quad \mbox{ and } \quad \|\nabla f_n(x)\|\leq C_2\|x\|
\end{equation}
for all $\|x\|\geq R$.
\end{assumption}
\begin{assumption} \label{a:grab-bag}
$~$
\begin{enumerate}
    \item The graph $G=(V,E)$ is undirected, unweighted, and connected.
   \item $\alpha_k = \Theta\left( k^{-\tau_\alpha}\right)$ and $\beta_k = \Theta\left(k^{-\tau_\beta}\right)$ with $0< \tau_\beta < \tau_\alpha$, $\frac{1}{2} < \tau_\alpha \leq 1$.
   \item The gradient noise at each agent $n$ 
   satisfies
  $$
  \E(\xi_{n}(k)\vert \calF_{k-1}) = 0 \quad \mbox{ and }  \quad \E(\|\xi_{n}(k)\|^2\vert \calF_{k-1})\leq B
  $$
  for some $B>0$, and all $k\geq 1$.
\end{enumerate}
\end{assumption} 
\begin{assumption} \label{a:CP-meas-zero}
Let $\CP_f\subset \R^d$ denote the set of critical points of $f$. The set $\R\backslash f(\CP_f)$ is dense in $\R$.
\end{assumption}
Assumption \ref{a:smooth-stuff} is a standard smoothness assumption. Assumption \ref{a:coercive-sorta} ensures that the gradient points outwards asymptotically, and imposes a mild restriction on the growth rate of the gradient. 
Note that this assumption is satisfied in the case of $\ell_1$ and $\ell_2$ regularization. 
Assumption \ref{a:grab-bag}, part 1 ensures that information can diffuse through the network. Part 2 assumes a convenient form for the update weights (see Section \ref{sec:prelims} for a formal definition of $\Theta(\cdot)$ notation). Part 3 makes a standard assumption that the gradient noise is zero mean with bounded variance. 

Finally, in Assumption \ref{a:CP-meas-zero} we recall that
a point $x^*$ is said to be a critical point of $f$ if $\nabla f(x^*) = 0$. The set of critical \emph{values} is given by the image of the set of critical points, $f(\CP_f)$. Assumption \ref{a:CP-meas-zero} simply states that the set of non-critical values is dense in $\R$. While this assumption is quite technical, we emphasize that it is a mild assumption commonly used obtain convergence to critical points in stochastic approximation procedures; cf. \citep{davis2018stochastic,duchi2018stochastic,benaim2005stochastic}. The assumption 
holds in many practical circumstances of interest involving nonsmooth loss functions \citep{davis2018stochastic}.

Our first main result, stated next, is that D-SGD achieves consensus and converges to critical points under these assumptions. 
\begin{theorem}[Convergence to Critical Points]\label{thrm:discrete-conv}
Let $\{(x_n(k))_{n=1}^N\}_{k\geq 1}$ be a D-SGD process  \eqref{dynamics_DT} with $f$ given by \eqref{eqn:f-def}. Suppose Assumptions \ref{a:smooth-stuff}--\ref{a:CP-meas-zero} hold. Then, given any fixed initial condition, for each $n=1,\ldots,N$ the following hold with probability 1:
\begin{enumerate}
  \item [(i)] Agents achieve consensus in the sense that $\lim_{k\to\infty} \|x_n(k) - x_\ell(k)\|=0$ for all $\ell=1,\ldots,N$.
  \item [(ii)]$x_n(k)$ converges to the set of critical points of $f$.
\end{enumerate}
\end{theorem}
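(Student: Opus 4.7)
I would carry out a two-timescale stochastic approximation analysis built on the decomposition of the joint state $\vX(k) = (x_1(k),\ldots,x_N(k))\in\R^{Nd}$ into its \emph{consensus component} (i.e., projection onto the subspace $\myspan(\ones\otimes I_d)$) and the \emph{disagreement component} $\vX^\perp(k)$ orthogonal to it. Stacking \eqref{dynamics_DT} gives, with $\vL = L\otimes I_d$ for the graph Laplacian $L$,
\[
\vX(k+1) = \vX(k) - \alpha_k\bigl(\nabla F(\vX(k)) + \xi(k+1)\bigr) - \beta_k \vL\vX(k),
\]
where $F(\vX) = \sum_n f_n(x_n)$. Because $\tau_\beta < \tau_\alpha$, the Laplacian dynamics act on the faster timescale and will drive $\vX^\perp(k)\to 0$; on the slower timescale, the network average $\bar x(k) = \frac{1}{N}\sum_n x_n(k)$ will behave like centralized SGD for $f$. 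Part (i) then follows from the fast-timescale analysis and part (ii) from the slow-timescale ODE analysis combined with Assumption~\ref{a:CP-meas-zero}.

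\textbf{Step 1: almost-sure boundedness of iterates.} Before invoking any stochastic approximation theorem, I would establish that $\sup_k \|\vX(k)\| < \infty$ almost surely. Assumption~\ref{a:coercive-sorta} gives that, outside a large ball, each $-\nabla f_n(x_n)$ points inward and its magnitude grows at most linearly in $\|x_n\|$, while the Laplacian term is $\vX^\perp$-contractive and leaves the consensus component unchanged. A Lyapunov function of the form $V(\vX) = \|\vX\|^2$ together with the Robbins-Siegmund almost-supermartingale lemma (handling the zero-mean bounded-variance noise via Assumption~\ref{a:grab-bag}(3)) should give the required a.s.\ boundedness. This step is essential because the gradients are only \emph{locally} Lipschitz (Assumption~\ref{a:smooth-stuff}), so every subsequent estimate must be done on the event of boundedness.

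\textbf{Step 2: consensus (part (i)).} Projecting the recursion onto the orthogonal complement of $\myspan(\ones\otimes I_d)$ yields
\[
\vX^\perp(k+1) = (I - \beta_k \vL)\vX^\perp(k) - \alpha_k \,\Proj^\perp\!\bigl(\nabla F(\vX(k)) + \xi(k+1)\bigr).
\]
On the consensus-orthogonal subspace, $\vL$ has spectrum bounded below by $\lambda_2(L)>0$ (Assumption~\ref{a:grab-bag}(1)), so for $k$ large one has $\|I - \beta_k \vL\|_{\perp} \le 1 - c\beta_k$. Combined with $\|\nabla F(\vX(k))\|$ bounded (from Step~1) and $\alpha_k/\beta_k \to 0$, a standard Gronwall-type iteration plus a martingale argument for the stochastic term gives $\|\vX^\perp(k)\| = O(\alpha_k/\beta_k) \to 0$ a.s., which is exactly (i).

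\textbf{Step 3: ODE method on the average, and identification of the limit (part (ii)).} Averaging \eqref{dynamics_DT} over $n$ kills the Laplacian term (since $L\ones = 0$) and gives
\[
\bar x(k+1) = \bar x(k) - \tfrac{\alpha_k}{N}\,\nabla f(\bar x(k)) - \tfrac{\alpha_k}{N}\sum_{n=1}^N\bigl[\nabla f_n(x_n(k)) - \nabla f_n(\bar x(k))\bigr] - \tfrac{\alpha_k}{N}\sum_{n=1}^N \xi_n(k+1).
\]
By Step~2 and local Lipschitz continuity of $\nabla f_n$ (valid on the a.s.-bounded trajectory), the middle correction term is $o(\alpha_k)$. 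Thus $\bar x(k)$ is a stochastic approximation for the gradient flow $\dot x = -\tfrac{1}{N}\nabla f(x)$. Invoking the ODE method (e.g., Benaim) yields that the limit set of $\bar x(k)$ is internally chain-transitive for this flow; Assumption~\ref{a:CP-meas-zero} (noncritical values dense in $\R$) rules out chain-transitive sets other than subsets of $\CP_f$, since $f$ is a strict Lyapunov function off $\CP_f$. Combining with Step~2 gives $x_n(k) = \bar x(k) + o(1) \to \CP_f$, which is (ii).

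\textbf{Main obstacle.} I expect the hardest part to be Step~1/Step~2 in tandem: making the two-timescale consensus estimate rigorous when only \emph{local} Lipschitz continuity is available. One cannot appeal to a global Lipschitz constant, so boundedness must be proved \emph{before} the consensus rate can be established; yet the most convenient Lyapunov computations mix gradient and Laplacian terms, which requires carefully separating the consensus and disagreement components from the start. The stochastic terms in Step~2 also need a dedicated martingale tail estimate (rather than just a variance bound), since one needs $\vX^\perp(k)\to 0$ \emph{pathwise} to feed into Step~3.
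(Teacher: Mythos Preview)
Your three-step plan---a.s.\ boundedness via a supermartingale/Lyapunov argument on $\|\vX\|^2$, consensus via the fast Laplacian contraction with the gradient and noise treated as bounded perturbations, and then an ODE/stochastic-approximation argument on the network average using Assumption~\ref{a:CP-meas-zero} to rule out non-critical limit sets---matches the paper's proof essentially step for step; the paper merely carries it out in a general subspace-constrained framework (Section~\ref{sec:general-setup}) and, for Step~3, invokes the Davis--Drusvyatskiy--Kakade--Lee differential-inclusion theorem in place of Bena\"im, which in the smooth setting of Theorem~\ref{thrm:discrete-conv} amounts to the same thing. Your identification of the main obstacle (proving boundedness \emph{first}, with only local Lipschitz gradients, so that Step~2 can use a uniform gradient bound) is also exactly where the paper spends its effort (Lemmas~\ref{lemma:technical-inner-prod}--\ref{lemma:compact-set}).
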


We note that in this result and many of the following results, $x_n(k)$ converges to the \emph{set} of critical points. This is because the critical point set may contain a connected set. See Section \ref{sec:prelims} for the definition of setwise convergence. An extension of Theorem \ref{thrm:discrete-conv} for the case of nonsmooth loss functions will be given in Theorem \ref{nonsmooth-conv-to-cp}.

\subsection{Avoiding Saddle Points} \label{sec:avoiding-saddles-intro}
Next, we consider the issue of avoiding saddle points when loss functions are smooth. We say that $x^*\in \R^d$ is a saddle point of $f$ if $\nabla f(x^*)=0$ and $x^*$ is neither a local maximum or minimum of $f$. We will consider saddle points satisfying the following notion of regularity.
\begin{definition} [Regular Saddle Point] \label{def:strict saddle}
A saddle point $x^*$ of $f$ will be said to be \emph{regular} (or \emph{nondegenerate}) if the Hessian $\Hess f(x^*)$ is an invertible matrix.
\end{definition}
Note that this is equivalent to requiring all eigenvalues of $\nabla^2 f(x^*)$ to be nonzero.\footnote{The term \emph{nondegenerate} is commonly used for this concept in the optimization community. However, since we will deal with \emph{nonconvergence} to these points, we prefer to use the term ``regular'' in this paper to avoid frequent (and confusing) use of double negatives.}

To ensure D-SGD avoids saddle points, we will require the following mild technical assumption.  
\begin{assumption}[Differentiability of Eigenvectors] \label{a:eigvec-continuity}
Suppose $x^*\in\R^d$ is a saddle point of \eqref{eqn:f-def}.
For each $n$, the eigenvectors of $\Hess f_n(x)$ are differentiable at $x^*$ in the sense that, for each $x$ in a neighborhood of $x^*$, there exists an orthonormal matrix $U_n(x)$ that diagonalizes $\Hess f_n(x)$ such that $x\mapsto U_n(x)$ is differentiable in a neighborhood of $x^*$.
\end{assumption}
We emphasize that this assumption is relatively innocuous and should be satisfied by most functions encountered in practice. The assumption does not arise in centralized optimization but is needed to rule out certain highly pathological cases that can arise in the distributed setting, e.g., see Example 
19 in \citep{SMKP-TAC2020}. 
The assumption is, in fact, guaranteed to hold under more familiar (and less technical) conditions. For example, the assumption always holds when each eigenvalue of $\Hess f_n(x^*)$ is unique or if each $f_n$ is analytic \citep{katoBook}. However, we have chosen to state Assumption \ref{a:eigvec-continuity} in its present form in order to keep it as unrestrictive as possible. Further discussion of this assumption and and a simple illustrative example can be found in Section \ref{sec:assumptions-discussion}. Discussion of why the assumption is needed in the distributed setting can be found in Section \ref{sec:CT-stable-manifold} (see Remark \ref{remark:eigval-cont}).

In order to prevent D-SGD from getting trapped in ``bad'' sets (e.g., saddle points or stable manifolds), we must assume that the noise provides some minimum excitation. 
\begin{assumption}
\label{a:SGD}
For every unit vector $\theta \in \R^{d}$ and some constant $c_1>0$ we have
$$\E((\bar \xi(k)^\T \theta)^+\vert \calF_{k-1}) \geq c_1,
$$
where $\bar \xi(k):= \frac{1}{N}\sum_{n=1}^N \xi_n(k)$ and for $a\in\R$ we let $(a)^+:=\max\{0,a\}$.
\end{assumption}
Intuitively, Assumption \ref{a:SGD} simply asserts that the networked-average noise occasionally perturbs in all directions. The assumption is easily satisfied, for example, if each $\xi_n(k)$ is independently drawn from a nondegenerate (with positive definite covariance) Gaussian distribution or a uniform distribution over the $d$-dimensional unit sphere at each agent. 

Finally, we will assume a slightly stronger smoothness condition than required earlier for convergence to critical points (cf. Assumption \ref{a:smooth-stuff}).\footnote{In the optimization literature, smoothness often means that a function has a Lipschitz continuous gradient, as in Assumption \ref{a:smooth-stuff}. In the differential equations literature, the degree of smoothness refers to the number of times a function is continuously differentiable. We will use the term smoothness to refer to both concepts. However, note that if a function is more than twice continuously differentiable, it has a locally Lipschitz gradient, so Assumption \ref{a:C3} is stronger than Assumption \ref{a:smooth-stuff}.}
\begin{assumption} \label{a:C3}
$f_n:\R^d\to\R$ is three times continuously differentiable.
\end{assumption}

The next theorem gives our second main result. The theorem shows that the critical point reached by D-SGD cannot be a regular saddle point.

\begin{theorem}[Nonconvergence to Saddle Points]\label{thrm:DT-nonconvergence}
Suppose $\{(x_n(k))_{n=1}^N\}_{k\geq 1}$  is a D-SGD process \eqref{dynamics_DT}, $f$ is given by \eqref{eqn:f-def}, and $x^*$ is a regular saddle point of $f$. Suppose Assumptions \ref{a:coercive-sorta}--\ref{a:grab-bag} and \ref{a:eigvec-continuity}--\ref{a:C3} hold. 
Then, for each $n=1,\ldots,N$, and for any initialization $x_0 \in \R^{Nd}$
$$
\P(x_n(k)\to x^*) = 0.
$$
\end{theorem}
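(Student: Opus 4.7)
The plan is to reduce to a local analysis near the lifted saddle $\vx^* := (x^*,x^*,\ldots,x^*)\in\R^{Nd}$ and then adapt the classical stable-manifold / noise-excitation argument of Pemantle to the two-time-scale distributed dynamics. Theorem \ref{thrm:discrete-conv} already gives almost-sure convergence to the critical set of $f$, so it suffices to rule out convergence to $x^*$ in a small ball around $\vx^*$.

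First I would write \eqref{dynamics_DT} in stacked form on $\R^{Nd}$ as
$$
\vx(k+1) = \vx(k) - \alpha_k\bigl(\nabla F(\vx(k)) + \xi(k+1)\bigr) - \beta_k(L\otimes I_d)\,\vx(k),
$$
where $F(\vx):=\sum_n f_n(x_n)$ and $L$ is the graph Laplacian, noting that $\vx^*$ is a rest point of both the consensus and gradient fields. Decomposing $\vx = \vxavg + \vxperp$ relative to the consensus subspace $\ker(L\otimes I_d)$, the associated interpolated ODE separates into a slow (rate $\alpha_k$) gradient flow along $\vxavg$ that descends $\nabla f$ as in \eqref{eq:mean-D-SGD}, and a fast (rate $\beta_k$) linear contraction along $\vxperp$ driven by the spectral gap of $L$ on the non-consensus subspace. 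The rate separation $\tau_\beta<\tau_\alpha$ from Assumption \ref{a:grab-bag} is precisely what places these on different time scales.

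Next I would linearize around $\vx^*$. The Hessian $\Hess F(\vx^*)$ is block-diagonal with blocks $\Hess f_n(x^*)$; Assumption \ref{a:eigvec-continuity} together with Assumption \ref{a:C3} supplies enough regularity to diagonalize these blocks consistently in a neighborhood of $x^*$. On the consensus subspace the linearized gradient term is effectively driven by $\Hess f(x^*)=\sum_n \Hess f_n(x^*)$, which is invertible (regular saddle) and has at least one strictly negative eigenvalue. Combined with the fast contraction on the non-consensus subspace, the linearized joint operator therefore admits a spectral split with at least one strictly expanding direction, and the contraction rates on the complementary directions can be quantified uniformly using the spectral gap of $L$ and the ordering $\tau_\beta<\tau_\alpha$.

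Using this split, I would construct a local non-autonomous stable manifold $M^s$ at $\vx^*$ of codimension at least one, adapted to the time-varying step sizes. This is the central obstacle: the linearization is genuinely non-autonomous, so the classical (autonomous) stable-manifold theorem underlying \citep{pemantle1990nonconv,lee2016gradient} does not apply directly; one must build $M^s$ as a graph over the stable subspace with estimates that survive the decaying, two-scale weights. I expect this to be the hardest step and precisely the technical novelty the introduction advertises. With $M^s$ in hand, the final step is a Pemantle-style recurrence / Borel--Cantelli argument: on the event $\{\vx(k)\to\vx^*\}$ the iterates eventually lie in an arbitrarily small neighborhood of $\vx^*$, but Assumption \ref{a:SGD} guarantees that $\bar\xi(k+1)$ has uniformly positive conditional expectation along the unstable direction of $M^s$, so a positive fraction of innovations drive the iterate off $M^s$ along that direction. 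This forces escape with probability one and yields $\P(x_n(k)\to x^*)=0$ for every agent $n$.
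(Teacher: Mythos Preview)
Your proposal is correct and follows essentially the same strategy as the paper: cast D-SGD as a subspace-constrained stochastic approximation, build a non-autonomous stable manifold for the underlying (continuous-time) DGF near the lifted saddle, and then run a Pemantle-style instability argument using the noise excitation from Assumption~\ref{a:SGD}. The paper's implementation differs only in packaging---it abstracts to a general quadratic-penalty framework (Section~\ref{sec:general-setup}), constructs the stable manifold for the continuous ODE \eqref{eq:ODE1} (via the companion paper and Lemma~\ref{lemma:manifold-C2}), proves a repulsion inequality for the discretization (Proposition~\ref{prop:eta-properties}), and replaces your Borel--Cantelli sketch with the two-lemma stopping-time structure (Lemmas~\ref{lemma:wander-far}--\ref{lemma:doesnt-return}).
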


Finally, as an immediate consequence of Theorems \ref{thrm:discrete-conv} and \ref{thrm:DT-nonconvergence}, we obtain the following local-minimum convergence guarantee. 
\begin{theorem} [Convergence to Local Minima] \label{thrm:conv-to-mins}
  Suppose $\{(x_n(k))_{n=1}^N\}_{k\geq 1}$ is a D-SGD process \eqref{dynamics_DT} with $f$ given by \eqref{eqn:f-def}.
  Suppose Assumptions  
  \ref{a:coercive-sorta}--\ref{a:C3}, 
  hold 
  and that every saddle point of $f$ is regular. Then for each $n=1,\ldots,N$, and for any initial condition, $\lim_{k\to\infty} \|x_n(k) - x_\ell(k) \| = 0$ for all $\ell$, and $x_n(k)$ converges to the set of local minima of $f$ with probability 1.
\end{theorem}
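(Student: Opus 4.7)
The strategy is to combine Theorems \ref{thrm:discrete-conv} and \ref{thrm:DT-nonconvergence} via a covering argument. By Theorem \ref{thrm:discrete-conv}, with probability one each $x_n(k)$ converges to $\CP_f$, so it remains only to rule out accumulation at non-minimum critical points.

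The key geometric observation is that every critical point with invertible Hessian is isolated: applying the implicit function theorem to $\nabla f = 0$ at such a point yields a neighborhood in which the point is the only zero of $\nabla f$. Under the hypothesis that every saddle point of $f$ is regular, the set $S$ of non-minimum critical points---consisting of regular saddles and, by the same invertibility reasoning, any strict local maxima---is therefore discrete, and hence at most countable. Enumerate $S = \{x^*_j\}_{j \in J}$. For each regular saddle $x^*_j$, Theorem \ref{thrm:DT-nonconvergence} provides $\P(x_n(k) \to x^*_j) = 0$; the same stable-manifold argument underlying Theorem \ref{thrm:DT-nonconvergence} should extend verbatim to any strict local maximum, since its hypotheses really only require that $\Hess f(x^*)$ possess at least one negative eigenvalue---a condition a local maximum satisfies trivially (indeed, maximally). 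A countable union bound over $j \in J$ then yields $\P\bigl(x_n(k) \to x^*_j \text{ for some } j \in J\bigr) = 0$.

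To complete the proof I would invoke connectedness of the $\omega$-limit set. Under the boundedness ensured by Assumption \ref{a:coercive-sorta} together with the stochastic-approximation/ODE framework used to establish Theorem \ref{thrm:discrete-conv}, the $\omega$-limit set of $\{x_n(k)\}$ is (a.s.) a connected compact subset of $\CP_f$. Since each $x^*_j \in S$ is isolated in $\CP_f$, this connected limit set is either a singleton equal to some $x^*_j$ (an event of probability zero by the preceding step) or it is entirely contained in the complementary set of local minima. In either case, $\dist(x_n(k), \text{local minima of } f) \to 0$ almost surely, as claimed.

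The main subtlety is not any new estimate but the bookkeeping between pointwise and set-level statements: regularity must be used to pass from the potentially uncountable zero-set of $\nabla f$ to a countable, discrete collection of non-minima so that a union bound is legitimate, and connectedness of the limit set must be invoked to upgrade pointwise nonconvergence to each $x^*_j$ into convergence to the local-minimum set. An alternative, avoiding the connectedness argument, would be to strengthen Theorem \ref{thrm:DT-nonconvergence} to nonconvergence to a neighborhood of $x^*_j$ and cover $S$ by disjoint such neighborhoods; but leveraging connectedness keeps the argument short and directly reduces the theorem to what has already been proved.
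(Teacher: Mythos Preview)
The paper does not prove this theorem; it simply asserts that the result is ``an immediate consequence of Theorems \ref{thrm:discrete-conv} and \ref{thrm:DT-nonconvergence}.'' Your argument is exactly the natural unpacking of that claim: critical points with invertible Hessian are isolated by the inverse function theorem, so the non-minimum critical points form a countable set; a union bound over Theorem \ref{thrm:DT-nonconvergence} then gives probability-zero convergence to any one of them; and connectedness of the $\omega$-limit set (standard in the stochastic-approximation framework underlying Theorem \ref{thrm:discrete-conv}) upgrades pointwise nonconvergence to set-level convergence to the local minima. Your treatment is thus more careful than the paper's own, while following the intended route.

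One subtlety you flag but do not fully close: the hypothesis requires only that every \emph{saddle} be regular, with nothing assumed about local maxima. You cover strict (negative-definite Hessian) maxima by noting, correctly, that the stable-manifold mechanism behind Theorem \ref{thrm:DT-nonconvergence} really needs only one negative Hessian eigenvalue rather than the point being a saddle per se. But a local maximum with singular negative-semidefinite Hessian is neither a regular saddle nor covered by that extension, and the stated hypotheses do not exclude such points. The paper is equally silent on this, so it is a gap in the theorem as formulated rather than a defect of your argument relative to the paper's.
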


An extension of Theorems \ref{thrm:DT-nonconvergence} and \ref{thrm:conv-to-mins} to the case of nonsmooth loss functions will be given in Theorems \ref{nonsmooth-conv-to-cp}--\ref{thrm:DT-nonconvergence-gen}.

\subsection{Extension: Nonsmooth Loss Functions} \label{sec:nonsmooth-extension}

In the previous two sections we focused on smooth loss functions. However, nonsmooth loss functions play a critical role in many machine learning applications, e.g., deep learning with ReLU activation functions or $\ell_1$ regularization. In this section we will obtain the following generalizations of the above results:
\begin{enumerate}
    \item [1.] If $f_n$ is nonsmooth, then D-SGD still converges to critical points. 
    \item [2.] If $f_n$ is nonsmooth in general, but is smooth in a \emph{neighborhood} of a saddle point, then we still obtain nonconvergence to the saddle point.
\end{enumerate} 

We will formalize these generalizations now. 
Instead of Assumption \ref{a:smooth-stuff}, consider the following assumption. 
\begin{assumption} \label{a:f_n-Lipschitz}
$f_n$ is locally Lipschitz continuous.
\end{assumption}
Under this assumption it is a well-known consequence of Rademacher's theorem that $f_n$ is differentiable almost everywhere \citep[Ch. 3]{evans2015measure}. Thus one may define the following  generalization of the gradient for such functions. 
\begin{definition}[Generalized Gradient] \label{def:gen-grad}
Given a locally Lipschitz continuous function $h$, the generalized gradient of $h$ is given by
\begin{equation}
    \partial h(x) := \textup{co}\left\{\lim_{i\to\infty} \nabla h(x_i): \, x_i\to x, ~ \nabla h(x_i) \textup{ exists}\right\},
\end{equation}
where co denotes the convex hull. 
\end{definition}
Properties of the generalized gradient and further discussion of this definition can be found in \citep{clarke1990optimization}. Of particular note, if $h$ is locally Lipschitz continuous, then $\partial h(x)$ is a nonempty compact convex set for all $x$. If $h$ is continuously differentiable, then $\partial h(x)$ coincides with the traditional gradient and if $h$ is convex, then $\partial h(x)$ simply gives the subgradient set. 

Given Definition \ref{def:gen-grad}, a point $x^*$ is said to be a critical point of $f$ if $0\in \partial f(x^*)$ and $x^*$ will be called a saddle point if it is a critical point but not a local maximum or minimum. 

We will make the following direct generalization of Assumption \ref{a:coercive-sorta} for nonsmooth functions.
\begin{assumption} \label{a:coercive-sorta-nonsmooth}
There exists a radius $R>0$ and constants $C_1,C_2>0$ such that
\begin{equation} \label{eq:assump-grad-alignment}
\left\langle \frac{v}{\|v\|},\frac{x}{\|x\|} \right\rangle \geq C_1\quad \mbox{ and } \quad \| v\|\leq C_2\|x\|
\end{equation}
for all $v\in \partial f_n(x)$ and $\|x\|\geq R$. 
\end{assumption}

In this context, D-SGD is given by the recursion
\begin{equation} \label{dynamics_DT-DI}
x_n(k+1) = x_n(k) + \beta_k\sum_{\ell\in \Omega_n}\big(x_\ell(k) - x_n(k)\big) - \alpha_k\big(v(k) + \xi_n(k+1)\big),
\end{equation}
where $v(k)\in \partial f_n(x_n(k))$. 

Finally, in order to ensure that D-SGD will descend the objective function, we require the following technical assumption. 
\begin{assumption}[Chain rule] \label{a:chain-rule}
For any absolutely continuous function $\vx:[0,\infty)\to \R^{d}$,
$f$ satisfies the chain rule
$$
\ddt f(\vx(t)) = \left\langle v, \ddt \vx(t) \right\rangle, 
$$
for some $v\in \partial f(\vx(t))$, and almost all $t\geq 0$. 
\end{assumption}
This assumption is a technical regularity condition needed to avoid pathological cases---we expect it to be satisfied by most functions encountered in practice. Further intuition for why the assumption is needed can be found in \citep{daniilidis2020pathological} (see also Remark \ref{remark:chain-rule} below).
In particular, the assumption is guaranteed to hold for a wide range of functions including common nonsmooth neural network architectures \citep[Sec. 5]{davis2018stochastic}. 

Under these relaxed assumptions we obtain convergence to critical points of nonsmooth loss functions. The following result generalizes Theorem  \ref{thrm:discrete-conv}.
\begin{theorem}[Convergence to Critical Points] \label{nonsmooth-conv-to-cp}
Suppose $\{(x_n(k))_{n=1}^N\}_{k\geq 1}$  satisfies \eqref{dynamics_DT-DI} with $f$ given by \eqref{eqn:f-def}.
Suppose Assumptions \ref{a:grab-bag}--\ref{a:CP-meas-zero} and \ref{a:f_n-Lipschitz}--\ref{a:chain-rule} hold. 
Then for each agent $n$ we have that: (i) agents reach consensus in the sense that $\|x_n(k) - x_\ell(k)\|\to 0$ for each $\ell=1,\ldots,N$ and (ii) $x_n(k)$ converges to the set of critical points of $f$. 
\end{theorem}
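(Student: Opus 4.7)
The plan is to adapt the proof of Theorem~\ref{thrm:discrete-conv} by replacing the classical ODE method with stochastic approximation for differential inclusions (in the Benaim--Hofbauer--Sorin framework), since the iteration~\eqref{dynamics_DT-DI} now advances along an arbitrary measurable selection $v(k)\in\partial f_n(x_n(k))$ rather than a single-valued gradient. As before, I would decompose the stacked state into the network mean $\bar x(k) := \tfrac{1}{N}\sum_n x_n(k)$ and the consensus residual $\vxperp(k)$ with components $x_n(k)-\bar x(k)$, and then handle stability, consensus, and mean dynamics in turn.

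First I would show almost sure boundedness of $\{x_n(k)\}$. Assumption~\ref{a:coercive-sorta-nonsmooth} applies uniformly to \emph{every} $v\in\partial f_n(x)$, so the smooth-case Robbins--Siegmund-type supermartingale argument on $\|\mathbf{x}(k)\|^2$ transfers almost verbatim once each occurrence of $\nabla f_n$ in the smooth proof is read as an arbitrary subgradient selection; the inward component of the gradient step dominates the $\Theta(\alpha_k^2)$ noise contribution by Assumption~\ref{a:grab-bag}. Next, for consensus, stacking the iterates yields a recursion of the form
\[
\vxperp(k+1) \;=\; (I - \beta_k L_\perp)\,\vxperp(k) \;+\; \alpha_k\,r(k),
\]
where $L_\perp$ is the graph Laplacian restricted to the consensus-orthogonal subspace (positive definite by Assumption~\ref{a:grab-bag}(1)) and $r(k)$ collects bounded subgradient and noise terms. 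Since $\alpha_k/\beta_k = \Theta(k^{\tau_\beta-\tau_\alpha})\to 0$, the contraction along $L_\perp$ dominates and $\|\vxperp(k)\|\to 0$ a.s., just as in the smooth case and independently of any differentiability of $f_n$.

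Averaging~\eqref{dynamics_DT-DI} annihilates the consensus term and gives
\[
\bar x(k+1) \;=\; \bar x(k) \;-\; \frac{\alpha_k}{N}\sum_{n=1}^N \bigl(v_n(k) + \xi_n(k+1)\bigr),\qquad v_n(k)\in\partial f_n(x_n(k)).
\]
Because each $\partial f_n$ is upper semicontinuous with nonempty compact convex values (Clarke subdifferential of a locally Lipschitz function) and $x_n(k)-\bar x(k)\to 0$ by the consensus step, I would verify the Benaim--Hofbauer--Sorin hypotheses and conclude that $\{\bar x(k)\}$ is an asymptotic pseudotrajectory of the differential inclusion
\[
\dot z(t) \;\in\; -\frac{1}{N}\sum_{n=1}^N \partial f_n(z(t)).
\]
The martingale noise is handled via the square-summability implied by Assumption~\ref{a:grab-bag}(2)--(3), and boundedness from Step~1 keeps trajectories precompact. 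It then follows that the limit set of $\bar x(k)$ is internally chain transitive for the inclusion.

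The final step, which I expect to be the main obstacle, is identifying internally chain-transitive sets with $\CP_f$. The right-hand side of the inclusion is driven by $\sum_n \partial f_n(z)$, which in general only \emph{contains} $\partial f(z)$ rather than coinciding with it, so critical points of the inclusion need not automatically be critical points of $f$. This is precisely what Assumption~\ref{a:chain-rule} is designed to fix: along any absolutely continuous solution $z(\cdot)$ of the inclusion, the chain rule produces $\tfrac{d}{dt} f(z(t)) = \langle v(t),\dot z(t)\rangle$ for $v(t)\in\partial f(z(t))$, from which one extracts strict descent whenever $0\notin \partial f(z(t))$, so that $f$ is a Lyapunov function for the inclusion. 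Combined with Assumption~\ref{a:CP-meas-zero}, which rules out internally chain-transitive sets spanning distinct critical values, this forces the limit set of $\bar x(k)$ to lie inside $\CP_f$. Together with the consensus step, this delivers $x_n(k)\to \CP_f$ for every $n$, proving both (i) and (ii). All other ingredients---stability, consensus, and the pseudotrajectory property---require only cosmetic modifications of the smooth proof, whereas the Lyapunov step genuinely relies on the chain-rule hypothesis and on carefully transferring information from $\sum_n \partial f_n$ back to $\partial f$.
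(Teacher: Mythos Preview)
Your approach follows the same architecture as the paper's: almost-sure boundedness via a supermartingale argument, consensus via the $\alpha_k/\beta_k\to 0$ contraction, and then a differential-inclusion stochastic-approximation argument for the averaged iterate. The paper recasts everything in the general subspace-constrained framework of Section~\ref{sec:general-setup} with $h(x)=\sum_n f_n(x_n)$ and $Q=L\otimes I_d$, and invokes the Davis--Drusvyatskiy result (Theorem~\ref{thrm:tame-functions}) rather than Bena\"im--Hofbauer--Sorin directly; these are equivalent packagings. Boundedness and consensus correspond to Lemmas~\ref{lemma:compact-set} and~\ref{lemma:DT-conv-to-C}, and the mean dynamics to Lemma~\ref{lemma:dt-conv-to-cp-h}.

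The one substantive difference is in the step you correctly flag as the main obstacle. You set the mean-field map to $-\tfrac1N\sum_n\partial f_n$, which makes the pseudotrajectory verification trivial, and then argue that $f$ is Lyapunov for this inclusion via Assumption~\ref{a:chain-rule}. That inference does not go through as stated: the chain rule gives $\tfrac{d}{dt}f(z(t))=\langle v(t),\dot z(t)\rangle$ for \emph{some} $v(t)\in\partial f(z(t))$, while $\dot z(t)=-\tfrac1N w(t)$ for some $w(t)\in\sum_n\partial f_n(z(t))$; nothing forces $\langle v(t),w(t)\rangle\ge 0$, and in particular if $0\in\sum_n\partial f_n(z)$ but $0\notin\partial f(z)$ then the constant trajectory is a solution of your inclusion with no descent. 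The paper's route around the same mismatch is to place the mean-field map at the \emph{smaller} set, taking $G=\partial h\vert_\C$ (i.e.\ $\partial f$ in the D-SGD reduction). The Lyapunov condition (Condition~6 of Theorem~\ref{thrm:tame-functions}) is then immediate from the chain rule for $h\vert_\C$, and the burden shifts to the tracking condition (Condition~4), which the paper discharges via upper semicontinuity and convexity of the full-space $\partial h$ together with $x_{nc}(k)\to 0$. Both routes ultimately confront the gap between $\sum_n\partial f_n$ and $\partial f$; your allocation leaves the Lyapunov step unclosed without either adopting the paper's choice of $G$ or adding a regularity hypothesis (e.g.\ Clarke regularity of each $f_n$) that forces the two sets to coincide.
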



The next theorem generalizes Theorem \ref{thrm:DT-nonconvergence}.
Critically, note that in the theorem we only assume smoothness in a \emph{neighborhood} of $x^*$.
\begin{theorem} [Nonconvergence to Saddle Points] \label{thrm:DT-nonconvergence-gen}
  Suppose $\{(x_n(k))_{n=1}^N\}_{k\geq 1}$  satisfies \eqref{dynamics_DT-DI} with $f$ given by \eqref{eqn:f-def}. 
  Suppose Assumptions \ref{a:grab-bag}, \ref{a:eigvec-continuity}--\ref{a:SGD}, and \ref{a:f_n-Lipschitz}--\ref{a:chain-rule} hold. Suppose $x^*$ is a regular saddle point and there exists some neighborhood of $x^*$ on which Assumption \ref{a:C3} holds. Then for each $n=1,\ldots,N$ and for any initialization $x_0\in \R^{Nd}$ we have
  $$
  \mathbb{P}\left(x_n(k)\to x^* \right) = 0.
  $$
\end{theorem}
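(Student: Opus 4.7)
The plan is to reduce Theorem \ref{thrm:DT-nonconvergence-gen} to the smooth case, Theorem \ref{thrm:DT-nonconvergence}, by exploiting the following simple observation: if $x_n(k)\to x^*$, the iterates must eventually enter and remain in the neighborhood $U$ on which Assumption \ref{a:C3} holds. On such an event, the nonsmooth recursion \eqref{dynamics_DT-DI} becomes indistinguishable from the smooth recursion \eqref{dynamics_DT}, because $\partial f_n=\{\nabla f_n\}$ wherever $f_n$ is $C^1$.

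First, I would pick $\rho>0$ with $\bar B_{2\rho}(x^*)\subset U$ and invoke Theorem \ref{nonsmooth-conv-to-cp}, whose hypotheses are satisfied here, to obtain consensus almost surely. Consequently, on the event $A:=\{x_n(k)\to x^*\}$ there is a random, almost surely finite first time $K$ beyond which every agent's iterate $x_\ell(k)$ lies in $B_\rho(x^*)$. Decomposing $A=\bigcup_{k_0\geq 1}A_{k_0}$ with $A_{k_0}:=\{K=k_0\}$, it suffices to show $\P(A_{k_0})=0$ for each $k_0$.

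Next, I would construct a smooth surrogate via a standard cutoff: for a bump $\chi\in C^\infty_c(U)$ with $\chi\equiv 1$ on $\bar B_\rho(x^*)$, set
$$
\tilde f_n(x):=\chi(x)\,f_n(x)+\bigl(1-\chi(x)\bigr)\cdot\tfrac{1}{2}\|x-x^*\|^2.
$$
Then $\tilde f_n$ is globally $C^3$, agrees with $f_n$ (including Hessians) on $B_\rho(x^*)$ so that $x^*$ remains a regular saddle of $\tilde f:=\sum_n\tilde f_n$, and has quadratic-tail gradient; this delivers Assumptions \ref{a:smooth-stuff}, \ref{a:coercive-sorta}, \ref{a:C3}, and \ref{a:eigvec-continuity} globally for $\tilde f_n$. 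Let $\{\tilde x_n(k)\}_{k\geq k_0}$ denote the smooth D-SGD process \eqref{dynamics_DT} for $\tilde f_n$, initialized at $\tilde x_n(k_0):=x_n(k_0)$ and driven by the same noise $\xi_n(k)$. A direct induction shows that on $A_{k_0}$, $\tilde x_n(k)=x_n(k)\in B_\rho(x^*)$ for every $k\geq k_0$.

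Finally, I would apply Theorem \ref{thrm:DT-nonconvergence} to the smooth process $\{\tilde x_n(k)\}$, conditioning on $\calF_{k_0}$ and observing that the step-size tail conditions in Assumption \ref{a:grab-bag} and the noise excitation of Assumption \ref{a:SGD} are invariant under a deterministic time shift. This yields $\P(\tilde x_n(k)\to x^*\mid\calF_{k_0})=0$, and the coupling $A_{k_0}\subseteq\{\tilde x_n(k)\to x^*\}$ then gives $\P(A_{k_0})=0$; summing over $k_0$ concludes the proof. The main obstacle is likely the bookkeeping for this conditional application of Theorem \ref{thrm:DT-nonconvergence} at a random time $k_0$ with random initialization $x_n(k_0)\in B_\rho(x^*)$: the smooth theorem is stated for deterministic initial data, so one must verify that the underlying stochastic-approximation and stable-manifold arguments depend only on asymptotic tail behavior and therefore apply uniformly across admissible initializations, with no spurious loss of measurability or independence.
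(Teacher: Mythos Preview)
Your reduction is sound in spirit but differs from the paper's route, and there is one small gap worth flagging.

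\textbf{Comparison with the paper.} The paper does \emph{not} reduce Theorem~\ref{thrm:DT-nonconvergence-gen} to the globally smooth Theorem~\ref{thrm:DT-nonconvergence}. Instead it proves a single general result, Theorem~\ref{thrm:saddle-instability}, in the subspace-constrained framework under the assumptions that $h$ is merely locally Lipschitz (Assumption~\ref{a:h-loc-lip}) but $C^3$ in a neighborhood of $x^*$ (Assumption~\ref{a:h-C3}); both Theorem~\ref{thrm:DT-nonconvergence} and Theorem~\ref{thrm:DT-nonconvergence-gen} are then specializations (Remark~\ref{remark:Thrm-nonconvergence}). The localization to a neighborhood of $x^*$ happens inside the stable-manifold and stochastic-approximation analysis itself (see the footnote at the start of the proof of Lemma~\ref{lemma:wander-far}, which invokes Pemantle's coupling argument to dispense with the global boundedness assumption). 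Your approach is a more modular black-box reduction: accept Theorem~\ref{thrm:DT-nonconvergence} as given, build a global $C^3$ surrogate $\tilde f_n$ via a cutoff, and couple. This buys you a proof that avoids reopening the stable-manifold machinery, at the cost of the conditioning-at-a-random-time bookkeeping you already flagged; the paper's approach buys a single proof covering both regularity regimes at once.

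\textbf{A small gap.} You write that the hypotheses of Theorem~\ref{nonsmooth-conv-to-cp} ``are satisfied here,'' but Theorem~\ref{thrm:DT-nonconvergence-gen} does \emph{not} assume~\ref{a:CP-meas-zero}, whereas Theorem~\ref{nonsmooth-conv-to-cp} does. Fortunately you only need part~(i), consensus, and that part follows from Lemma~\ref{lemma:DT-conv-to-C} (which requires only Assumptions~\ref{a:h-loc-lip}--\ref{a:coercive-h} and \ref{a:Q-PSD1}--\ref{a:noise2}, i.e.\ no analogue of~\ref{a:CP-meas-zero}). So the fix is simply to cite the consensus lemma directly rather than the full critical-point theorem. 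Your self-identified obstacle---applying Theorem~\ref{thrm:DT-nonconvergence} from an $\calF_{k_0}$-measurable initialization with time-shifted weights---is real but routine: condition on $\calF_{k_0}$, observe that the noise assumptions~\ref{a:grab-bag}(3) and~\ref{a:SGD} are stated conditionally on $\calF_{k-1}$ and are therefore inherited, and note that the conclusion $\P(\tilde x_n(k)\to x^*\mid\calF_{k_0})=0$ a.s.\ integrates to the unconditional statement.
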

Finally, combining Theorems \ref{nonsmooth-conv-to-cp} and \ref{thrm:DT-nonconvergence-gen} we immediately obtain the following result, generalizing Theorem \ref{thrm:conv-to-mins}.
\begin{theorem} [Convergence to Local Minima] \label{thrm:conv-to-mins-DI}
  Suppose $\{(x_n(k))_{n=1}^N\}_{k\geq 1}$ satisfies \eqref{dynamics_DT-DI} with $f$ given by \eqref{eqn:f-def}.
  Suppose Assumptions  
  \ref{a:grab-bag}--\ref{a:SGD}, and \ref{a:f_n-Lipschitz}--\ref{a:chain-rule}
  hold.
  Suppose that for every saddle point $x^*$, $x^*$ is regular and \ref{a:C3} holds in some neighborhood of $x^*$.
  Then for each $n=1,\ldots,N$, and for any initial condition $x_0\in \R^{Nd}$, $\lim_{k\to\infty} \|x_n(k) - x_\ell(k)\| = 0$ for each $\ell$, and $x_n(k)$ converges to the set of local minima of $f$ with probability 1.
\end{theorem}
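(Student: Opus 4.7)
The plan is to combine Theorem \ref{nonsmooth-conv-to-cp} with Theorem \ref{thrm:DT-nonconvergence-gen}: the former drives the trajectory into the critical set, and the latter excludes any individual regular saddle from being its limit. First, I would invoke Theorem \ref{nonsmooth-conv-to-cp} (whose hypotheses are all implied by those of the present theorem) to conclude that, with probability one, $\dist(x_n(k),\CP_f)\to 0$. Decompose $\CP_f=\calM_{\min}\cup\calM_{\max}\cup\calS$, where $\calS$ is the set of saddles in the paper's sense; by hypothesis every element of $\calS$ is regular, i.e., $\Hess f(x^*)$ is invertible.

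Next, I exploit regularity to obtain isolation. Since $f$ is $C^3$ in a neighborhood of each $x^*\in\calS$ and $\Hess f(x^*)$ is invertible, the inverse function theorem applied to $\nabla f$ implies $x^*$ is an isolated zero of $\nabla f$, and hence an isolated critical point. Together with the a.s.\ boundedness of the trajectory (which follows from the asymptotic inward-pointing estimate of Assumption \ref{a:coercive-sorta-nonsmooth}), the relevant saddles form an at most countable set $\calS_b$. In parallel, $\alpha_k\to 0$ forces $\|x_n(k+1)-x_n(k)\|\to 0$ a.s., and a standard stochastic-approximation argument then yields that the limit set of $\{x_n(k)\}$ is closed and connected. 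Being contained in $\CP_f$ and intersecting a singleton component of $\calS$ would force convergence to that singleton. Combining with Theorem \ref{thrm:DT-nonconvergence-gen} and a countable union,
\[
\P\bigl(x_n(k)\to\calS\bigr)\leq \sum_{x^*\in\calS_b}\P\bigl(x_n(k)\to x^*\bigr)=0.
\]

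The main subtlety I anticipate concerns local maxima: the paper's definition of saddle point explicitly excludes them, so the assumption ``every saddle point is regular'' does not, strictly speaking, constrain them. I would address this by noting that the stable-manifold argument underlying Theorem \ref{thrm:DT-nonconvergence-gen} goes through essentially verbatim for any critical point whose Hessian has at least one negative eigenvalue, which covers every nondegenerate local maximum (whose stable manifold is in fact trivial). An almost verbatim repetition of the proof of Theorem \ref{thrm:DT-nonconvergence-gen} then gives $\P(x_n(k)\to x^*)=0$ for each nondegenerate $x^*\in\calM_{\max}$, and combining with the previous step yields convergence to $\calM_{\min}$ with probability one. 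Cleanly formalizing the connectedness of the limit set in the present nonsmooth setting, and excluding any possible degenerate local maxima (via a mild extra regularity assumption or by exploiting the descent property implied by Assumption \ref{a:chain-rule}), are the only remaining technicalities.
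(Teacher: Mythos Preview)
Your approach matches the paper's, but the paper is far terser: it simply asserts that the result follows ``immediately'' by combining Theorems \ref{nonsmooth-conv-to-cp} and \ref{thrm:DT-nonconvergence-gen}, and gives no further argument. Your proposal supplies the bridge the paper omits---isolation of regular saddles via the inverse function theorem applied to $\nabla f$, countability of the saddles within the (a.s.\ bounded) range of the trajectory, connectedness of the limit set so that approaching an isolated saddle forces convergence to it, and then a countable union bound over Theorem \ref{thrm:DT-nonconvergence-gen}. That is exactly the right way to make the paper's one-line claim rigorous.

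Your concern about local maxima is a genuine issue that the paper itself glosses over: under the paper's Definition of saddle point, local maxima are excluded, so the hypothesis ``every saddle point is regular'' says nothing about them. The paper does not address this. Your proposed fix---observing that the nonconvergence machinery of Theorem \ref{thrm:DT-nonconvergence-gen} applies verbatim to any critical point at which $\Hess f$ has a negative eigenvalue, hence in particular to any nondegenerate local maximum---is correct and is the natural repair. Degenerate local maxima would require either an additional hypothesis or a separate descent-based argument, as you note; the paper implicitly assumes they do not arise.
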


\subsection{Discussion} \label{sec:intro-discussion}

The main novelty of these results is that they show (i) convergence to critical points of D-SGD under  weak nonsmoothness assumptions and (ii) nonconvergence to saddle points. Regarding (i), 
convergence to critical points is shown under the assumption of Lipschitz continuous loss functions (Assumption \ref{a:f_n-Lipschitz}), which is essentially the weakest assumption one can make while still ensuring the generalized gradient (the analog of the subgradient for nonconvex functions) still exists. Additional technical assumptions are made, but as discussed in Section \ref{sec:assumptions-discussion}, these are all fairly mild. 
From a practical standpoint, our result showing convergence to critical points (Theorem \ref{nonsmooth-conv-to-cp}) applies to most nonsmooth neural network architectures used in practice today, cf. \citep{davis2018stochastic}.

Regarding (ii), most previous work on D-SGD has focused on showing convergence to critical points under various assumptions but has not dealt with nonconvergence to saddle points (see Section \ref{sec:related-work} for a discussion of related work). Analyzing nonconvergence to saddle points is challenging as it requires dealing with tools that are not standard in optimization. In particular, one typically deals with stable manifolds---a concept from classical dynamical systems theory \citep{coddington1955theory}. Stable manifolds have been used to study gradient dynamics in the centralized regime and show nonconvergence to saddle points \citep{lee2016gradient,pemantle1990nonconv}. 
However, showing nonconvergence of D-SGD to saddle points is more challenging than in the centralized case because the classical theory for stable manifolds does not apply. This is because the scaling factors $\alpha_k$ and $\beta_k$ in \eqref{dynamics_DT} make the dynamics nonautonomous (meaning the right hand side depends on time). The nonautonomous nature of the dynamics is an intrinsic part of their operation---by changing the relative weight placed on the consensus vs gradient descent terms over time, the consensus term gradually ``overpowers'' the gradient descent term, ensuring that consensus is reached, but at a point which also optimizes the loss function. Classical stable manifold theory, which is traditionally applied to show nonconvergence to saddle points, does not apply to nonautonomous systems.

To our knowledge, no analogous theory for stable manifolds exists for generic nonautonomous systems, which necessitates much of the analysis in the present paper. In particular, the backbone of our proof is a novel stable manifold theorem for (continuous-time) distributed gradient flow (DGF) developed in \citep{SMKP-TAC2020}. 



Our analysis, both for showing convergence to critical points and nonconvergence to saddle points, will rely on studying the ordinary differential equation (ODE) that underlies D-SGD.
ODE-based methods for studying optimization dynamics have grown in popularity recently \citep{su2014differential,krichene2015accelerated,wibisono2016variational}.  These powerful techniques often allow for much simpler analysis and provide deep insights by characterizing the qualitative properties of the underlying ODE.
A high level discussion of proof techniques and our use of ODE-based methods will be given in Section \ref{sec:high-level-pf-discussion} and a more detailed roadmap of the proof strategy will be given in Section \ref{sec:pf-strategy}. 

It is worth noting that the problem of minimizing \eqref{eqn:f-def} in a distributed setting can be viewed as a subspace-constrained optimization problem (see Section \ref{sec:general-setup}).
Subspace-constrained optimization problems have many applications in engineering including distributed optimization and inference \citep{hajinezhad2019perturbed,nassif2019adaptation-I,nassif2019adaptation-II}. While this paper is motivated by the application of distributed nonconvex optimization and our main results will be stated in this context, all of our results are in fact proved for the broader class of subspace-constrained optimization problems. See Sections \ref{sec:pf-strategy} and \ref{sec:general-setup} for more details.

\subsection{Discussion of Assumptions} \label{sec:assumptions-discussion}

In order to keep the presentation of the main results concise, we have deferred an in-depth discussion of assumptions to this section. 
For convenience, a table summarizing the assumptions and their usage in the main results can be found in Figures \ref{fig:assumptions-summary} and \ref{fig:results-summary}.

We will begin by discussing assumptions related to convergence to critical points. Theorem \ref{nonsmooth-conv-to-cp} is a strict generalization of Theorem \ref{thrm:discrete-conv}, hence we will focus on the assumptions made in that theorem. Assumption \ref{a:f_n-Lipschitz} is a standard notion of continuity. (Note that it is  weaker than the standard smoothness assumption \ref{a:smooth-stuff} used in Theorem \ref{thrm:discrete-conv}.) Assumption \ref{a:f_n-Lipschitz} is essentially the weakest assumption under which the generalized gradient can be ensured to exist, and hence under which D-SGD dynamics are well defined. For a discussion of other notions of nonsmoothness, see \citep{li2020understanding}.

Assumption \ref{a:coercive-sorta-nonsmooth} (the generalization of \ref{a:coercive-sorta} to nonsmooth functions) is nonstandard, but is relatively straightforward. The assumption comes in two parts. The first statement in the assumption is essentially a weak form of coercivity. A standard coercivity assumption in the context of optimization is to assume that $f(x) \to\infty$ as $\|x\|\to \infty$. In contrast, the first part of Assumption \ref{a:coercive-sorta-nonsmooth} merely assumes that the gradient points outward asymptotically. On the other hand, the second part of Assumption \ref{a:coercive-sorta-nonsmooth} is not related to coercivity. It is a restriction on the asymptotic growth rate of the gradient. The assumption is effectively equivalent to requiring that $f_n$ be bounded by a quadratic function the sense that $f_n(x) \leq C\|x\|^2$ for some $C>0$. This relationship can be obtained by integrating the inequality in \ref{a:coercive-sorta-nonsmooth} and applying the fundamental theorem of calculus. This assumption is needed to rule out unbounded oscillations that can occur due to discretization error. The only place in the paper it is directly applied is the proof of Lemma \ref{lemma:technical-inner-prod} (see in turn, Lemma \ref{lemma:compact-set}) where we show that iterates of D-SGD remain in a compact set. This is required to show convergence to critical points using the ODE method of stochastic approximation---see Theorem \ref{thrm:tame-functions}, item 1 and \citep{davis2018stochastic}. Notably, the assumption is not required to obtain convergence to critical points in the continuous-time settings---see \citep{SMKP-TAC2020} Assumption A.3 and Theorem 3. To the best of our understanding, the fact that we needed this assumption is an artifact of discretization. 

Assumption \ref{a:chain-rule} is a technical assumption that is required to ensure that a generalized gradient descent process actually descends the objective function when it is nonsmooth (i.e., under Assumption \ref{a:f_n-Lipschitz}). To understand why the assumption is needed, note that in the context of smooth optimization, if $h$ is a differentiable function and $\vx(t)$ is a gradient flow trajectory, so that $\ddt\vx(t) = -\nabla h(\vx(t))$, we have 
\begin{equation}\label{eq:descent-example}
    \frac{d}{dt} h(\vx(t)) = \langle \nabla h(\vx(t), \ddt\vx(t) \rangle
     = -\|\nabla h(\vx(t))\|^2,
\end{equation}
where the first equality follows from the standard chain rule. This equality establishes the critical relationship that a gradient flow trajectory descends the objective. For generalized gradient descent processes on nonsmooth functions (e.g., satisfying \ref{a:f_n-Lipschitz}), the chain rule need not hold and it is possible that a descent relationship such as \eqref{eq:descent-example} does not hold. Examples where this occurs are typically pathological and are not likely be encountered in practice \citep{daniilidis2020pathological}. Hence, the assumption is quite mild, but is required for technical reasons. 

Assumption \ref{a:grab-bag} is standard. Assumption \ref{a:CP-meas-zero} is technical, but is a standard assumption for obtaining convergence to critical points in stochastic approximation algorithms \citep{duchi2018stochastic, davis2018stochastic} and the assumption is quite mild.

We will now discuss the assumptions made for nonconvergence to saddle points. We will focus on the Assumptions of Theorem \ref{thrm:DT-nonconvergence-gen} as this generalizes Theorem \ref{thrm:DT-nonconvergence}.
Assumption \ref{a:eigvec-continuity} is not a standard assumption and is specifically required to handle the setting of \emph{distributed} optimization. To clarify the assumption, the following example provides an illustration of what is meant by ``differentiable eigenvectors'' in the assumption.
\begin{example}
Consider the function $f(x,y) = \frac{x^4}{12} + xy + \frac{y^2}{2}$. The Hessian of this function will take the form
\[
\nabla^2 f = \begin{bmatrix}x^2 & 1 \\ 1 & 1 \end{bmatrix}.
\]
We may compute that the eigenvectors of this matrix are given by 
\[
v_1(x) = \begin{bmatrix} \frac{x^2-1-\sqrt{5-2x^2 + x^4}}{2} \\ 1\end{bmatrix},\qquad v_2(x) = \begin{bmatrix} \frac{x^2-1+\sqrt{5-2x^2 + x^4}}{2} \\ 1\end{bmatrix}
\]
If we define
\[
U(x,y) = \begin{bmatrix} \frac{v_1(x)}{|v_1(x)|} & \frac{v_2(x)}{|v_2(x)|} \end{bmatrix}
\]
then $U$ is a unitary matrix which diagonalizes $\nabla^2 f $. One can verify that the components of this matrix are differentiable in $(x,y)$, and hence this matrix would satisfy the ``differentiable eigenvectors'' assumption. We notice that $U(x,y)$ is not the only matrix diagonalizing $\nabla^2 f$ at any given point, but this particular choice does give a differentiable function. Assumption \ref{a:eigvec-continuity} is known to hold for analytic functions \citep[p. 111]{katoBook}, and counterexamples where the assumption fails to hold must be carefully constructed to demonstrate the pathology \citep[Example 19]{SMKP-TAC2020} \citep[p. 111]{katoBook}. Hence, we anticipate it holds in many common situations.  
\end{example}
Regarding Assumption \ref{a:eigvec-continuity}, it is worth noting that the property we will require in proofs is that the Hessian of the sum function $(x_n)_{n=1}^N \mapsto \sum_n f_n(x_n)$ has differentiable eigenvectors. However, because the Hessian of this function is block diagonal, it is equivalent to assume that the property holds for each block, which is what is done is Assumption \ref{a:eigvec-continuity}. This is why the assumption deals with the individual functions and not the sum function.

In words, Assumption \ref{a:SGD} assumes that the noise occasionally perturbs in all directions. It is the same assumption made in \citep{pemantle1990nonconv}. The assumption enables gradient dynamics to escape saddle points by knocking the optimization process out of any undesirable sets it could get trapped in. Finally, to ensure nonconvergence to a saddle point, Assumption \ref{a:C3} requires that each $f_n$ is three times continuously differentiable (i.e., $C^3$) in a neighborhood of the saddle point.
The reason the assumption is needed is because we use the existence of a stable manifold near the saddle point to establish nonconvergence to the saddle point. The techniques for establishing the existence of stable manifolds typically require the function to be locally $C^3$.

\begin{figure}
\begin{center}

\begin{tabular}{|m{6em}|m{25em}|}
\hline
Assumption & Short description of assumption \\
 \hline
 A.1 & $f_n$ is $C^1$ with Lipschitz gradient. \\
 \hline
 A.2 & $\nabla f_n$ points outward as $x \to \infty$\\
 \hline
 A.3 & The connectivity graph is connected, weights $\alpha_k,\beta_k$ have appropriate decay, and the noise injection is mean zero and finite variance.\\
 \hline
 A.4 & The set of critical values of $f$ has a dense complement.\\
 \hline
 A.5 & The matrix $\nabla^2 f_n(x)$ may be diagonalized using a differentiable matrix $U(x)$.\\
 \hline
 A.6 & The noise provides some minimal excitation in all directions.\\
 \hline
 A.7 & $f_n$ is thrice differentiable.\\
 \hline
 A.8 & $f_n$ is locally Lipschitz continuous.\\
 \hline
 A.9 & The generalized gradient also points outward as $x \to \infty$.\\
 \hline
 A.10 & A generalized chain rule holds for gradient flow paths.\\
 \hline
\end{tabular}
\end{center}
\caption{Table of main assumptions, with brief descriptions.}
\label{fig:assumptions-summary}
\end{figure}

\begin{figure}
\begin{center}

\begin{tabular}{|m{4em}|m{20em}|m{6em}|} 
\hline
Theorem & Short description of theorem & Assumptions used \\
 \hline
 1 & D-SGD achieves consensus and converges to critical points (when $f_n$ has Lipschitz gradient). & A.1-A.4 \\
 \hline
 3 & D-SGD does not converge to saddle points which are regular (assuming $f_n$ is $C^3$) & A.2-A.3, A.5-A.7 \\
 \hline
 4 & D-SGD converges to local minimizers assuming that all saddle points are regular and $f_n$ is $C^3$. & A.2-A.7 \\
 \hline
 6 & D-SGD achieves consensus and converges to critical points in non-smooth case (generalizes Theorem 1) & A.3-A.4, A.8-A.10 \\
 \hline
 7 & D-SGD does not converge to saddle points which are regular and thrice differentiable near the saddle point (generalizes Theorem 3). & A.3, A.5-A.6, A.8-A.10 \\
 \hline
 8 & D-SGD must converge to local minimizers in non-smooth case, assuming all saddle points are regular enough to apply Theorem 7. & A.3-A.6, A.8-A.10 \\
 \hline
\end{tabular}
\end{center}
\caption{Table of main results, with brief descriptions.}
\label{fig:results-summary}
\end{figure}

\subsection{Related Work} \label{sec:related-work}
There has been a significant body of recent research on first-order algorithms for nonconvex optimization in classical (centralized) settings. Research on saddle-point nonconvergence and saddle-point escape time in centralized gradient methods includes
\citep{lee2018distributed,ge2015escaping,du2017gradient,jin2019nonconvex,murray2017revisiting,du2017gradient,du2018gradient}. Reference \citep{pemantle1990nonconv} considers nonconvergence to unstable points (such as saddle points) in \emph{autonomous} stochastic recursive dynamical systems (such as centralized gradient descent). Some of the key techniques used in the present paper to study D-SGD are inspired by the techniques developed in \citep{pemantle1990nonconv}. We stress, however, that the nonautonomous nature of the distributed dynamics makes the problem here more challenging and precludes the use of classical stable manifold theorems that crucially underpin all of above results.

Distributed gradient algorithms for \emph{convex} optimization have been the subject of intensive research over the past decade, see e.g., \citep{nedic2009distributed,rabbat2004distributed,duchi2011dual,ram2010distributed,scaman2019optimal,jakovetic2014fast,nedic2014distributed} and references therein. Important considerations include time-varying vs. static communication graphs \citep{nedic2014distributed}, directed vs. undirected communication graphs \citep{nedic2014distributed}, communication efficiency \citep{mota2013d}, rate of convergence \citep{jakovetic2014fast,scaman2019optimal,jakovetic2018unification,uribe2020dual,scaman2019optimal,mokhtari2016dsa}, and mitigating communication overhead \citep{koloskova2019decentralized}. 

Distributed algorithms for \emph{nonconvex} optimization have been a subject of more recent focus. The majority of work on distributed gradient methods for nonconvex optimization have focused on addressing various challenging issues related to convergence to critical points 
\citep{bianchi2012convergence,di2016next,sun2016distributed,scutari2019distributed,tatarenko2017non,tian2018asy,sun2019distributed,wai2017decentralized}.
More recent work has focused on refined convergence guarantees.
References \citep{daneshmand2018second,daneshmand2018second-b} consider deterministic DGD and nonconvergence to saddle points. It is shown that for sufficiently small constant step size, DGD avoids regular saddle points and converges to the neighborhood of a second-order stationary point from almost all initializations. The result relies on the classical stable manifold theorem applied to an appropriate Lyapunov function that captures both the consensus dynamics and the gradient dynamics descending \eqref{eqn:f-def} given a fixed step size. 
In a similar vein, the work \citep{vlaski2019distributed1,vlaski2019distributed2} considers a constant-step size gradient-based algorithm for distributed stochastic optimization. It is shown that the algorithm avoids saddle points, and a polynomial escape time bound is established. The work \citep{vlaski2019second} considers relaxed conditions on gradient noise variance to escape saddle points.
Our work differs from these in that we consider a decaying step size (and stochastic, vis-\`{a}-vis \citep{daneshmand2018second}) version of D-SGD which is able to asymptotically handle noise and obtain convergence to consensus and convergence to local minima, rather than a neighborhood of local minima or recurrence to minima. Moreover, we explicitly characterize the mean dynamics of D-SGD (i.e., continuous-time DGF) which is a powerful technical tool and provides significant insight into the structure of distributed gradient algorithms near saddle points. 
We also note that a primal-dual method for distributed nonconvex optimization with local minima convergence guarantees was considered in \citep{hong2018gradient}. In this paper, however, we focus explicitly on properties of distributed gradient descent. 

The present work is closely related to \citep{bianchi2012convergence} which considers a slightly different variant of D-SGD  for constrained optimization. In order to handle the constraint, \citep{bianchi2012convergence} utilizes a projection step integrated with each D-SGD iteration. Reference \citep{bianchi2012convergence} showed convergence of this algorithm to the set of KKT points. Our work may be seen as a partial generalization of \citep{bianchi2012convergence} in that, we consider \emph{unconstrained} optimization of \eqref{eqn:f-def} (thus omitting the projection step) for nonsmooth functions and study the problem saddle point avoidance.  
We note that, in order focus on the essential problems in evasion of saddle points in distributed optimization, we have decided to focus on the relatively simple setting of time-invariant directed communication graphs. 
Future work may consider extensions to undirected and time-varying graphs. 

The topic of distributed optimization when loss functions are both \emph{nonconvex} and \emph{nonsmooth} has received limited attention in the literature. References \citep{di2016next} and \citep{scutari2019distributed} consider convergence to critical points when the distributed objective is the sum of a smooth nonconvex component and a nonsmooth convex component  (or difference-of-convex with smooth convex part). 
More recently, \citep{kungurtsev2019distributed} studied a variant of D-SGD assuming agents' private loss functions are obtained as the maximum of the set of smooth loss functions and have bounded Lipschitz constant. This assumption captures many problems of interest but does not permit, for example, $\ell_2$ regularization. 
In contrast, centralized SGD for \emph{nonsmooth nonconvex} optimization was studied in the recent paper \citep{davis2018stochastic}, which demonstrated convergence of SGD to critical points under very broad assumptions that include nonsmooth neural network architectures and typical regularization schemes. The current paper demonstrates convergence of D-SGD to critical points under similar assumptions to \citep{davis2018stochastic}, with the exception that, due to the distributed setting, we also require Assumption \ref{a:eigvec-continuity} (See Theorem \ref{nonsmooth-conv-to-cp} above).\footnote{Aside from Assumption \ref{a:eigvec-continuity}, our assumptions are nearly identical to those made in \citep{davis2018stochastic}, modulo minor differences; e.g., \citep{davis2018stochastic} assumes iterates remain bounded, while we explicitly assume loss functions to be coercive (Assumption \ref{a:coercive-sorta-nonsmooth}) in order to ensure bounded iterates (via Lemma \ref{lemma:compact-set}).}  
As discussed above, we reiterate this additional assumption is quite weak.

In a companion paper \citep{SMKP-TAC2020} (see also \citep{swenson2019allerton}), we study the problem of nonconvergence to saddle points for (continuous-time) DGF and establish a stable manifold theorem for DGF. The stable manifold for DGF plays a key role in the analysis of saddle point avoidance for D-SGD in the present paper. 
We also remark that a distributed gradient-based algorithms for computing global minima have recently been considered in
\citep{swenson2019CDC,swenson2019ICASSP}. In these algorithms, noise is deliberately added in order to escape local minima and seek out global minima.






\begin{center}
\begin{figure}
\begin{center}
\includegraphics[width=8cm]{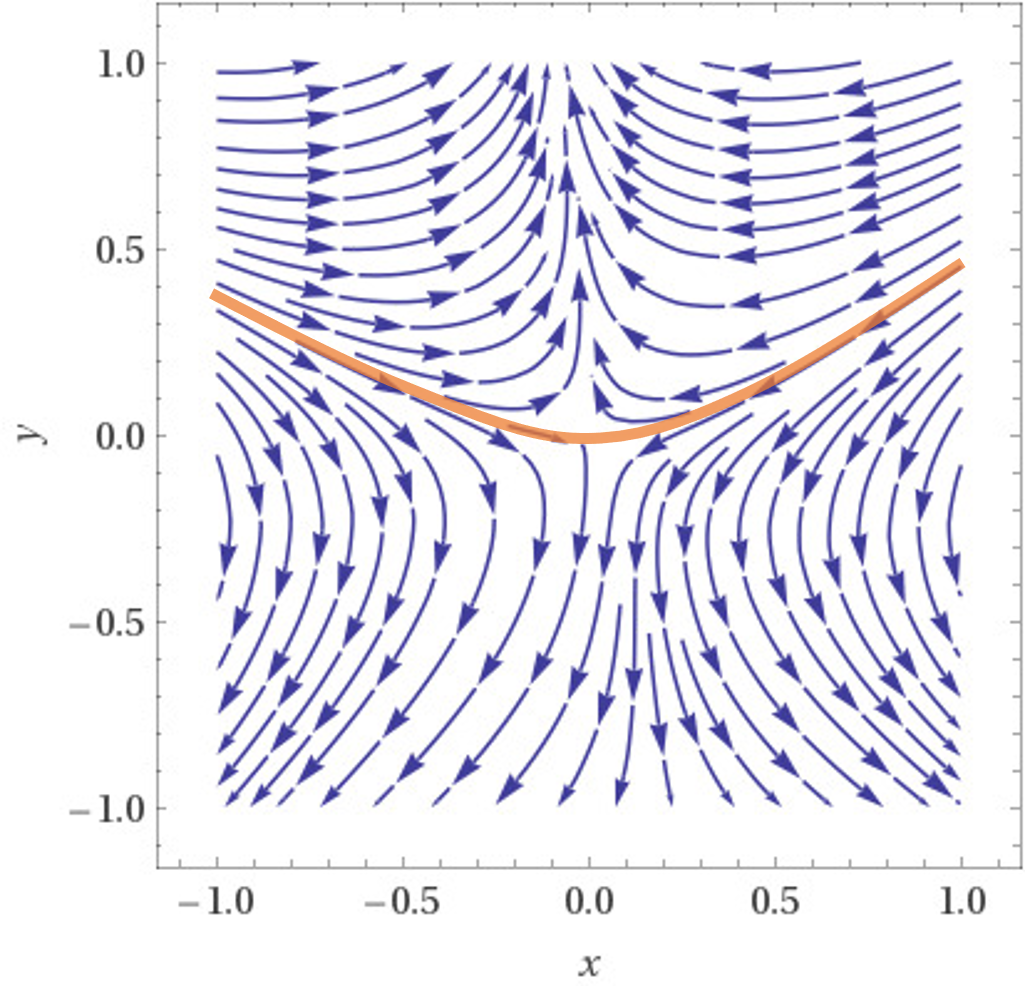}
\end{center}
\caption{In orange, the stable manifold for continuous GD applied to a fixed (autonomous) function over $\R^2$ with a saddle point at the origin. 
The proof that D-SGD avoids saddle points (Theorems \ref{thrm:DT-nonconvergence} and \ref{thrm:DT-nonconvergence-gen}) is intricate, but 
the underlying intuition is simple and is captured in this figure. First, we will see that D-SGD \eqref{dynamics_DT} (or \eqref{dynamics_DT-DI}) approximates some continuous gradient flow as $k\to\infty$. The underlying gradient flow will be referred to as DGF. We compute the \emph{stable manifold} for DGF near saddle points---a low-dimensional surface passing through the saddle point. The stable manifold for DGF is the key object in our analysis: It is precisely the set that D-SGD is repelled from (asymptotically, and in expectation). 
Utilizing this property, we prove that D-SGD avoids saddle points. The figure visualizes only classical centralized GD in two dimensions. In D-SGD, and in higher dimensions, the stable manifold is more complex (and time varying), but the basic idea is the same. This technique originated in \citep{pemantle1990nonconv}.}
\label{fig:stable-man1}
\end{figure}
\end{center}

\subsection{High-Level Discussion of Proof Techniques} \label{sec:high-level-pf-discussion}

Our approach for proving the main results will be to study a differential equation (or inclusion) that approximates the dynamics \eqref{dynamics_DT-DI} asymptotically. A standard technique in the ODE method of stochastic approximation is to relate the limit set of a discrete-time process to the limit set of its continuous-time analog. Typically, one shows that the limit set of the discrete-time process is contained in the limit set of the continuous-time analog. Proving convergence of the discrete-time process to some set then reduces to simply proving convergence of the continuous-time analog to the set. This is the approach that we will take to proving convergence to critical points. In particular, we will leverage the results of \citep{davis2018stochastic} which provides powerful tools for analyzing processes when the continuous-time analog is a differential inclusion.

Proving \emph{nonconvergence} to saddle points requires a different tack. Showing nonconvergence of the continuous-time analog to a point is not sufficient to ensure that the discrete-time process will not converge to it. Instead, we wish to identify some underlying structure from which the discrete-time dynamics are ``repelled.'' Near a saddle point, one can show the existence of a stable manifold for gradient flows \citep{chicone2006ordinary}. 
In \citep{pemantle1990nonconv} it was shown that for autonomous systems (ODEs where the right-hand side does not depend on time) discrete-time dynamics are asymptotically repelled from the stable manifold of the underlying continuous-time system. We will prove our results following a similar strategy. As noted earlier, because D-SGD is nonautonomous, classical theory does not apply. In \citep{SMKP-TAC2020} the existence of stable-manifolds was proved for the continuous-time variant of D-SGD. In this paper, we will show that D-SGD is repelled from the stable manifold of its continuous-time analog. This will imply nonconvergence to the saddle point, almost surely.

\subsection{Roadmap of Proof Strategy} \label{sec:pf-strategy}
Our approach to proving the main results (Sections \ref{sec:DT main results}--\ref{sec:nonsmooth-extension}) will be as follows. First, we note that we will focus on proving results in the nonsmooth setting, as results in the smooth setting follow as a special case. Thus, we will prove Theorems \ref{nonsmooth-conv-to-cp} and \ref{thrm:DT-nonconvergence-gen}. 

Next, we observe that the problem of minimizing \eqref{eqn:f-def} in a distributed setting is a special case of a general subspace-constrained optimization problem. (This observation has been made in several other recent papers, e.g., \citep{hong2018gradient}.) Rather than focus only on the particular setting of distributed gradient descent, we will study the broader problem of subspace-constrained optimization, and we will prove our main results about D-SGD as direct corollaries to results in this general framework.
We emphasize that the move to a more general framework will not come at the cost of a more complex presentation. 
The effect is the opposite---the general framework allows us to dispense with cumbersome notation associated with distributed consensus processes. The D-SGD dynamics \eqref{dynamics_DT-DI} simply become the gradient descent dynamics for an objective function plus a quadratic penalty. 
Proofs are simplified and intuition is more transparent. 

In Section \ref{sec:general-setup} we introduce the general subspace-constrained optimization problem. We also introduce dynamics for solving said optimization problem that generalize \eqref{dynamics_DT-DI}. 

In Section \ref{sec:conv-to-CP} we prove convergence to critical points (i.e., Theorem \ref{nonsmooth-conv-to-cp}). Convergence to critical points can be studied by approximating D-SGD with an appropriate gradient flow system. We will prove our results using the so-called ODE approach to stochastic approximation \citep{benaim1996dynamical}. More precisely, in order to prove results in the context of nonsmooth loss functions, we will study differential inclusions, rather than ODEs.
A key tool in this analysis will be results developed in \citep{davis2018stochastic}.

We will then turn our attention to the problem of showing nonconvergence to saddle points. Our approach to addressing this problem will again rely on approximating D-SGD with a continuous-time gradient flow, which we refer to as DGF.
However, unlike the case of studying convergence to critical points, there are no standard tools to be applied in this case. To address this problem, we first consider the DGF ODE.\footnote{In fact, we will consider a continuous-time version of our generalized dynamics introduced in Section \ref{sec:gen-framework}. But for simplicity we refer to this as continuous-time DGF here.} In \citep{SMKP-TAC2020} it was shown that a stable manifold exists for DGF near regular saddle points. In classical ODE theory it is well known that the stable manifold is a Lyapunov unstable set. The same holds for the stable manifold for DGF. As a consequence, D-SGD will be shown to be ``repelled'' from from the DGF stable manifold. Intuitively, leveraging this property we will be able to show almost sure nonconvergence to saddle points. See Figure \ref{fig:stable-man1} for further intuition and a low-dimensional visualization of this idea for classical GD. 

More precisely, In Section \ref{sec:DT-non-conv} we state our general saddle-point nonconvergence result for the subspace-constrained setting (Theorem \ref{thrm:saddle-instability}) and briefly review how this implies Theorem \ref{thrm:DT-nonconvergence-gen}.

In Section \ref{sec:CT-stable-manifold} we review the stable-manifold theorem for DGF from \citep{SMKP-TAC2020}. Here we will also prove an important smoothness property of the stable manifold (Lemma \ref{lemma:manifold-C2}) which will be necessary in proving nonconvergence to saddle points.

In Section \ref{sec:D-SGD-geometry} we further characterize the stable manifold for DGF and prove a key inequality (see \eqref{eq:eta-ineq} and Proposition \ref{prop:eta-properties}). Informally, this inequality states that D-SGD is repelled from the stable manifold. We note that this is the most technically involved aspect of the paper. 

Finally, in Section \ref{sec:stochastic-analysis-D-SGD} we perform the stochastic analysis of D-SGD and formally prove nonconvergence to saddle points (Theorem \ref{thrm:DT-nonconvergence-gen}). Importantly, we note that our approach to proving Theorem \ref{thrm:DT-nonconvergence-gen} builds upon the techniques developed in \citep{pemantle1990nonconv} to characterize nonconvergence to unstable points in autonomous dynamical systems. However, in studying D-SGD we are required to study nonautonomous dynamics which makes the problem substantially more challenging.




\subsection{Notation} \label{sec:prelims}
Throughout the paper, we use bold face letters, e.g., $\vx(t)$ to refer to continuous-time processes where $t\in\R$ is the time index, and we use non-bold letters, e.g., $x(k)$ to refer to discrete-time processes, where $k$ is a positive integer. When referring to weight parameters such as $\alpha_t$ and $\beta_t$, in order to reduce clutter we will place the time index in the subscript (since there is no need to specify an agent $n$ associated with these quantities). 
We say that $g\in C^r(\R^m; \R^n)$, for integer $r\geq 1$, if $g:\R^m\to\R^n$ is a function that is $r$-times continuously differentiable. When the domain and codomain are clear from the context, we simply use the shorthand $g\in C^r$ or say $g$ is $C^r$.
If $g$ is $C^1$, we use the notation $D[g,x]$ to denote the derivative of $g$ at the point $x$. Treating $D[g,x]:\R^m\to\R^n$ as a linear operator, we use the notation $D[g,x](y)$, $y\in \R^m$ to indicate the action of $D[g,x]$ on $y$.
When the meaning is clear from the context, we will sometimes use the shorthand $Dg(x)$ to denote $D[g,x]$, and treat $D[g,x]$ and $Dg(x)$ as $n\times m$ matrices. When $g\in C^2$, then we use $D^2[g,x]$ to indicate the second derivative of $g$ at $x$, where $D^2[g,x]:\R^m\times\R^m\to\R^n$ is a bilinear operator, and we use the notation $D^2[g,x](y,z)$ to indicate its action on inputs $y,z\in\R^m$.
Moreover, in the case that $g:\R^n\to\R$ is $C^2$, we often use the standard notation $\nabla g$ and $\Hess g$ to refer to the gradient and Hessian of $g$ respectively.
Given functions $g,h:\R\to\R$ we say that $g(t) = \Theta(h(t))$ if there exist constants $a,b>0$ such that $a h(t) \leq g(t) \leq bh(t)$ for all $t$ sufficiently large. 

At a few points in the manuscript we move to Einstein notation, in which any repetition in subscripts denote an implied summation over that index: i.e. $A_{ij} b_j = \sum_j A_{ij} b_j$. This convention condenses tensor operations, and is particularly useful for chain rule computations for vector fields. As this notation is less standard in the optimization literature, we only use it in specific situations where matrix notation would become cumbersome, e.g. Lemma \ref{lemma:manifold-C2}.

We will use $\|\cdot\|$ to denote the standard Euclidean norm.  Given a set $S\subset \R^d$ and point $x\in \R^d$, we let $\dist(x,S) := \inf_{y\in S} \|x-y\|$ and for $\delta>0$ we let $B_\delta(S) := \{x: \dist(x,S)<\delta\}$. When we say $x(k)\to S$ as $k\to\infty$, we mean that $\lim_{k\to\infty} \dist(x(k),S)=0$. Given $a,b\in\R$, $a\wedge b$ is the minimum of $a$ and $b$. $A \otimes B$ indicates the Kronecker product of matrices $A$ and $B$.  Given a matrix $A\in \R^{d\times d}$, $\diag(A)$ is the $d$-dimensional vector containing the diagonal entries of $A$. In an abuse of notation, given a vector $v\in \R^d$, we also use $\diag(v)$ to denote the $d\times d$ diagonal matrix with entries of $v$ on the diagonal. 

Given a graph $G=(V,E)$, the set of vertices $V=\{1,\ldots,N\}$ will be used to denote the set of agents and an edge $(i,j)\in E$ will denote the ability of two agents to exchange information. In this paper we will assume $G$ is undirected, meaning that $(i,j) \in E$ implies that $(j,i) \in E$. We let $\Omega_n$ denote the set of neighbors of agent $n$, namely $\Omega_n = \{i \in 1 \dots N : i \neq n, (i,n) \in E\}$, and we let $d_n =|\Omega_n|$.
The graph Laplacian is given by the $N\times N$ matrix $L = D-A$, where $D=\diag(d_1,\ldots,d_N)$ is the degree matrix and $A=(a_{ij})$ is the adjancency matrix defined by $a_{ij} = 1$ if $(i,j)\in E$ and $a_{ij}=0$ otherwise. Further details on spectral graph theory can be found in \citep{chung1997spectral}.

Suppose that $F:\R^d\to\R$ is of class $C^1$, and consider the general gradient-descent differential equation
\begin{equation} \label{eqn:generic-ode}
\dot \vx = -\nabla F(\vx),
\end{equation}
where $\vx:\R\to\R^d$ and $\dot \vx$ denotes $\ddt \vx(t)$. We say $\vx$ is a solution to \eqref{eqn:generic-ode} with initial condition $x_0$ at time $t_0$ if $\vx$ is $C^1$, satisfies $\vx(t_0) = x_0$, and satisfies \eqref{eqn:generic-ode} for all $t\geq t_0$.

It is known that the generalized gradient (Definition \ref{def:gen-grad}) is upper semicontinuous when the function in question is locally Lipschitz continuous \citep{clarke1990optimization}. This concept will be important in the subsequent analysis and is formally defined as follows.
\begin{definition} \label{def:upper-semicont}
A set-valued function $G:\R^m\to\R^m$ is said to be \emph{upper semicontinuous} at $x$ if for any $\e>0$ there exists a $\delta > 0$ such that for all $y\in B_\delta(x)$, $G(y)\subset B_{\e}(G(x))$
\end{definition}

We will consider recursive stochastic processes $\{y(k)\}_{k=1}^\infty$ of the form $y(k+1) = y(k) + G(y(k),\xi(k+1),k)$, where $G:\R^d\times\R^d\times \R$, $\xi(k+1)$ denotes a noise term and $k$ denotes the iteration number. We use $\calF_k = \sigma(\{x(j),\xi(j)\}_{j=1}^k)$ to denote the filtration representing the information available at iteration $k$.

A list of shorthand symbols commonly used throughout the paper (e.g., $N=$ number of agents) can be  found in the appendix.


\section{Generalized Setup: Subspace-Constrained Optimization} \label{sec:general-setup}
The problem of minimizing \eqref{eqn:f-def} in a distributed setting is equivalent to the subspace-constrained optimization problem
\begin{equation} \label{eq:dist-opt-prob}
\min_{\substack{x_n\in \R^d\\ n=1,\ldots,N\\ x_1 = x_2 = \cdots = x_N }} \sum_{n=1}^N f_n(x_n).
\end{equation}
Rather than focus on the particular problem \eqref{eq:dist-opt-prob}, we will study the broader class of all \emph{subspace-constrained optimization problems}, of which \eqref{eq:dist-opt-prob} is a special case. There are several advantages to taking this approach. Notation is simplified, proofs and intuition are more transparent, and results apply in a more general context.

In this section, we introduce a simple subspace-constrained optimization problem (generalizing \eqref{eq:dist-opt-prob}) that will be considered in the remainder of the paper. We then set up optimization dynamics extending \eqref{dynamics_DT-DI} which address the general subspace-constrained optimization problem.

The general optimization problem and dynamics will be set up in Section \ref{sec:gen-framework}.
However, before setting up the general framework, we will first briefly review continuous-time DGF and discuss a key time change operation in Section \ref{sec:time-changes}.
In particular, after a time-change, D-SGD and DGF will admit a clean and intuitive interpretation in terms of gradient descent with respect to a penalty function. This interpretation will be described in Section \ref{sec:gen-framework}.

\subsection{Continuous-Time DGF and Time Changes} \label{sec:time-changes}
The approach we will take for analyzing \eqref{dynamics_DT} throughout the paper will be to approximate the algorithm with a continuous-time ODE that is easier to study (i.e., DGF). 
Let $\vx_n(t)$ represent agent $n$'s estimate of a minimizer of \eqref{eqn:f-def} at time $t\in[0,\infty)$. We define 
DGF agentwise by the ODE
\begin{equation} \label{dynamics_CT}
\dot \vx_n(t) = \beta_t \sum_{\ell \in \Omega_n} (\vx_\ell(t) -\vx_n(t)) - \alpha_t \grad f_n(\vx_n(t))
\end{equation}
for $n=1,\ldots,N$, where $t\mapsto \alpha_t \in (0,1]$ and $t\mapsto \beta_t \in (0,1]$ are weight parameters. 

The ODE \eqref{dynamics_CT} may be expressed compactly as
\begin{equation} \label{eq:ODE-vec-form}
\dot \vx = \beta_t(L\otimes I_{d})\vx - \alpha_t(\nabla f_n(\vx_n))_{n=1}^N,
\end{equation}
where we let $\vx:\R\to\R^{Nd}$ be the vectorization $\vx := (\vx_1,\ldots,\vx_N)$, where $\vx_n:\R\to\R^d$ represents the state of agent $n$, and, as before, we assume $\alpha_t = o(\beta_t)$. It will often be convenient to study this ODE under a time change. In particular, assuming $\alpha_t>0$ for $t\geq 0$, set $S(t) = \int_{0}^t \alpha_r \,dr$ and let $T(\tau)$ denote the inverse of $S(t)$, so that $T(S(t)) = t$. Letting $\vy(\tau) = \vx(T(\tau))$ we have
\begin{equation} \label{eq:ODE-alt-form1}
\frac{d}{d\tau} \vy(\tau) = \gamma_\tau(L\otimes I_{d})\vy(\tau) - (\nabla f_n(\vy(\tau)))_{n=1}^N,
\end{equation}
where $\gamma_\tau = \frac{\beta_{T(\tau)}}{\alpha_{T(\tau)}} \to \infty$ as $\tau\to\infty$. Likewise, if we set $S(t) = \int_{0}^t \beta_r \,dr$ and let $T(\tau)$ denote the inverse of $S(t)$ we have
\begin{equation} \label{eq:ODE-alt-form2}
\dot \vy(\tau) = (L\otimes I_{d})\vy(\tau) - \tilde \gamma_\tau(\nabla f_n(\vy(\tau)))_{n=1}^N,
\end{equation}
where $\tilde \gamma_\tau = \frac{\alpha_{T(\tau)}}{\beta_{T(\tau)}} \to 0$ as $\tau\to\infty$. Thus, processes of the form \eqref{eq:ODE-alt-form1} or \eqref{eq:ODE-alt-form2}, with $\gamma_t \to \infty$ or $\tilde \gamma_t\to 0$ respectively, generalize dynamics of the form \eqref{dynamics_CT}.
When convenient, we will study \eqref{eq:ODE-alt-form1} or \eqref{eq:ODE-alt-form2} (with associated parameter $\gamma_\tau$ or $\tilde \gamma_\tau$) in lieu of \eqref{dynamics_CT}. 

\subsection{Subspace-Constrained Optimization Framework} \label{sec:gen-framework}
Consider the optimization problem
\begin{align} \tag{P.1} \label{eq:opt-prob}
\min_{x\in R^M} & \quad h(x)\\
\text{subject to} & \quad  x^\T Q x = 0,
\end{align}
where $h:\R^M\to\R$ is a $C^1$ function and $Q\in\R^{M\times M}$ is a positive semidefinite matrix. We will often denote the constraint set by
\begin{equation} \label{eq:constraint-def}
\C \coloneqq \left\{x\in \R^M:\,x^\T Qx = 0\right\}.
\end{equation}
Since $Q$ is positive semidefinite, $\C$ is precisely $\{x: Qx = 0\}$, i.e., the nullspace of $Q$; we write the constraint in its quadratic form because we will solve this problem using a penalization approach that connects directly with the quadratic form.
In the remainder of the paper we will focus on algorithms for computing local minima of \eqref{eq:opt-prob}.

\bigskip
\noindent \textbf{Continuous-Time Dynamics}. Suppose $h$ is $C^1$ and consider the following continuous-time dynamical system for solving \eqref{eq:opt-prob}:\footnote{The behavior of DGF with nonsmooth loss functions was considered in \citep{SMKP-TAC2020}. In this paper, when we study DGF it will be in the neighborhood of saddle points where we will always assume $h$ is smooth. Thus, when studying the descent process \eqref{eq:ODE1} in this paper we will always consider the gradient $\nabla h$, not the generalized gradient $\partial h$. However, when discussing discrete-time processes below, we will consider nonsmooth loss functions and generalized gradients.}
\begin{equation}\label{eq:ODE1}
\dot \vx = - \gamma_t Q\vx -\nabla h (\vx) ,
\end{equation}
where the weight $\gamma_t\to\infty$.
Note that these may be viewed as the gradient descent dynamics associated with the (time-varying) function $x\mapsto h(x) + \gamma_t x^\T  Q x$, i.e., $$
\dot \vx = -\nabla_x \left(h(\vx) + \gamma_t\vx^\T  Q\vx\right).
$$
The term $\gamma_t x^\T  Q x$ may be thought of as a quadratic penalty term that punishes deviations from $\C$ with increasing severity as $t\to\infty$.

\bigskip
\noindent \textbf{Discrete-Time Dynamics}. Suppose $h$ is locally Lipschitz continuous, and consider the following discrete-time dynamics for solving \eqref{eq:opt-prob}:
\begin{equation} \label{dynamics_DT3}
x(k+1) = x(k) - \alpha_k\Big(v(k) + \gamma_kQx(k) + \xi(k+1) \Big),
\end{equation}
where $v(k) \in \partial h\left(x(k)\right)$, $\gamma_k\to\infty$, $\alpha_k\to 0$,
and $\{\xi(k)\}_{k\geq 1}$ represents noise given by a Martingale difference sequence (i.e., conditionally zero-mean noise as in Assumption \ref{a:grab-bag}). If $h\in C^1$, the recursion \eqref{dynamics_DT3} may be viewed as a perturbed discretization of the ODE \eqref{eq:ODE1} with (diminishing) step size $\alpha_k$, in the sense that, the expected update satisfies
$$
\E(x(k+1)\vert\calF_{k}) = x(k) + \alpha_k\Big(- \gamma_kQx(k)-\nabla h\left(x(k)\right)  \Big).
$$
Letting $\beta_k = \alpha_k\gamma_k$, using similar reasoning to the continuous-time case, the discrete-time D-SGD process  \eqref{dynamics_DT-DI} may be seen as a special case of \eqref{dynamics_DT3}, where we use $\gamma_k$ to be consistent with the penalty interpretation of Section \ref{sec:time-changes}.

\subsection{Interpreting Results: From General Framework to D-SGD Framework} \label{sec:gen-to-DGD}
All of the results in the remainder of the paper will be proved in the context of Problem \eqref{eq:opt-prob} and optimization dynamics \eqref{dynamics_DT3}.  
Under Assumptions \ref{a:grab-bag}--\ref{a:C3}, the problem of optimizing \eqref{eqn:f-def} under the D-SGD dynamics  \eqref{dynamics_DT-DI} is a special case of this general framework in which we let the dimension be $M= Nd$, the state $x\in \R^{Nd}$ is given by the vectorization of all agents' states $x=(x_n)_{n=1}^N$, the objective function is given by $h(x) = \sum_{n=1}^Nf_n(x_n)$, and the penalty term is generated by the matrix $Q = (L\otimes I_d)$, where $I_d$ is the $d\times d$ identity matrix and $L$ denotes the graph Laplacian of $G$. Under Assumption  \ref{a:grab-bag}, the nullspace of $(L\otimes I_d)$ is the \emph{consensus subspace} $\{(x_n)_{n=1}^N \in \R^{Nd} :x_1=\cdots = x_N\}$. Thus, the constraint space $\C$ in \eqref{eq:constraint-def} is given by the consensus subspace in the context of D-SGD. 

We now introduce some conventions that will simplify presentation.
Unless otherwise noted, without loss of generality
assume the coordinate system is rotated so that the constraint space is given by
\begin{equation} \label{eq:C-rotation}
\calC = \{x\in\R^M: x_{d+1}=\cdots =x_M = 0\},
\end{equation}
where we let $d=\dim\C$. 
Given a vector $x\in \R^M$, we will use the decomposition
\begin{equation} \label{eq:x-decomposition1}
x = (x_c,x_{nc}),
\end{equation}
$x_c\in \R^d$, $x_{nc}\in\R^{M-d}$, where the subscripts indicate the ``constraint'' and ``not constraint'' components respectively.
In a slight abuse of notation, given $x_c\in \R^{d}$ we let
$$
h\vert_\C(x_c) := h(x_c,0).
$$
Given $x\in \R^M$ define $\partial_{x_c} h(x) := \{z\in \R^{d}: (z,y)\in \partial h(x), ~\mbox{ for some } y\in \R^{M-d}\}.$
In a slight abuse of terminology, we say that $x^*=(x_c^*,x_{nc}^*)\in\R^M$ is a critical point of $h\vert_\C$ if $0\in \partial_{x_c} h(x_c^*,0)$, or equivalently, if $0\in \partial h\vert_\C(x_c^*)$.


In the D-SGD framework of Section \ref{sec:main-results}, convergence of D-SGD to consensus corresponds here to convergence of \eqref{dynamics_DT3} to $\calC$. Likewise, critical points of \eqref{eqn:f-def} correspond to critical points of $h\vert_\calC$.

In order to show the main results in Section \ref{sec:main-results} (Theorems \ref{nonsmooth-conv-to-cp} and \ref{thrm:DT-nonconvergence-gen}), we will show that the following hold with probability 1:  \eqref{dynamics_DT3} converges to $\calC$ (see Theorem \ref{thrm:conv-to-cp-DT-h}); \eqref{dynamics_DT3} converges to critical points of $h\vert_{\C}$ (see Theorem \ref{thrm:conv-to-cp-DT-h}); \eqref{dynamics_DT3} does not converge to regular saddle points of $h\vert_\C$ (see Theorem \ref{thrm:saddle-instability}).

\section{Convergence to Critical Points} \label{sec:conv-to-CP}
In this section we show that \eqref{dynamics_DT3} converges to critical points of $h\vert_\C$.
We begin by presenting several assumptions. We emphasize that the assumptions made through the remainder the paper are distinct from all assumptions made thus far in that all subsequent assumptions apply to the general subspace-constrained optimization framework of \eqref{eq:opt-prob} and \eqref{dynamics_DT3}. We will make sufficiently broad assumptions so that the D-SGD framework of Section \ref{sec:main-results} is special case of the general framework. 
To distinguish the assumptions pertaining to each framework, the assumptions for the distributed framework are numbered A.1, A.2, etc., while the assumptions for the general subspace-constrained framework will be numbered 
B.1, B.2, etc.


\begin{newassumption} \label{a:h-loc-lip}
$h$ is locally Lipschitz continuous.
\end{newassumption}

\begin{newassumption} 
\label{a:coercive-h}
For some radius $R>0$ and constants $C_1,C_2>0$ the following conditions hold:
\begin{equation} \label{eq:lemma-grad-alignment}
\langle x, v\rangle \geq C_1\|x\|\|v\| \quad\quad \mbox{ and }  \quad\quad \|v\| \leq C_2\|x\|,
\end{equation}
for all $v\in \partial h(x)$ and $\|x\|\geq R$. 
\end{newassumption}
Note that the first part of \eqref{eq:lemma-grad-alignment} simply ensures that asymptotically, the negative gradient points inwards. The second part of \eqref{eq:lemma-grad-alignment} ensures that the function is asymptotically subquadratic in the sense that (selections of) the generalized gradient grow more slowly than the gradient of some quadratic function.

\begin{newassumption} \label{a:cp-meas-0-h}
Let $\CP_{h\vert_\C}$ be the set
of critical points of $h\vert_\C$. Assume the set $\R\backslash h\vert_\C(\CP_{h\vert_\C})$is dense in $\R$. 
\end{newassumption}
\begin{newassumption} \label{a:chain-rule-h}
For any absolutely continuous function $\vz:[0,\infty)\to \R^{d}$, 
$h\vert_\C$ satisfies the chain rule
$$
\ddt h\vert_\C(\vz(t)) = \big\langle v, \ddt \vz(t) \big\rangle, 
$$
for some $v\in \partial f(\vz(t))$, and almost all $t\geq 0$. 
\end{newassumption}
\begin{newassumption} 
\label{a:Q-PSD1}
  $Q\in \R^{M\times M}$ is positive semidefinite with at least one zero eigenvalue.
\end{newassumption}

\begin{newassumption} \label{a:weights-h}
$\alpha_k = \Theta(k^{-\tau_\alpha})$ and $\gamma_k = \Theta(k^{\tau_\gamma})$ where  $\frac{1}{2}< \tau_\gamma <\tau_\alpha \leq 1$, $\alpha_k,\gamma_k\not=0$.
\end{newassumption}
Finally, we will assume the following regarding the noise process $\{\xi(k)\}_{k\geq 1}$ in \eqref{dynamics_DT3}. We recall that we assume a coordinate rotation so $\calC = \{x\in\R^M: x_{d+1}=\ldots = x_M = 0\}$, and let $\xi(k)$ be decomposed as $\xi(k) = (\xi_c(k),\xi_{nc}(k))$.
\begin{newassumption} \label{a:noise2}
For all $k\geq 1$, $\E(\xi_c(k)\vert \calF_{k-1}) = 0$ and $\E(\|\xi(k)\|^2\vert \calF_k)<B$ for some $B>0$.
\end{newassumption}

We will prove the following result.
\begin{theorem} \label{thrm:conv-to-cp-DT-h}
Let $\{x(k)\}_{k\geq 1}$ be a solution to \eqref{dynamics_DT3} and suppose that Assumptions \ref{a:h-loc-lip}--\ref{a:noise2} hold. Then,
\begin{itemize}
    \item [(i)] $x(k) \to \C$ as $k\to\infty$.
    \item [(ii)] $x(k)$ converges to the set of critical points of $h\vert_{\C}$ as $k\to\infty$.
\end{itemize}
\end{theorem}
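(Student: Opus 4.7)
The plan is to prove the two claims sequentially using the ODE method for stochastic approximation with differential inclusions, treating the penalty term as a time-varying force that pulls iterates onto $\C$ while a slower gradient term effectively performs SGD along $\C$. Concretely, I will (a) establish a.s.\ boundedness of $\{x(k)\}$, (b) show $x(k)\to\C$ by exploiting the fast time scale $\beta_k=\alpha_k\gamma_k$, and (c) project the dynamics onto $\C$ to obtain a standard subgradient-flow differential inclusion, and invoke results of Davis--Drusvyatskiy to conclude convergence to $\CP_{h\vert_\C}$.

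For boundedness, I would use $V(x)=\tfrac{1}{2}\|x\|^2$ as a Lyapunov function. Assumption \ref{a:coercive-h} gives $\langle x,v\rangle\geq C_1\|x\|\|v\|$ and $\|v\|\leq C_2\|x\|$ for $\|x\|\geq R$, and Assumption \ref{a:Q-PSD1} gives $\langle x, Qx\rangle\geq 0$. Expanding $V(x(k+1))$ using \eqref{dynamics_DT3}, the conditional drift is dominated by $-\alpha_k\langle x(k), v(k)\rangle - \alpha_k\gamma_k\langle x(k),Qx(k)\rangle + \alpha_k^2 \cdot O(\|x(k)\|^2+B)$. Since $\sum \alpha_k^2<\infty$ (as $\tau_\alpha>1/2$) and the first-order drift is nonpositive outside a ball, a standard stochastic Lyapunov/Robbins--Siegmund argument yields a.s.\ boundedness of $\{x(k)\}$.

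For convergence to $\C$, I would perform the time change $\tau=\sum_{j\leq k}\beta_j$, so that the interpolated process satisfies, after rescaling, a dynamics of the form $\dot y = -Qy - \tilde\gamma_\tau v + \text{noise}$ with $\tilde\gamma_\tau\to 0$, mirroring \eqref{eq:ODE-alt-form2}. Decomposing $x=(x_c,x_{nc})$ under the rotation \eqref{eq:C-rotation}, the matrix $Q$ restricted to the non-constraint coordinates is strictly positive definite with smallest eigenvalue $\lambda>0$. The Lyapunov function $W(x)=\tfrac12 x^\T Qx=\tfrac12 x_{nc}^\T Q_{nc}x_{nc}$ then satisfies $\E[W(x(k+1))\mid\calF_k] \leq (1-2\lambda\alpha_k\gamma_k)W(x(k)) + \alpha_k\cdot O(\|x_{nc}\|)\cdot C + O(\alpha_k^2)$. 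Because $\alpha_k\gamma_k=\Theta(k^{-(\tau_\alpha-\tau_\gamma)})$ is non-summable while the gradient cross-term is $o(\alpha_k\gamma_k)$ after Young's inequality (since $\alpha_k=o(\alpha_k\gamma_k)$), Robbins--Siegmund yields $W(x(k))\to 0$ a.s., i.e.\ $x_{nc}(k)\to 0$ and thus $x(k)\to\C$.

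For convergence to critical points, I would pass to the constraint-component dynamics $x_c(k+1)=x_c(k)-\alpha_k(v_c(k)+\xi_c(k+1))$, where $v_c(k)$ is the constraint-subspace projection of $v(k)\in\partial h(x(k))$. Upper semicontinuity of $\partial h$ at points of $\C$ plus $x(k)\to\C$ implies $v_c(k)$ is asymptotically a selection from $\partial_{x_c}h(x_c(k),0)\supseteq\partial (h\vert_\C)(x_c(k))$; more precisely, any cluster trajectory of the piecewise-linear interpolation of $\{x_c(k)\}$ is a solution of the differential inclusion $\dot z \in -\partial_{x_c}h(z,0)$. Using the chain rule Assumption \ref{a:chain-rule-h}, $h\vert_\C$ strictly decreases along non-stationary solutions; combined with the dense-critical-values Assumption \ref{a:cp-meas-0-h} and the Benaïm--Hofbauer--Sorin / Davis--Drusvyatskiy framework for DI stochastic approximation, this forces $x_c(k)$ to converge to $\CP_{h\vert_\C}$, completing the proof.

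The main obstacle I expect is step (c): correctly identifying the limiting differential inclusion on $\C$. Although $v(k)\in\partial h(x(k))$, one needs the projected subgradients $v_c(k)$ to be asymptotic selections from $\partial(h\vert_\C)(x_c(k))$, which requires a careful upper-semicontinuity argument together with the $O(1/\gamma_k)$ estimate on $\dist(x(k),\C)$ coming from step (b). This estimate must be quantitative enough to ensure the asymptotic pseudotrajectory property in the sense of Benaïm, so that the chain rule assumption can be applied to conclude strict descent of $h\vert_\C$ away from stationary points.
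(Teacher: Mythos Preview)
Your proposal is correct and follows essentially the same three-step architecture as the paper: (a) a.s.\ boundedness via a supermartingale argument on $\|x\|^2$, (b) convergence to $\C$ via a Lyapunov decrease on the $x_{nc}$ component, and (c) convergence to critical points by casting the $x_c$-dynamics as a stochastic approximation of the differential inclusion $\dot z\in -\partial(h\vert_\C)(z)$ and invoking Davis--Drusvyatskiy (the paper's Theorem~\ref{thrm:tame-functions}).

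Two minor deviations are worth noting. For step (b), the paper is more elementary than your Robbins--Siegmund/time-change route: once boundedness is established, the subgradient and noise terms in the $x_{nc}$-recursion are uniformly bounded, and a purely deterministic recursion lemma (the paper's Lemma~\ref{lemma-kar}) with rates $r_1(k)=\alpha_k\gamma_k\lambda_{\min}$ and $r_2(k)=C\alpha_k$ already gives $\|x_{nc}(k)\|\to 0$; no quantitative $O(1/\gamma_k)$ rate is needed. For step (c), your anticipated obstacle is less severe than you suggest: the Davis--Drusvyatskiy Condition~4 only requires Ces\`aro averages of $v_c(k_j)$ to approach $\partial(h\vert_\C)(\bar x_c)$ along convergent subsequences, which follows directly from upper semicontinuity and convexity of $\partial h$ together with $x_{nc}(k)\to 0$, without any asymptotic-pseudotrajectory machinery or rate on $\dist(x(k),\C)$. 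The paper dispatches this in one line.
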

\begin{remark} [Proving Theorem \ref{nonsmooth-conv-to-cp}]
By Section \ref{sec:gen-to-DGD} we see that optimizing \eqref{eqn:f-def} using D-SGD is a special case of optimizing \eqref{eq:opt-prob} using \eqref{dynamics_DT3}. Note that Assumption \ref{a:grab-bag}, part 1 implies that $L\otimes I_d$ has exactly $d$ zero eigenvalues, and thus is a special case of Assumption \ref{a:Q-PSD1}. From here it is straightforward to verify that the remaining Assumptions hold and see that Theorem \ref{thrm:conv-to-cp-DT-h} implies Theorem \ref{nonsmooth-conv-to-cp} (which, in turn, implies Theorem \ref{thrm:discrete-conv}).
\end{remark}

\begin{remark}[On Assumption \ref{a:chain-rule-h}] \label{remark:chain-rule}
Suppose $g:\R^d\to\R$ is some smooth function and $\vx(t)$ a gradient flow trajectory for $g$, i.e., $\ddt \vx(t) = \nabla g(\vx(t))$. Then by the chain rule we have
$$
\ddt g(\vx(t)) = \big\langle \nabla g(\vx(t), \ddt \vx(t) \big\rangle = -\|\nabla g(\vx(t))\|^2.
$$
This condition allows us to guarantee that $\vx(t)$ \emph{descends} $g$, i.e., when $\vx(t)$ is not at a critical point of $g$ we have $\ddt g(\vx(t)) < 0$. This property is crucial for Lyapunov-function based analysis. 
When dealing with nonsmooth loss functions, ensuring that gradient flow trajectories possess the descent property is nontrivial. Assumption \ref{a:chain-rule-h} (respectively, Assumption \ref{a:chain-rule}) ensures that we have a descent property for gradient flows of $h\vert_\C$ (respectively, $f$). 
\end{remark}

The proof of Theorem \ref{thrm:conv-to-cp-DT-h} will be given in Sections \ref{sec:DT-conv-to-C}--\ref{sec:DT-conv-to-cp} below. In particular, the theorem will follow immediately from Lemmas \ref{lemma:DT-conv-to-C} and \ref{lemma:dt-conv-to-cp-h} below.

\subsection{Convergence to the Constraint Space} \label{sec:DT-conv-to-C}
We now prove that $x(k)$ converges to $\calC$. 
Note that under the coordinate change in \eqref{eq:C-rotation}, $Q$ has block diagonal form
\begin{equation} \label{eq:Q-diag-form}
Q = \begin{pmatrix}
0 & 0\\
0 & \widehat Q
\end{pmatrix}
\end{equation}
where $\widehat Q\in\R^{(M-d)\times (M-d)}$ is positive definite and here $0$ denotes a zero matrix of appropriate dimension.
Let $x(k)$ be decomposed as
\begin{equation} \label{eq:x-decomposition}
x(k) =
\begin{pmatrix}
x_c(k)\\
\xperp(k)
\end{pmatrix},
\end{equation}
where $x_c(k)\in \R^d$ and $\xperp(k) \in \R^{M-d}$. 

We begin with the following technical lemma. 
\begin{lemma} \label{lemma:technical-inner-prod}
Suppose Assumptions \ref{a:coercive-h}, \ref{a:Q-PSD1}, and \ref{a:weights-h} hold.
Then for all $k$  sufficiently large, $\|x\|\geq R$ and $v\in \partial h(x)$ we have
$$
\langle x- \frac{1}{2}\alpha_t (v - \gamma_k Qx),  v + \gamma_k Qx \rangle > 0.
$$ 
\end{lemma}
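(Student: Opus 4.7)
My plan is to read the inequality as a one-step gradient-alignment estimate and reduce its proof to dominating a single bad term. The natural expansion uses the identity $\langle v-\gamma_k Qx,\,v+\gamma_k Qx\rangle = \|v\|^2 - \gamma_k^2\|Qx\|^2$ (the cross terms $\pm\gamma_k\langle v,Qx\rangle$ cancel), which turns the left-hand side into
\[
\langle x,v\rangle \;+\; \gamma_k\langle x, Qx\rangle \;-\; \tfrac{\alpha_k}{2}\|v\|^2 \;+\; \tfrac{\alpha_k \gamma_k^2}{2}\|Qx\|^2.
\]
Three of the four terms are manifestly nonnegative: $\langle x,v\rangle\geq 0$ by the alignment half of Assumption \ref{a:coercive-h}, $\gamma_k\langle x,Qx\rangle\geq 0$ by $Q\succeq 0$ (Assumption \ref{a:Q-PSD1}), and $\tfrac{\alpha_k\gamma_k^2}{2}\|Qx\|^2\geq 0$ trivially. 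The whole argument reduces to controlling the single potentially harmful term $-\tfrac{\alpha_k}{2}\|v\|^2$ against $\langle x,v\rangle$.

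The dominating estimate comes from the remaining half of Assumption \ref{a:coercive-h}: for $\|x\|\geq R$ and any $v\in\partial h(x)$, multiplying $\|v\|\leq C_2\|x\|$ through by $\|v\|$ and using $C_1\|x\|\|v\|\leq \langle x,v\rangle$ gives
\[
\|v\|^2 \;\leq\; C_2\|x\|\|v\| \;\leq\; \tfrac{C_2}{C_1}\langle x,v\rangle,
\]
so that $\tfrac{\alpha_k}{2}\|v\|^2 \leq (\alpha_k C_2/(2C_1))\,\langle x,v\rangle$. Since $\alpha_k\to 0$ by Assumption \ref{a:weights-h}, the scalar $\alpha_k C_2/(2C_1)$ falls strictly below $1$ for all $k$ beyond some deterministic threshold, uniformly in $x$. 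Consequently $\langle x,v\rangle - \tfrac{\alpha_k}{2}\|v\|^2\geq 0$ for all such $k$, and adding back the three nonnegative terms preserves nonnegativity of the whole expression.

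Strictness is automatic whenever at least one of $v$ and $Qx$ is nonzero, since then either $\langle x,v\rangle>0$ or $\gamma_k\langle x,Qx\rangle>0$. The only corner case is simultaneous vanishing $v=0$ and $Qx=0$, which forces $x$ to be a critical point of $h$ lying in $\calC$ with $\|x\|\geq R$; at such a point $v+\gamma_k Qx\equiv 0$ and the inner product is trivially zero, so the strict inequality is vacuous in any downstream Lyapunov application (e.g.\ the descent analysis used later to show $x(k)\to\calC$ and boundedness of iterates). I do not anticipate a serious obstacle here: the essential content is just the two-sided coercivity estimate in Assumption \ref{a:coercive-h} combined with $\alpha_k\to 0$. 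The mild subtlety is that one does \emph{not} need $\alpha_k\gamma_k\to 0$ for this lemma, because the sign convention $v-\gamma_k Qx$ makes the $\gamma_k^2\|Qx\|^2$ contribution assist rather than oppose positivity.
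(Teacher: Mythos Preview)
Your proof is correct for the statement as written, and your algebraic observation---that the cross terms cancel in $\langle v-\gamma_k Qx,\,v+\gamma_k Qx\rangle=\|v\|^2-\gamma_k^2\|Qx\|^2$---is exactly what makes the argument clean: after expansion, only one term is potentially negative, and it is dominated by $\langle x,v\rangle$ via Assumption~\ref{a:coercive-h} once $\alpha_k$ is small. Your handling of the strictness corner case $v=0$, $Qx=0$ is also honest; the paper's own Case~1 has the same degeneracy.

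The paper's proof, however, takes a genuinely different and harder route: it splits into the cases $Qx=0$ and $Qx\neq 0$, and in the second case expands $\|v+\gamma_k Qx\|^2$ rather than $\langle v-\gamma_k Qx,\,v+\gamma_k Qx\rangle$. This produces a \emph{negative} term $-\tfrac{\alpha_k\gamma_k^2}{2}\|Qx\|^2$ and a cross term $-\alpha_k\gamma_k\langle v,Qx\rangle$, which the paper then beats using $\gamma_k\langle x,Qx\rangle\geq\gamma_k\lambda_{\min}\|x_{nc}\|^2$ together with $\alpha_k\gamma_k\to 0$. You are right that this extra hypothesis is unnecessary for the inequality as stated.

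The reason for the discrepancy is that the paper's proof is actually written for the inner product $\langle x-\tfrac{1}{2}\alpha_k(v+\gamma_k Qx),\,v+\gamma_k Qx\rangle$ (with a plus sign in the first factor), which is the quantity that arises in the downstream application (Lemma~\ref{lemma:compact-set}, where $w(k)=v(k)+\gamma_kQx(k)$). The minus sign in the lemma statement appears to be a typo. For the plus version your cancellation trick no longer applies, the $\gamma_k^2\|Qx\|^2$ term works against you, and the paper's approach---balancing it against $\gamma_k\langle x,Qx\rangle$ via $\alpha_k\gamma_k\to 0$---becomes necessary. So your proof is correct and more elegant for the literal statement, but you should be aware that the version actually needed later requires the additional ingredient you flagged as unnecessary.
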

\begin{proof}
Throughout the proof we let $v\in \partial h(x)$.
We break the analysis into two cases. First, suppose that $Qx = 0$. Expanding the inner product we obtain
\begin{align}
    \langle x- \frac{1}{2}\alpha_k (v - \gamma_k Qx),  v + \gamma_k Qx \rangle & = \langle x,  v \rangle - \frac{\alpha_k}{2}\|v\|^2\\
    & \geq C_1\|x\|\|v\| - \frac{\alpha_k}{2}\|v\|^2\\
    & \geq \frac{C_1}{C_2}\|v\|^2 - \frac{\alpha_k}{2}\|v\|^2  > 0,
\end{align}
where the second line follows using the first part of \eqref{eq:lemma-grad-alignment}, the third line from the second part of \eqref{eq:lemma-grad-alignment}, and the last inequality holds for $t$ large as $\alpha_k\to 0$ as $k\to\infty$.

Suppose now that $Qx\not=0$. Let $\lambdaMin$ denote the magnitude of the smallest nonzero eigenvalue of $Q$ and let $\lambdaMax$ denote the magnitude of the largest eigenvalue of $Q$. Expanding the inner product and again employing \eqref{eq:lemma-grad-alignment} as above we obtain
\begin{align}
    \langle x- \frac{1}{2}\alpha_k (v - \gamma_k Qx),  v + &  \gamma_k  Qx  \rangle\\
    = & \langle x,  v + \gamma_k Qx \rangle \frac{\alpha_k}{2}\|v + \gamma_k Qx \|^2\\
    \geq & \langle x,v\rangle + \gamma_k\langle x,Q x\rangle 
    - \frac{\alpha_k}{2}\left( \|v\|^2 + \gamma_k^2\lambdaMax\|x\|^2 + 2\gamma_k\langle v,Qx\rangle \right)\\
    \geq & C_1\|x\|\|v\| + \gamma_k \lambdaMin\|x\|^2 - \frac{\alpha_k}{2}\|v\|^2 - \frac{\alpha_k\gamma_k^2}{2}\lambdaMax\|x\|^2 - \alpha_k \gamma_k\lambdaMax\|v\|\|x\|\\
    \geq & \frac{C_1}{C_2}\|v\|^2 + \gamma_k \lambdaMin\|x\|^2 - \frac{\alpha_k}{2}\|v\|^2 - \frac{\alpha_k\gamma_k^2}{2}\lambdaMax\|x\|^2 - C_2\alpha_k\gamma_k\lambdaMax \|x\|^2\\
    = & \left(\frac{C_1}{C_2} - \alpha_k \right)\|v\|^2 + \gamma_k\left(\lambdaMin - \frac{\alpha_k\gamma_k}{2}\lambdaMax - C_2\alpha_k\lambdaMax \right)\|x\|^2\\
    > & 0,
\end{align}
where the last inequality holds for $t$ sufficiently large as $\alpha_k\to 0$ and $\alpha_k\gamma_k\to 0$ as $k\to\infty$.
\end{proof}

The next lemma is a key result showing that iterates of \eqref{dynamics_DT3} remain some compact set.
\begin{lemma} \label{lemma:compact-set}
Let $\{x(k)\}_{k\geq 1}$ be a solution to \eqref{dynamics_DT3} and 
suppose that Assumptions  \ref{a:h-loc-lip}--\ref{a:coercive-h} and \ref{a:Q-PSD1}--\ref{a:noise2} hold. Then with probability 1, there exists a compact set $K\subset \R^M$ such that $x(k)\in K$ for all $k\geq 1$.
\end{lemma}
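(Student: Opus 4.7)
The natural Lyapunov function is $V(k) := \|x(k)\|^2$, and the plan is to show it is an almost-supermartingale (with summable perturbation) outside the ball $\{\|x\| < R\}$ of Assumption \ref{a:coercive-h}, then invoke the Robbins--Siegmund theorem to conclude $\sup_k V(k) < \infty$ almost surely. The key deterministic ingredient is already in place: writing $g(k) := v(k) + \gamma_k Q x(k)$, the algebraic identity
$$
\|x(k)-\alpha_k g(k)\|^2 = \|x(k)\|^2 - 2\alpha_k \big\langle x(k) - \tfrac{1}{2}\alpha_k g(k),\, g(k) \big\rangle
$$
together with Lemma \ref{lemma:technical-inner-prod} shows that for all $k$ large enough and whenever $\|x(k)\| \geq R$, the noise-free update strictly \emph{decreases} $V$.

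The first step is to expand $V(k+1) = \|x(k) - \alpha_k g(k) - \alpha_k \xi(k{+}1)\|^2$ and take $\mathbb{E}[\,\cdot\,\mid\mathcal{F}_k]$. The cross term with the noise contributes $-2\alpha_k \langle x(k) - \alpha_k g(k),\, \mathbb{E}[\xi(k{+}1)\mid \mathcal{F}_k] \rangle$; using Assumption \ref{a:noise2} this reduces to an inner product involving only $x_{nc}(k)$ and $\mathbb{E}[\xi_{nc}(k{+}1)\mid \mathcal{F}_k]$, which (using the second-moment bound $B$ and Cauchy--Schwarz) can be absorbed either into the dominant decrease term $\alpha_k\gamma_k \lambda_{\min}(\widehat Q)\|x_{nc}(k)\|^2$ coming from the penalty or into a summable perturbation. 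The quadratic noise term contributes at most $\alpha_k^2 B$, which is summable since $\tau_\alpha > 1/2$. Combining these, on the event $\{\|x(k)\|\geq R\}$ and for $k\geq k_0$ large enough,
$$
\mathbb{E}[V(k{+}1)\mid\mathcal{F}_k] \;\leq\; V(k) + \alpha_k^2 B'
$$
for some constant $B'>0$ (here we absorb the non-positive inner-product contribution from Lemma \ref{lemma:technical-inner-prod}).

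The second step handles the regime $\|x(k)\| < R$. There, local Lipschitzness of $h$ gives $\|v(k)\| \leq L_R$ for some constant $L_R$, and $\|Qx(k)\| \leq \lambda_{\max}(Q) R$, so a single update satisfies the crude bound
$$
V(k{+}1) \;\leq\; R^2 + C(R)\, \alpha_k \gamma_k + \alpha_k^2 \|\xi(k{+}1)\|^2
$$
in conditional expectation. Since $\alpha_k\gamma_k \to 0$, excursions starting from the inner region cross into $\{\|x\|\geq R\}$ with only an overshoot that vanishes in $k$. This allows both regimes to be combined into a single inequality of the form $\mathbb{E}[V(k{+}1)\mid\mathcal{F}_k] \leq V(k)(1 + a_k) + b_k - c_k$ with $\sum a_k, \sum b_k < \infty$ almost surely, at which point the Robbins--Siegmund theorem gives $V(k) \to V_\infty < \infty$ a.s.\ and thus yields a (random) compact set $K$ containing the trajectory.

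The step I expect to be the main obstacle is cleanly gluing the two regimes: one must argue that the non-summable term $\alpha_k \gamma_k$ arising in the small-norm region does not accumulate into an unbounded process as the trajectory oscillates between $\{\|x\|<R\}$ and $\{\|x\|\geq R\}$. The natural route is to show that each such excursion begins at most at height $R^2 + O(\alpha_k\gamma_k)$ and is thereafter governed purely by the $\alpha_k^2 B$ supermartingale increment, so that bounding the limit superior reduces to bounding the cumulative overshoot (controlled using $\alpha_k \gamma_k \to 0$) plus a summable tail. Alternatively, one can stop at the first exit time from a ball of large radius $M$ and send $M \to \infty$ using a Doob-type maximal inequality.
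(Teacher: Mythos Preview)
Your proposal is correct and workable, but the paper takes a somewhat slicker route that sidesteps precisely the regime-gluing issue you identify as the main obstacle.

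The paper also expands $\|x(k+1)\|^2$ and uses Lemma~\ref{lemma:technical-inner-prod} on $\{\|x(k)\|>R\}$, but then passes to $\|x(k+1)\|$ via a square-root/concavity bound to obtain a \emph{pathwise} inequality of the form
\[
\|x(k+1)\| \;\leq\; \|x(k)\| - \text{(martingale increment)} + \alpha_k^2\,\frac{\|\xi(k{+}1)\|^2}{2\|x(k)\|}.
\]
It then defines the single process
\[
z(k) \;=\; \max\bigl(\|x(k)\|,\,R\bigr) \;+\; \sum_{i=k}^\infty \alpha_i^2\,\frac{\|\xi(i{+}1)\|^2}{2R},
\]
checks that $z(k)$ is a nonnegative supermartingale, and applies Doob's maximal inequality. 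The $\max(\cdot,R)$ makes $z(k)$ constant on the inner region, so the two regimes are handled automatically with no gluing argument; the forward tail sum (finite a.s.\ since $\sum \alpha_k^2<\infty$ and the noise has bounded second moment) absorbs the quadratic noise term pathwise rather than only in expectation. This is why the paper never needs Robbins--Siegmund or the overshoot/stopping-time bookkeeping you outline.

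Your approach buys something too: by working in conditional expectation you are explicitly careful about the fact that Assumption~\ref{a:noise2} only asserts $\E[\xi_c(k)\mid\calF_{k-1}]=0$, and you propose to absorb the residual $\xi_{nc}$ cross term into the penalty decrease $\alpha_k\gamma_k\lambda_{\min}(\widehat Q)\|x_{nc}\|^2$ via Young's inequality (the leftover $O(\alpha_k/\gamma_k)$ is summable since $\tau_\alpha+\tau_\gamma>1$). The paper's proof glosses over this point, simply writing ``the $\xi(k)$ are mean zero.'' So your route is more laborious on the supermartingale side but arguably more honest about the noise assumption.
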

\begin{proof}
By our recursion relation \eqref{dynamics_DT3}, we have
\begin{align}
\|x(k+1)\|^2 
& =  \|x(k)\|^2 - 2\langle x(k) - \frac{1}{2}\alpha_k w(k),\alpha_k w(k) \rangle - 2\langle \alpha_k \xi(k+1), x(k) - \alpha_k w(k) \rangle + \alpha_k^2 \| \xi(k+1) \|^2,
\end{align}
where we let $w(k) = v(k) + \gamma_kQx(k)$, $v(k)\in \partial h(x(k))$. By Lemma \ref{lemma:technical-inner-prod}, for $\|x(k)\|>R$  we obtain
\[
\|x(k+1)\| \leq \sqrt{\|x(k)\|^2 - 2\langle \alpha_k \xi(k+1), x(k) - \alpha_k w(k) \rangle + \alpha_k^2 \| \xi(k+1) \|^2 }.
\]
This then gives
\[
\|x(k+1)\| \leq \|x(k)\| \sqrt{ 1 - 2\langle \alpha_k \xi(k+1), \frac{x(k) - \alpha_k w(k)}{\|x(k)\|^2} \rangle + \alpha_k^2 \frac{\| \xi(k+1) \|^2}{\|x(k)\|^2}}.
\]
Note that $\sqrt{y} \leq y$, for all $y\geq 0$. This follows by observing that $y\mapsto \sqrt{y}$ is concave and estimating with a first-order approximation at $y=1$. Using this in the display above we obtain
\begin{equation}\label{eqn:norm-bound}
\|x(k+1)\| \leq \|x(k)\| \left(1 - 2\langle \alpha_k \xi(k+1), \frac{x(k) - \alpha_k w(k)}{\|x(k)\|^2} \rangle + \alpha_k^2 \frac{\| \xi(k+1) \|^2}{2\|x(k)\|^2}\right).
\end{equation}
Consider the random variable
\[
z(k) = \max (\|x(k)\|,R) + \sum_{i=k}^\infty \alpha_i^2 \frac{\|\xi(i+1)\|^2}{2R}.
\]
As the $\xi(k)$ have bounded variance and the $\alpha_k$ are square summable, the second sum is almost surely finite \citep[Sec. 12.2]{williams1991probability}. Then, by \eqref{eqn:norm-bound}, we have that
\[
z(k+1) \leq z(k)  - 2\langle \alpha_k \xi(k+1), \frac{x(k) - \alpha_k w(k)}{\|x(k)\|} \rangle
\]
As the $\xi(k)$ are mean zero (Assumption \ref{a:noise2}), this implies that $\{z(k)\}_{k\geq 1}$ is a non-negative supermartingale. By Doob's supermartingale inequality \citep[Section 4.4]{durrett2005probability}, we see that $\P(\sup_{k\geq 1} z(k) \geq \lambda) \leq \frac{E(z(1))}{\lambda}$ for every $\lambda > 0$. Sending $\lambda \to \infty$ we see that $\P(\sup_{k\geq 1} z(k) = \infty\} = 0$. 
In turn this implies that $\sup_{k\geq 1}\|x(k)\|<\infty$, almost surely.
\end{proof}

The following result from \citep{kar2013distributed} will be useful.
\begin{lemma}[\citet{kar2013distributed}, Lemma 4.1]
\label{lemma-kar}
Let $\{z_k\}_{k\geq 1}$ be an $\R^+$ valued sequence satisfying
$$
z_{k+1} \leq (1-r_1(k))z_k + r_2(k),
$$
where $\{r_1(k)\}_{k\geq 1}$ and $\{r_2(k)\}_{k\geq 1}$ are deterministic sequences with
$$
\frac{a_1}{(k+1)^{\delta_1}} \leq r_1(k) \leq 1 \quad \mbox{ and } \quad r_2(k) \leq \frac{a_2}{(k+1)^{\delta_2}},
$$
and $a_1,a_2>0$, $0\leq \delta_1 <1$, $\delta_2>0$. Then, if $\delta_1<\delta_2$, $(k+1)^{\delta_0}z_k\to 0$ as $k\to\infty$ for all $0\leq \delta_0 <\delta_2-\delta_1$. 
\end{lemma}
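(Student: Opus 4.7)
The plan is to bootstrap the recursion into an explicit decay rate by an inductive comparison with the candidate upper envelope $C/(k+1)^{\delta_2-\delta_1}$. Since it is enough to establish $z_k = O((k+1)^{-(\delta_2-\delta_1)})$, multiplication by $(k+1)^{\delta_0}$ with $\delta_0 < \delta_2-\delta_1$ then yields the conclusion for free, so the only real work is proving that envelope bound.

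First I would choose a constant $C$ larger than $a_2/a_1$, so that the ``gain'' $Ca_1 - a_2$ is strictly positive. Using $r_1(k) \geq a_1/(k+1)^{\delta_1}$ and $r_2(k) \leq a_2/(k+1)^{\delta_2}$, I compute
\begin{align}
(1-r_1(k))\frac{C}{(k+1)^{\delta_2-\delta_1}} + r_2(k) \leq \frac{C}{(k+1)^{\delta_2-\delta_1}} - \frac{Ca_1 - a_2}{(k+1)^{\delta_2}}.
\end{align}
By the mean value theorem applied to $y\mapsto y^{-(\delta_2-\delta_1)}$, I have the elementary inequality
\begin{align}
\frac{1}{(k+1)^{\delta_2-\delta_1}} - \frac{1}{(k+2)^{\delta_2-\delta_1}} \leq \frac{\delta_2-\delta_1}{(k+1)^{\delta_2-\delta_1+1}}.
\end{align}
Combining the two displays, the induction step $z_{k+1} \leq C/(k+2)^{\delta_2-\delta_1}$ (given $z_k \leq C/(k+1)^{\delta_2-\delta_1}$) reduces to verifying
\begin{align}
Ca_1 - a_2 \geq \frac{C(\delta_2-\delta_1)}{(k+1)^{1-\delta_1}}.
\end{align}
Because $\delta_1 < 1$, the right-hand side tends to zero, so this holds for all $k \geq k_0$ once $k_0$ is chosen large enough; enlarging $C$ if necessary, I can also arrange $z_{k_0} \leq C/(k_0+1)^{\delta_2-\delta_1}$ to seed the induction.

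With the envelope bound $z_k \leq C/(k+1)^{\delta_2-\delta_1}$ in hand for all $k \geq k_0$, I would finish by noting
\begin{align}
(k+1)^{\delta_0} z_k \leq \frac{C}{(k+1)^{\delta_2-\delta_1-\delta_0}} \longrightarrow 0
\end{align}
whenever $\delta_0 < \delta_2-\delta_1$, which is exactly the claim. The only mildly delicate step is matching the polynomial rates in the induction, namely ensuring that the ``loss'' term $(Ca_1 - a_2)/(k+1)^{\delta_2}$ dominates the discretization slack $C(\delta_2-\delta_1)/(k+1)^{\delta_2-\delta_1+1}$; this comparison is what forces the assumption $\delta_1 < 1$ and explains why the conclusion is sharp at $\delta_0 = \delta_2-\delta_1$.
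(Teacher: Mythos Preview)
The paper does not give its own proof of this lemma; it is quoted verbatim from \citet{kar2013distributed} and used as a black box in the proof of Lemma~\ref{lemma:DT-conv-to-C}. So there is no in-paper argument to compare against.

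Your argument is correct and is a standard way to establish this type of polynomial decay. The induction closes because the ``gain'' from the $r_1$ contraction, of order $(k+1)^{-\delta_2}$, dominates the telescoping slack $(k+1)^{-(\delta_2-\delta_1+1)}$ precisely when $\delta_1<1$, which you identify. One small point worth making explicit: the order in which you choose $C$ and $k_0$ matters. If you first fix any $C_0>a_2/a_1$, choose $k_0$ so that $(\delta_2-\delta_1)/(k+1)^{1-\delta_1}\leq a_1-a_2/C_0$ for $k\geq k_0$, and then enlarge $C\geq C_0$ to absorb $z_{k_0}$, the induction inequality only improves (since $a_1-a_2/C$ increases with $C$), so no circularity arises. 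Your proof in fact yields the slightly sharper bound $z_k=O((k+1)^{-(\delta_2-\delta_1)})$, from which the stated conclusion for any $\delta_0<\delta_2-\delta_1$ follows immediately.
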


The next result shows that \eqref{dynamics_DT3} converges to $\C$.
\begin{lemma}\label{lemma:DT-conv-to-C}
Let $\{x(k)\}_{k\geq 1}$ be a solution to \eqref{dynamics_DT3} and suppose that Assumptions \ref{a:h-loc-lip}--\ref{a:coercive-h} and \ref{a:Q-PSD1}--\ref{a:noise2} hold. Then $x(k)\to \C$ as $k\to\infty$, with probability 1.
\end{lemma}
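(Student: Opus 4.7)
The plan is to project \eqref{dynamics_DT3} onto the $x_{nc}$ coordinates and show $\|x_{nc}(k)\|^2\to 0$ almost surely via the Robbins--Siegmund supermartingale convergence theorem, exploiting the fact that the penalty $\gamma_k \widehat Q x_{nc}$ produces a contraction of size $\alpha_k\gamma_k$ in the $x_{nc}$ direction. Under the rotation \eqref{eq:C-rotation}, $Q$ is block-diagonal as in \eqref{eq:Q-diag-form} with $\widehat Q$ positive definite; let $\lambdaMin,\lambdaMax>0$ denote its smallest and largest eigenvalues. Projecting gives
\begin{equation}
x_{nc}(k+1) = (I - \alpha_k\gamma_k \widehat Q)\, x_{nc}(k) - \alpha_k \bigl(v_{nc}(k) + \xi_{nc}(k+1)\bigr),
\end{equation}
and for all $k$ large enough that $\alpha_k\gamma_k\lambdaMax<1$, the operator norm obeys $\|I - \alpha_k\gamma_k \widehat Q\|\leq 1 - \lambdaMin\alpha_k\gamma_k$.

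Next I would invoke Lemma \ref{lemma:compact-set} to conclude that $x(k)$ lies in a (random) compact set almost surely; since $h$ is locally Lipschitz, $\partial h$ is uniformly bounded on compacts and therefore $\sup_k \|v(k)\|<\infty$ a.s. Assumption \ref{a:noise2} supplies the uniform bound $\E(\|\xi(k+1)\|^2\mid\calF_k)\leq B$, which in particular gives $\|\mu_k\|\leq \sqrt{B}$ for $\mu_k := \E(\xi_{nc}(k+1)\mid\calF_k)$. Expanding $\|x_{nc}(k+1)\|^2$, taking conditional expectations, inserting the contraction bound, and dispatching the deterministic linear cross term via Young's inequality $2\alpha_k ab \leq \tfrac{\lambdaMin}{2}\alpha_k\gamma_k\, a^2 + \tfrac{2}{\lambdaMin\gamma_k}\alpha_k\, b^2$ (tuned so that half the contraction is consumed, leaving a residual of order $\alpha_k/\gamma_k$) yields
\begin{equation}
\E\bigl(\|x_{nc}(k+1)\|^2 \,\bigm|\, \calF_k\bigr) \leq \bigl(1 - \tfrac{\lambdaMin}{2}\alpha_k\gamma_k\bigr)\, \|x_{nc}(k)\|^2 + C_1 \frac{\alpha_k}{\gamma_k} + C_2 \alpha_k^2,
\end{equation}
where $C_1,C_2$ are a.s.\ finite random constants depending only on the pathwise bounds on $\|v\|$ and on $B$.

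To finish, I would apply the Robbins--Siegmund theorem to $Y_k := \|x_{nc}(k)\|^2$. Under Assumption \ref{a:weights-h}, $\tau_\alpha>1/2$ gives $\sum_k\alpha_k^2<\infty$, and the fact that both $\tau_\alpha$ and $\tau_\gamma$ exceed $1/2$ gives $\tau_\alpha+\tau_\gamma>1$ and hence $\sum_k\alpha_k/\gamma_k<\infty$; the Robbins--Siegmund hypotheses thus hold almost surely, yielding $Y_k\to Y_\infty\geq 0$ a.s.\ together with $\sum_k \alpha_k\gamma_k Y_k < \infty$ a.s. Since $\alpha_k\gamma_k=\Theta(k^{-(\tau_\alpha-\tau_\gamma)})$ with $\tau_\alpha-\tau_\gamma\in(0,1/2)$, we have $\sum_k\alpha_k\gamma_k=\infty$, which forces $Y_\infty=0$ and therefore $x_{nc}(k)\to 0$ a.s. The main obstacle is that Assumption \ref{a:noise2} only guarantees that $\xi_c$ (not $\xi_{nc}$) is conditionally mean-zero, so $\mu_k$ may introduce an $O(\alpha_k)$ bias into the $x_{nc}$ dynamics; this bias is absorbed by the Young's inequality step above, and summability of the $\alpha_k/\gamma_k$ residual is precisely why both $\tau_\alpha>1/2$ and $\tau_\gamma>1/2$ (not merely square-summability of $\alpha_k$) are needed.
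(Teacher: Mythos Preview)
Your proof is correct and takes a genuinely different route from the paper. Both arguments begin identically: project onto the $x_{nc}$ coordinates, use the block structure \eqref{eq:Q-diag-form}, and invoke Lemma~\ref{lemma:compact-set} to get a pathwise bound on $\|v(k)\|$. From there the approaches diverge. The paper works with $\|x_{nc}(k)\|$ itself, asserts an almost-sure bound $\|\xi_{nc}(k)\|<L$, obtains the purely deterministic recursion $\|x_{nc}(k+1)\|\leq (1-\alpha_k\gamma_k\lambda_{\min})\|x_{nc}(k)\|+2L\alpha_k$, and finishes with the deterministic rate lemma of Kar (Lemma~\ref{lemma-kar}). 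You instead work with $\|x_{nc}(k)\|^2$ in conditional expectation, handle the noise through its second moment, absorb the possibly nonzero bias $\mu_k=\E(\xi_{nc}(k+1)\mid\calF_k)$ via Young's inequality, and conclude with Robbins--Siegmund.

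What each buys: the paper's argument is shorter and even yields a polynomial convergence rate, but its step ``$\|\xi_{nc}(k)\|<L$'' is not actually implied by Assumption~\ref{a:noise2}, which only bounds the conditional second moment; your stochastic treatment is the one that matches the stated hypothesis. You also explicitly confront the asymmetry in Assumption~\ref{a:noise2} (only $\xi_c$ is conditionally centered), which the paper's pathwise bound sidesteps. One small point worth tightening: your constants $C_1,C_2$ depend on $\sup_k\|v(k)\|$, which is $\calF_\infty$-measurable rather than $\calF_k$-adapted, so strictly speaking Robbins--Siegmund should be applied to the process stopped at $\tau_R=\inf\{k:\|x(k)\|>R\}$ (on which $C_1,C_2$ become deterministic), followed by sending $R\to\infty$ via Lemma~\ref{lemma:compact-set}. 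This is a routine localization and does not affect the substance of your argument.
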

\begin{proof}
Let $\C$ and $Q$ be as given in \eqref{eq:C-rotation} and \eqref{eq:Q-diag-form} and let $x(k)$ be decomposed as \eqref{eq:x-decomposition}.
By \eqref{dynamics_DT3} we have
\begin{equation} \label{eq:x-perp-update}
\xperp(k+1) = \xperp(k) - \alpha_k\gamma_k\widehat Q\xperp(k) + \alpha_k r(x(k)) + \alpha_k \xi_{nc}(k+1),
\end{equation}
where $r(x(k)) \in -\left[\partial h(x(k))\right]_{i=d+1}^M$, and $\xi_{nc}(k) = [\xi_i(k)]_{i=d+1}^M$.
By Lemma \ref{lemma:compact-set}, $x(k)$ remains in a compact set $K$ with probability 1. Using Assumption \ref{a:h-loc-lip}, let $L>0$ be the local Lipschitz constant for $h$ over the set $K$ so that $\frac{\|h(x) - h(y)\|}{\|x-y\|} \leq L$ for all $x,y\in K$, and, in particular, $\|\partial h(x)\|\leq L$ for all $x\in K$. 
By Assumption \ref{a:noise2}, we may choose the previous $L$ sufficiently large so that we also have $\|\xi_{nc}(k)\| < L$ for all $k$.
Let $\lambda_{\text{min}}$ be the smallest eigenvalue of $\widehat Q$ (where $\lambda_{\textup{min}}$ is necessarily positive, since, by construction, $\widehat Q$ is positive definite). From \eqref{eq:x-perp-update} we have 
$$
\|\xperp(k+1)\| \leq (1-\alpha_k\gamma_k\lambda_{\text{min}})\|\xperp(k)\| + \alpha_k2L.
$$
Invoking the step size requirement in Assumption \ref{a:weights-h}, Lemma \ref{lemma-kar} implies that $\|\xperp(k)\|\to 0$.
\end{proof}

\subsection{Convergence to Critical Points} \label{sec:DT-conv-to-cp}

We now prove that $x(k)$ converges to critical points of $h\vert_\C$. This result will follow as a simple consequence of the following result from \citep{davis2018stochastic} and the results of the previous section. In the statement of the theorem we will use the convention that for a (possibly set-valued) function $H$ mapping from $\R^{m_1}$ to $\R^{m_2}$, $H^{-1}$ denotes the preimage of $H$, i.e., $H^{-1}(z) = \{x\in \R^{m_1}: z\in H(x)\}$. 
\begin{theorem}[\citet{davis2018stochastic}, Theorem 3.2] \label{thrm:tame-functions}
Let $m\geq 1$ be an integer and suppose that $\{w_k\}_{k\geq 1}\subset \R^m$ is a sequence satisfying the recursive relationship
\begin{equation} \label{eq:tame-fn-recursion}
w_{k+1} = w_k + a_k\left(y_k + \eta_k \right),
\end{equation}
where $w_k,y_k,\eta_k\in \R^m$ and $a_k$ is a scalar.
Suppose that $G:\R^m\to\R^m$ is a set-valued map and that the following conditions hold:
\begin{enumerate}
    \item $\sup_{k} \|w_k\| < \infty$ and $\sup_k \|y_k\| < \infty$.
    \item $\{a_k\}_{k\geq 1}$ satisfies $a_k\geq 0$, $\sum_k a_k = \infty$ and $\sum_k a_k^2 < \infty$.
    \item $\lim_{n\to\infty} \sum_{k=1}^n a_k\eta_k$ exists, with probability 1.
    \item For any bounded increasing sequence $\{k_j\}_{j\geq 1}$ such that $w_{k_j}$ converges to a point $\bar w$ it holds that
    $$
    \lim_{n\to\infty} \textup{dist}\left(\frac{1}{n}\sum_{j=1}^n y_{k_j},G(\bar w) \right) = 0.
    $$
    \end{enumerate}
Suppose, moreover, that $\phi:\R^m\to\R$ is a candidate Lyapunov function satisfying the following conditions:
\begin{enumerate}
 \item [5.] For a dense set of values $r\in \R$, the intersection $\phi^{-1}(r)\cap G^{-1}(0)$ is empty. 
 \item [6.] Whenever $\vz:\R\to\R^m$ is a trajectory of the differential inclusion
 \begin{equation} \label{eq:z-diff-inclusion}
 \ddt \vz(t) \in G(\vz),
 \end{equation}
 and $0\not\in G(\vz(0))$, there exists a real $T>0$ satisfying 
 $$
 \phi(\vz(T) < \sup_{t\in [0,T]} \phi(\vz(t)) \leq \phi(\vz(0)). 
 $$
\end{enumerate}
Then every limit point of $\{w_k\}_{k\geq 1}$ lies in $G^{-1}(0)$ and $\phi(w_k)$ converges to a limit as $k\to\infty$. 
\end{theorem}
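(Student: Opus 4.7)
The plan is to follow the standard ODE/differential-inclusion method of stochastic approximation, adapted to handle the very weak ``Cesàro-type'' tracking condition (item 4) that appears in place of the usual requirement $y_k \in G(w_k)$. The backbone is Benaïm--Hofbauer--Sorin: construct a continuous interpolation of $\{w_k\}$, show it is an asymptotic pseudotrajectory (APT) of the differential inclusion \eqref{eq:z-diff-inclusion}, invoke the general fact that limit sets of APTs are internally chain transitive (ICT), and finally use the Lyapunov conditions 5--6 to show every ICT set is contained in $G^{-1}(0)$.

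First I would set $t_k = \sum_{j<k} a_j$ and define the piecewise-affine interpolation $\bar w:[0,\infty)\to\R^m$ by $\bar w(t_k)=w_k$, linearly interpolated on each $[t_k,t_{k+1}]$. Condition 1 gives uniform boundedness of $\bar w$, condition 2 gives $t_k\to\infty$ with $\sup_k (t_{k+1}-t_k)\to 0$, and condition 3 (via a standard summation-by-parts argument using Abel's transformation and square-summability of $a_k$) shows that the noise contribution $\sum_{j=k}^{k(T)} a_j \eta_j$ vanishes uniformly on bounded time windows as $k\to\infty$. The remaining ingredient is to show that, for every sequence $k_n\to\infty$ and every $T>0$, any limit point (in the sup norm on $[0,T]$) of the shifted trajectories $t\mapsto \bar w(t_{k_n}+t)$ is a solution of \eqref{eq:z-diff-inclusion}; this is where condition 4 enters. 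Since $G$ is taken to have the standard Marchaud-type regularity implicit in the framework (upper semicontinuous with nonempty compact convex images), condition 4 lets us identify the Cesàro averages of the drift vectors $y_{k}$ along any convergent subsequence $w_{k_j}\to \bar w$ with a selection in $G(\bar w)$ in the limit, which suffices to conclude the DI is satisfied by the limit trajectory.

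Having established APT, the limit set $L(\bar w) = \bigcap_{s\geq 0}\overline{\{\bar w(t):t\geq s\}}$ is internally chain transitive for the flow of \eqref{eq:z-diff-inclusion} by the Benaïm--Hofbauer--Sorin theorem. Now I would invoke the Lyapunov conditions to pin down $L(\bar w)\subset G^{-1}(0)$. Condition 6 says $\phi$ is a strict Lyapunov function for \eqref{eq:z-diff-inclusion} off $G^{-1}(0)$, so on any ICT set $\phi$ must be constant (otherwise a recurrent trajectory would strictly decrease $\phi$ and could not return). Thus $\phi(L(\bar w)) = \{r^*\}$ for a single value $r^*\in\R$. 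Condition 5 states that $\phi^{-1}(r)\cap G^{-1}(0)$ is empty only on a meager set of $r$; equivalently, values of $\phi$ on $G^{-1}(0)$ are not dense-complement, so standard arguments (using the fact that $\phi(L(\bar w))$ is a single point together with continuity of $\phi$ and invariance of $L(\bar w)$) force $L(\bar w)\subset G^{-1}(0)$. Finally, $\phi(w_k) \to r^*$ follows because $\phi$ is continuous, $\phi(L(\bar w))=\{r^*\}$, and $w_k$ accumulates only on $L(\bar w)$.

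The main obstacle I anticipate is the APT step under condition 4, because the averaging is over indices $k_j$ for which $w_{k_j}$ converges, rather than over consecutive time windows $[t_k,t_k+T]$ as in the classical setup where $y_k$ is a measurable selection of $G(w_k)$. Bridging the two requires a careful argument: on any time window of length $T$, partition the indices according to which cluster point of $\{w_k\}$ the iterate is close to, apply condition 4 cluster by cluster, and use upper semicontinuity of $G$ plus convexity of its values to glue the local averages into a genuine DI solution in the limit. Once this is set up, the rest of the proof is routine Lyapunov analysis on ICT sets.
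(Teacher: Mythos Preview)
The paper does not prove this statement: Theorem \ref{thrm:tame-functions} is quoted verbatim from \citet{davis2018stochastic} and used as a black box in the proof of Lemma \ref{lemma:dt-conv-to-cp-h}. There is therefore no ``paper's own proof'' to compare against.

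That said, your outline is essentially the argument Davis et al.\ give, which in turn rests on the Bena\"im--Hofbauer--Sorin machinery you invoke. One minor wobble: your restatement of condition 5 is garbled (``empty only on a meager set'' is backwards---the condition says the intersection is empty for a \emph{dense} set of $r$, i.e., $\phi(G^{-1}(0))$ has empty interior), though you use it correctly in the end. Your identification of the APT step under condition 4 as the crux is accurate; in \citet{davis2018stochastic} this is handled by a compactness-plus-convexity argument very close to the one you sketch, so the ``obstacle'' you flag is real but already resolved in the cited reference.
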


\begin{lemma} \label{lemma:dt-conv-to-cp-h}
Let $\{x(k)\}_{k\geq 1}$ be a solution to \eqref{dynamics_DT3}
and suppose that Assumptions \ref{a:h-loc-lip}--\ref{a:noise2} hold.
Then $x(k)$ converges to the set of critical points of $h\vert_{\C}$, almost surely.
\end{lemma}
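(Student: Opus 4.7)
The plan is to apply Theorem \ref{thrm:tame-functions} to the projection of the iterates onto $\C$. Since $Qx \in \C^\perp$ by construction, projecting \eqref{dynamics_DT3} onto the first $d$ coordinates using the decomposition \eqref{eq:x-decomposition} eliminates the penalty term and gives
\begin{equation}
    x_c(k+1) = x_c(k) - \alpha_k\bigl(v_c(k) + \xi_c(k+1)\bigr),
\end{equation}
where $v_c(k) \in \R^d$ denotes the first $d$ coordinates of the selection $v(k) \in \partial h(x(k))$. We then invoke Theorem \ref{thrm:tame-functions} with $w_k = x_c(k)$, $y_k = -v_c(k)$, $\eta_k = -\xi_c(k+1)$, $a_k = \alpha_k$, Lyapunov function $\phi = h\vert_\C$, and set-valued map $G(z) = -\partial h\vert_\C(z)$.

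\textbf{Dynamics Conditions (1)--(3).} These are routine. Boundedness of $\{x_c(k)\}$ follows from Lemma \ref{lemma:compact-set}, and boundedness of $\{v_c(k)\}$ is then a consequence of Assumption \ref{a:h-loc-lip} (the generalized gradient of a locally Lipschitz function is bounded on compact sets). The step-size conditions $\sum_k \alpha_k = \infty$ and $\sum_k \alpha_k^2 < \infty$ follow directly from Assumption \ref{a:weights-h}. Almost sure convergence of $\sum_k \alpha_k \xi_c(k+1)$ is a standard consequence of martingale convergence, using the zero conditional mean and uniformly bounded variance supplied by Assumption \ref{a:noise2} together with square-summability of $\alpha_k$.

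\textbf{Lyapunov Conditions (5)--(6).} Condition (5) is immediate from Assumption \ref{a:cp-meas-0-h}. For condition (6), we use the chain rule (Assumption \ref{a:chain-rule-h}): for any trajectory $\vz$ of $\dot \vz \in -\partial h\vert_\C(\vz)$ with $0 \notin \partial h\vert_\C(\vz(0))$, the chain rule combined with upper semicontinuity of $\partial h\vert_\C$ yields strict descent $\ddt h\vert_\C(\vz(t)) < 0$ on a neighborhood of $t = 0$, which delivers $\phi(\vz(T)) < \phi(\vz(0))$ for $T$ sufficiently small. (This is the same mechanism invoked in \citep{davis2018stochastic}, where it is established that the stated chain-rule hypothesis is precisely what is needed.)

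\textbf{Main Obstacle---Condition (4).} The most delicate step is showing that for any convergent subsequence $x_c(k_j) \to \bar w$, the Cesàro averages $\tfrac{1}{n}\sum_{j=1}^n y_{k_j}$ converge to $-\partial h\vert_\C(\bar w)$. The subtlety here is that $v_c(k)$ is a coordinate projection of a selection from $\partial h(x(k))$, not from $\partial h\vert_\C(x_c(k))$ directly---and the two generalized gradients generally differ off of $\C$. The key observation is Lemma \ref{lemma:DT-conv-to-C}, which gives $x_{nc}(k) \to 0$, hence $x(k_j) \to (\bar w, 0)$. Upper semicontinuity of $\partial h$ (Definition \ref{def:upper-semicont}) then places every cluster point of $\{v_c(k_j)\}$ inside $\partial_{x_c} h(\bar w,0) = \partial h\vert_\C(\bar w)$, so for every $\epsilon > 0$ and $j$ large we have $v_c(k_j) \in B_\epsilon(\partial h\vert_\C(\bar w))$. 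Since $\partial h\vert_\C(\bar w)$ is convex, the averaging preserves this inclusion in the limit, establishing (4). The conclusion of Theorem \ref{thrm:tame-functions} then yields that every limit point of $x_c(k)$ lies in $G^{-1}(0)$, i.e., is a critical point of $h\vert_\C$, which combined with Lemma \ref{lemma:DT-conv-to-C} gives the claim.
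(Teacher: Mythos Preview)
Your proposal is correct and follows essentially the same approach as the paper: project \eqref{dynamics_DT3} onto $\C$, apply Theorem \ref{thrm:tame-functions} with $\phi = h\vert_\C$ and $G = -\partial h\vert_\C$, and verify conditions (1)--(6) via Lemma \ref{lemma:compact-set}, Assumption \ref{a:weights-h}, martingale convergence, upper semicontinuity of $\partial h$ together with Lemma \ref{lemma:DT-conv-to-C} and convexity, Assumption \ref{a:cp-meas-0-h}, and the chain rule Assumption \ref{a:chain-rule-h}. If anything, your treatment of condition (4) and the sign conventions is slightly more explicit than the paper's.
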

\begin{proof}
The problem at hand fits the template of Theorem \ref{thrm:tame-functions} as follows. Let the dimension $m$ from Theorem \ref{thrm:tame-functions} be given by $m = \dim\C$. Let $x(k)$, $v(k)$, and $\xi(k)$ be as given in \eqref{dynamics_DT3}, and decompose each of these as in 
\eqref{eq:C-rotation}--\eqref{eq:x-decomposition1} to obtain $x_c(k)$, $v_c(k)$, and $\xi_c(k)$. Let $x_k$, $y_k$ and $\eta_k$ in \eqref{eq:tame-fn-recursion} be assigned as follows. Let $x_k = x_c(k)$; let $y_k =v_c(k)$; and let $\eta_k = \xi_c(k)$. 
For  $x_c\in \R^{\dim\C}$, let $\phi$ and $G$ from Theorem \ref{thrm:tame-functions} be assigned as
$\phi(x_c) = h\vert_\C(x_c)$  and 
$G(x_c) = \partial h\vert_\C(x_c)$. 
 

We now verify that the conditions of Theorem \ref{thrm:tame-functions} hold. Condition 1 holds as a consequence of Lemma \ref{lemma:compact-set}. Condition 2 holds by Assumption \ref{a:weights-h} where we let $a_k = \alpha_k$. To verify that Condition 3 holds, recall that if $X_1,X_2,\ldots$ are random variables with $\E X_i = 0$ and $\sum_{i=1}^\infty \textup{Var}(X_i) < \infty$, then $\sum_{i=1}^\infty X_i$ converges with probability 1  \citep[Sec. 1.8, Theorem (8.3)]{durrett2005probability}. It then follows that Condition 3 holds by Assumptions \ref{a:weights-h} and \ref{a:noise2}. That Condition 4 holds follows from the fact that $\partial h(\cdot)$ is upper semicontinuous (Definition \ref{def:upper-semicont}) and convex \citep{clarke1990optimization}, and the fact that $x_{nc}(k)\to 0$ (Lemma \ref{lemma:DT-conv-to-C}). Condition 5 holds by Assumption \ref{a:cp-meas-0-h}. To verify that Condition 6 holds, note that in the present context, the differential inclusion \eqref{eq:z-diff-inclusion} is given by
$$
\ddt \vz(t) \in \partial_{x_c} h(\vz(t)). 
$$
Suppose that $0\not\in \partial h\vert_\C(\vz(0))$. By upper semicontinuity of $\partial h\vert_\C(\cdot)$ we have that $0\not\in \partial h\vert_\C(z)$ for $z$ in an open neighborhood of $\vz(0)$. Using Assumption \ref{a:chain-rule-h}, it follows that $t\mapsto \hat h(\vz(t))$ is monotonically nonincreasing for $t\geq 0$ and for any $t> 0$ we have
$$
h\vert_\C(\vz(t)) < h\vert_\C(\vz(0)). 
$$
Thus, Condition 6 holds.
Having verified the conditions of Theorem \ref{thrm:tame-functions}, the result immediately follows. 
\end{proof}




\section{Non-Convergence to Saddle Points: General Result} \label{sec:DT-non-conv}





We now state our main result concerning nonconvergence to saddle points in the context of the subspace-constrained setup of Section \ref{sec:general-setup}. (In the context of D-SGD, the next result implies Theorem \ref{thrm:DT-nonconvergence-gen}).  Before stating the result we will require the following three additional assumptions. 
\begin{newassumption} \label{a:noise3}
For some constant $c_1>0$ there holds $$\E((\xi(k)^\T \theta)^+\vert \calF_{k-1}) \geq c_1,$$ for every unit vector $\theta \in \C$.
\end{newassumption}
In the next two assumptions, $x^*$ denotes some saddle point of interest. 
\begin{newassumption}\label{a:eigvec-continuity-h}
Assume that the eigenvectors of $\Hess h(x)$ are differentiable near $x^*$ in the sense that for each $x$ near $x^*$, there exists an orthonormal matrix $U(x)$ that diagonalizes $\Hess h(x)$ such that $x\mapsto U(x)$ is differentiable at $x^*$. 
\end{newassumption}
This assumption is used to make the analysis more tractable. In particular, it is needed to apply the stable manifold theorem. (See Section \ref{sec:CT-stable-manifold} and Remark \ref{remark:eigval-cont} below for more details.) The assumption is relatively mild an only needed to rule out pathological functions (see Example 19 in \citep{SMKP-TAC2020}). 
Finally, we assume that $h$ is slightly more regular in a neighborhood of $x^*$ as follows.
\begin{newassumption} \label{a:h-C3}
$h\in C^3$ in a neighborhood of $x^*$.
\end{newassumption}
 
\begin{theorem} \label{thrm:saddle-instability}
Let $\{x(k)\}_{k\geq 1}$ satisfy \eqref{dynamics_DT3}. 
Let  $x^*$ be a regular saddle point of $h\vert_\C$.
Suppose Assumptions \ref{a:h-loc-lip}--\ref{a:coercive-h}, \ref{a:Q-PSD1}--\ref{a:noise3} hold and \ref{a:eigvec-continuity-h}--\ref{a:h-C3} hold relative to $x^*$.
Then for any initialization,
$$\mathbb{P}\left(\lim_{k\to\infty} x(k) = x^*\right) = 0.$$
\end{theorem}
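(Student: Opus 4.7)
The plan is to adapt the classical strategy of \citep{pemantle1990nonconv}, in which nonconvergence to an unstable equilibrium of an autonomous stochastic recursion follows from the existence of a local stable manifold combined with persistent noise excitation. The key obstacle is that \eqref{dynamics_DT3} is \emph{nonautonomous}: the penalty weight $\gamma_k \to \infty$ means that any underlying continuous-time flow has a time-varying right-hand side, so the classical stable manifold theorem does not apply off the shelf, and one must invoke the nonautonomous stable manifold theorem developed in the companion work \citep{SMKP-TAC2020}.

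First, I would localize near $x^*$. By Theorem \ref{thrm:conv-to-cp-DT-h} the iterates of \eqref{dynamics_DT3} converge to the set of critical points of $h\vert_\C$ and to $\C$, so it suffices to analyze the dynamics in a small neighborhood $U$ of $x^*$. On $U$, Assumption \ref{a:h-C3} lets us treat $h$ as $C^3$. Using the invertibility of $\Hess h\vert_\C(x^*)$ (the saddle is regular), together with Assumption \ref{a:eigvec-continuity-h}, I would invoke the stable manifold theorem for continuous-time DGF (reviewed in Section \ref{sec:CT-stable-manifold}) to obtain a locally invariant manifold $W^s$ passing through $x^*$ whose tangent space at $x^*$ is the direct sum of the negative (i.e., stable) eigenspace of $\Hess h\vert_\C(x^*)$ with the off-consensus direction $\C^\perp$. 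Crucially, $W^s$ has positive codimension inside $\R^M$ along some unstable direction, and Lemma \ref{lemma:manifold-C2} gives $C^2$ regularity of $W^s$, which is needed for a quadratic Taylor expansion later.

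Next, I would construct a scalar ``distance from stable manifold'' function $\eta:U\to[0,\infty)$ (as in \eqref{eq:eta-ineq} and Proposition \ref{prop:eta-properties}) whose zero set is exactly $W^s$ and which measures the projection onto the unstable eigendirection. The technical heart of the argument is the key inequality
\[
\E\bigl[\eta(x(k+1))\,\big|\,\calF_k\bigr] \;\geq\; (1+c\,\alpha_k)\,\eta(x(k)) - O(\alpha_k^2),
\]
valid whenever $x(k)\in U$, for some $c>0$ depending on the smallest absolute value of an unstable eigenvalue of $\Hess h\vert_\C(x^*)$. This would be obtained by expanding $\eta$ to second order around $W^s$, using that $W^s$ is invariant under the continuous flow so the linearization of the drift $-v(k)-\gamma_k Qx(k)$ acts as an expansion in the $\eta$-direction, and then bounding the error terms using $C^2$-smoothness of $W^s$ together with Assumption \ref{a:coercive-h} to control unbounded gradients.

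Finally, I would combine this expansion with the noise excitation Assumption \ref{a:noise3}: because the noise has nonvanishing one-sided expectation in every unit direction of $\C$, the increments of $\eta$ have a strictly positive conditional lower bound at scale $\sqrt{\alpha_k}$ along the (projected) unstable direction. Following the Pemantle-style argument (adapted in Section \ref{sec:stochastic-analysis-D-SGD}), on the event $\{x(k)\to x^*\}$ the iterates remain in $U$ for all large $k$, and $\eta(x(k))$ behaves like a submartingale with both a multiplicative expanding drift and a persistent noise floor; standard arguments then show $\eta(x(k))$ cannot stay arbitrarily small, contradicting convergence to $x^*\in W^s$. A conditional version of this argument, combined with countable covering of the neighborhood and a Borel--Cantelli type bound, yields $\P(x(k)\to x^*)=0$ from every initial condition.

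The main obstacle is the first step: proving the key growth inequality for $\eta$ uniformly in $k$, despite the fact that $\gamma_k\to\infty$ distorts the geometry of trajectories near $\C$. Unlike the autonomous setting of \citep{pemantle1990nonconv,lee2016gradient}, $W^s$ here is itself a nonautonomous object, and one must show that the unstable direction of $\Hess h\vert_\C(x^*)$ is tracked correctly by the discrete iterates even as the off-consensus component $x_{nc}(k)$ is being compressed at rate $\alpha_k\gamma_k$. This is why Lemma \ref{lemma:manifold-C2} and the careful geometric analysis of Section \ref{sec:D-SGD-geometry} are indispensable.
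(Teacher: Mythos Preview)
Your proposal is correct and follows essentially the same Pemantle-inspired route as the paper: invoke the nonautonomous stable-manifold theorem from \citep{SMKP-TAC2020}, use the $C^2$ regularity of Lemma~\ref{lemma:manifold-C2} to define a distance-to-manifold function, establish the key expansion inequality of Proposition~\ref{prop:eta-properties}, and then run the stochastic argument using the noise excitation Assumption~\ref{a:noise3}.

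Two points where the paper is more precise than your sketch. First, because $W^s$ is genuinely time-varying, the function $\eta$ must be a function of $(x,t)$, not just $x$; the paper tracks $S_k=\eta(T(x(k),\zeta_k),\zeta_k)$ with $\zeta_k=\sum_{j\le k}\alpha_j$, and the coordinate change $T$ (recentering at the perturbed critical point $g(\gamma_t)$ and rotating by $U(t)$) is essential to get the deterministic inequality uniformly in $t$. Your write-up should make this time-dependence explicit from the outset rather than only flagging it at the end. Second, the paper's endgame is not a Borel--Cantelli/covering argument but a two-lemma structure: Lemma~\ref{lemma:wander-far} shows $\P(\sup_{j\ge k} S_j > c_4 k^{1/2-\tau_\alpha}\mid\calF_k)\ge 1/2$ (noise pushes $S_k$ far from zero), and Lemma~\ref{lemma:doesnt-return} shows that once $S_k$ is that large it stays bounded away from zero with probability at least $a>0$; a L\'evy 0--1-law contradiction then gives $\P(S_k\to 0)=0$. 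Your ``submartingale with drift plus noise floor'' intuition is exactly right, but the clean way to cash it out is this pair of lemmas rather than a covering.
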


Theorem \ref{thrm:saddle-instability} is the focal point of the paper. It implies Theorem  \ref{thrm:DT-nonconvergence-gen} (and, of course, Theorem \ref{thrm:DT-nonconvergence}). See Remark \ref{remark:Thrm-nonconvergence} below for more details. 
The remainder of the paper will focus on proving Theorem \ref{thrm:saddle-instability}. In \citep{SMKP-TAC2020} it was shown that a time-varying stable manifold exists for DGF near saddle points. The key idea of the proof of Theorem \ref{thrm:saddle-instability} will be to show that solutions of \eqref{dynamics_DT3} are repelled from this stable manifold. 

The proof of Theorem \ref{thrm:saddle-instability} will proceed as follows: In Section \ref{sec:CT-stable-manifold} we will review the stable manifold theorem for DGF and will prove a key smoothness property of the stable manifold (Theorem \ref{lemma:manifold-C2}).
Next, in Section \ref{sec:D-SGD-geometry} we will consider the relationship of the discrete-time process \eqref{dynamics_DT3} to the stable manifold for DGF and establish a key inequality relating to the instability of the stable manifold (Proposition \ref{prop:eta-properties}, Property 4). Finally, in Section \ref{sec:stochastic-analysis-D-SGD} we will carry out the stochastic analysis of \eqref{dynamics_DT3} required to prove Theorem \ref{thrm:saddle-instability}. 

We emphasize that our approach to proving Theorem \ref{thrm:saddle-instability} is based upon the techniques developed in \citep{pemantle1990nonconv}. We note, however, that \citep{pemantle1990nonconv} studies \emph{autonomous} dynamical systems. The dynamics \eqref{dynamics_DT3} (and \eqref{eq:ODE1}) are \emph{non-autonomous}, and the approach used in \citep{pemantle1990nonconv} requires substantial modification to address the non-autonomous setting.

\begin{remark} [Proving Theorem \ref{thrm:DT-nonconvergence-gen}] \label{remark:Thrm-nonconvergence}
Recall that \eqref{dynamics_DT3} is a generalization of \eqref{dynamics_DT-DI} where $Q = L\otimes I_d$ (see Section \ref{sec:gen-framework}). Note that assumption \ref{a:grab-bag}, part 1 implies that $L\otimes I_d$ has exactly $d$ zero eigenvalues, and thus is a special case of Assumption \ref{a:Q-PSD1}. From here, it is straightforward to verify the remaining assumptions and see that Theorem \ref{thrm:saddle-instability} implies Theorem \ref{thrm:DT-nonconvergence-gen}.


\end{remark}


\section{Continuous-Time Dynamics and the Stable-Manifold Theorem} \label{sec:CT-stable-manifold}

The analysis of Section \ref{sec:D-SGD-geometry} will build off of the stable-manifold theorem derived in \citep{SMKP-TAC2020}. In this section, we review the key results from \citep{SMKP-TAC2020} that will be required later in the paper. In particular, in Section \ref{sec:stab-man-review} we review the stable-manifold theorem for DGF.\footnote{In fact, the stable manifold applies to the process \eqref{eq:ODE1}. However, to simplify nomenclature we refer to it as the stable manifold for DGF.} 
In Section \ref{sec:stable-man-key-defs} we review some key definitions from \citep{SMKP-TAC2020} that will be required in the subsequent analysis.\footnote{In order to minimize the overlap between this paper and \citep{SMKP-TAC2020}, Sections \ref{sec:stab-man-review}--\ref{sec:stable-man-key-defs} review only the material from \citep{SMKP-TAC2020} that is required to keep the present paper self contained.} In Section \ref{sec:stabl-man-C3-smooth} we prove an important smoothness property of the stable manifold that will be required later.

\subsection{The Stable-Manifold Theorem for DGF} \label{sec:stab-man-review}

The following result states the stable manifold theorem for DGF. 
\begin{theorem}[\citep{SMKP-TAC2020}, Theorem 21] \label{thrm:main-continuous}
Suppose that $h$ is $C^2$ in a neighborhood of $x^*$ and $x^*$ is a regular saddle point of $h\vert_{\C}$. Suppose that Assumptions \ref{a:Q-PSD1} and \ref{a:eigvec-continuity-h} hold and the weight function $t\mapsto \gamma_t$ is $C^1$ and satisfies $\gamma_t\to\infty$. Let $q$ denote the number of negative eigenvalues of $\Hess h\vert_{\C}(x^*)$. Then there exists a $C^1$ manifold $\calS\subset [0,\infty)\times\R^{M}$ with dimension $M-q+1$ such that the following holds: For all $t_0$ sufficiently large, a solution $\vx$ to \eqref{eq:ODE1} converges to $x^*$ if and only if $\vx$ is initialized on $\calS$, i.e., $\vx(t_0) = x_0$, with $(t_0,x_0)\in\calS$.
\end{theorem}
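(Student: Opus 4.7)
The plan is to prove this via a nonautonomous adaptation of the classical Lyapunov--Perron construction. First I would center coordinates at $x^*$ and, using the rotation from \eqref{eq:C-rotation}, align $\calC$ with the first $d$ axes so that $Q$ has block form $\diag(0, \widehat Q)$ with $\widehat Q \succ 0$ on $\R^{M-d}$. Within $\R^d = \calC$ I would further diagonalize the invertible symmetric matrix $\Hess h\vert_{\calC}(x^*)$ into a negative block of dimension $q$ and a positive block of dimension $d - q$. Writing the ODE \eqref{eq:ODE1} as $\dot{\vx} = A(t) \vx + R(t, \vx)$ with $A(t) = -\gamma_t Q - \Hess h(x^*)$ and $R$ absorbing the higher-order nonlinearity, this decomposition identifies an $(M - q)$-dimensional ``stable'' subspace (the $d - q$ positive Hessian directions within $\calC$, plus all $M - d$ directions of $\calC^\perp$) and a $q$-dimensional ``unstable'' subspace (the negative Hessian directions in $\calC$).

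Next I would establish an exponential dichotomy for the nonautonomous linear propagator $\Phi(t, s)$ generated by $A(t)$. On the unstable block $\Phi$ grows at rate comparable to the largest negative eigenvalue of $\Hess h\vert_\calC(x^*)$; on the stable-slow block it contracts at the smallest positive eigenvalue of $\Hess h\vert_\calC(x^*)$; on the stable-fast block contraction is governed by $\gamma_t \widehat{Q}$ and so becomes arbitrarily strong as $t \to \infty$. Assumption \ref{a:eigvec-continuity-h} is what guarantees that these invariant splittings depend continuously on the base point in a neighborhood of $x^*$, avoiding the pathological eigenvector twisting possible in the distributed setting.

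Given the dichotomy, for each fixed $t_0$ large and each initial condition $\eta$ in the stable subspace near $0$, I would set up the standard Lyapunov--Perron integral equation: project onto the stable subspace and prescribe the initial data $\eta$ via $\Phi(t, t_0)\eta$ plus a forward convolution of $R$; project onto the unstable subspace and represent the solution as a backward convolution from $+\infty$, which is the condition that forces boundedness (and hence convergence to $x^*$). On a suitably weighted $\sup$-norm space of functions decaying to $0$, this map contracts for $t_0$ large, yielding a unique bounded solution $\vx_\eta$. A standard uniform-contraction-in-parameters argument gives $C^1$ dependence of $\vx_\eta(t_0)$ on $\eta$, producing a $C^1$ graph over the stable subspace. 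For each fixed $t_0$ this is an $(M - q)$-dimensional stable manifold in $\R^M$; letting $t_0$ vary and using the $C^1$ dependence of $\gamma_t$ and of the integral equation on $t_0$ assembles these fibers into the $(M - q + 1)$-dimensional $C^1$ submanifold $\calS \subset [0, \infty) \times \R^M$.

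The main obstacle is the nonautonomous exponential dichotomy: classical autonomous stable-manifold results do not directly apply because the spectral gap on the fast block diverges with $t$, while the gap on the slow block is finite and governed by the Hessian. One must derive two-sided estimates on $\|\Phi(t, s)\|$ block-by-block that are sharp enough for the contraction to close, and must control the cross-coupling between slow and fast directions induced by off-diagonal Hessian entries and by $R$. The separation of time scales encoded in $\gamma_t \to \infty$ is exactly what makes these estimates go through, but extracting them uniformly for all large $t_0$ (so that the manifold can be defined globally in the $t_0$ variable) is the technically demanding part of the argument.
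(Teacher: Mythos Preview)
Your high-level strategy (Lyapunov--Perron integral equation, exponential dichotomy, contraction, then $C^1$ dependence on $(t_0,\eta)$) matches what the paper's companion reference does. But there is a genuine gap in the first step: $x^*$ is \emph{not} an equilibrium of \eqref{eq:ODE1}. It is only a critical point of $h\vert_\calC$, so $\nabla h(x^*)$ vanishes in the $\calC$ directions but generically has a nonzero $\calC^\perp$ component. Since $Qx^*=0$, your centered field reads
\[
\dot\vx = A(t)\vx - \nabla h(x^*) + O(\|\vx\|^2),
\]
and the term $-\nabla h(x^*)$ is a \emph{constant} forcing that your ``higher-order'' remainder $R$ cannot absorb. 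With $R(t,0)\neq 0$ the contraction estimate you describe will not close on a ball centered at $0$, and the graph you produce will not be anchored at $x^*$.

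The paper's construction repairs exactly this: Lemma~\ref{lemma:g-existence} produces a moving critical point $g(\gamma_t)\to x^*$ of the full penalized function $h(x)+\tfrac{\gamma_t}{2}x^\top Qx$, and the coordinates are recentered there (and then diagonalized by $U(t)$, which is precisely where Assumption~\ref{a:eigvec-continuity-h} is used to make $U(t)$ converge). After this recentering the zeroth-order term becomes the derivative $-U(\tau)g'(\gamma_\tau)\dot\gamma_\tau$, which has finite arc length by \eqref{eq:g-bdd-arc-length} and therefore enters the integral equation \eqref{eq:integral-eq0} as a controllable perturbation. Once you insert this step, the rest of your outline (dichotomy estimates \eqref{eq:ex-estimate-s}--\eqref{eq:ex-estimate-u}, Picard contraction, uniform-in-$t_0$ $C^1$ dependence) is exactly the route taken in the cited proof.
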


\begin{remark}[On the eigenvector differentiability assumption] \label{remark:eigval-cont}
We require Assumption \ref{a:eigvec-continuity-h} to establish a stable manifold theorem for DGF, but an analogous assumption is not needed in the classical setting. At a high level, the reason Assumption \ref{a:eigvec-continuity-h} is needed here is because we are dealing with constrained optimization and penalty methods that operate from the \emph{exterior} of the constraint set. That is, trajectories to our optimization dynamics generally reside outside the constraint set. 
As trajectories are brought towards a point in the constraint set $\C$ (or the consensus subspace in the case of DGF), it can occur that that the eigenvectors along the path rotate without converging to a limit. e.g., Example 19 in \citep{SMKP-TAC2020}. Assumption \ref{a:eigvec-continuity-h} allows us to rule out such pathological behavior. 
\end{remark}

\subsection{Stable Manifold: Key Definitions and Notation} \label{sec:stable-man-key-defs}

In \citep{SMKP-TAC2020} the stable manifold is constructed using a change of coordinates. The stable manifold is simpler to express and analyze under this change of coordinates. For the sake of subsequent manipulations, we will review the coordinate change now and review other key properties of the stable manifold for DGF.

For convenience, we will consider critical point of $h\vert_\C$ residing at the origin. 
The following result from \citep{SMKP-TAC2020} establishes the existence of a time-varying critical point of the penalized function $h(x) + \gamma_tx^\T Qx$ near $0$. 
\begin{lemma} [\citep{SMKP-TAC2020}, Lemma 23] \label{lemma:g-existence}
Suppose that $h$ is $C^2$ in a neighborhood of $0$ and $0$ is a regular saddle point of $h\vert_{\C}$. Suppose that Assumptions \ref{a:Q-PSD1} and \ref{a:eigvec-continuity-h} hold and the weight function $t\mapsto \gamma_t$ is $C^1$ in $t$ and satisfies $\gamma_t\to\infty$.
Then there exists a constant $\gamma_0>0$ and a function $g:[\gamma_0,\infty)\to \R^M$ such that (i)
$\nabla h(g(\gamma)) - g(\gamma)^\T Q = 0$ for all $\gamma\geq \gamma_0$ and (ii) $g(\gamma)\to 0$ as $\gamma\to\infty$. Moreover, the arc length of $\{g(\gamma): \gamma\geq \gamma_0\}$ is finite, where $\gamma_0$ is a sufficiently large constant, i.e.,
\begin{equation} \label{eq:g-bdd-arc-length}
\int_{\gamma_0}^\infty |g'(s)|\ds < \infty.
\end{equation}
\end{lemma}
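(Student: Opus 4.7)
My plan is to reformulate the defining equation as a smooth equation that remains regular in the limit $\gamma \to \infty$, and then apply the implicit function theorem. The natural rescaling is $\varepsilon := 1/\gamma$. The defining equation $\nabla h(g(\gamma)) + \gamma Q g(\gamma) = 0$ is singular as $\gamma \to \infty$, but after dividing through appropriately it has a nondegenerate limit at $\varepsilon = 0$ that IFT can handle.

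\textbf{Setting up the implicit function theorem.} I work in rotated coordinates so that $\C = \{x_{nc}=0\}$ and $Q = \mathrm{diag}(0,\widehat Q)$ with $\widehat Q \succ 0$ on $\R^{M-d}$ (cf.~\eqref{eq:C-rotation}--\eqref{eq:Q-diag-form}). Writing $x = (x_c,x_{nc})$, the equation $\nabla h(x) + \gamma Q x = 0$ splits into
\begin{equation}
\bigl(\nabla h(x)\bigr)_c = 0, \qquad \varepsilon \bigl(\nabla h(x)\bigr)_{nc} + \widehat Q\, x_{nc} = 0,
\end{equation}
for $\varepsilon > 0$. Define $F: \R^d \times \R^{M-d} \times \R \to \R^d \times \R^{M-d}$ by
\begin{equation}
F(x_c, x_{nc}, \varepsilon) = \Bigl(\bigl(\nabla h(x_c,x_{nc})\bigr)_c,\; \varepsilon \bigl(\nabla h(x_c,x_{nc})\bigr)_{nc} + \widehat Q\, x_{nc}\Bigr).
\end{equation}
By assumption $0$ is a critical point of $h|_\C$, so $F(0,0,0) = 0$. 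The Jacobian of $F$ in $(x_c, x_{nc})$ at $(0,0,0)$ has the block form
\begin{equation}
J = \begin{pmatrix} \Hess h|_\C(0) & \bigl(\Hess h(0)\bigr)_{c,nc} \\ 0 & \widehat Q \end{pmatrix},
\end{equation}
which is invertible because $\Hess h|_\C(0)$ is invertible (regular saddle) and $\widehat Q$ is invertible. Since $h$ is $C^2$ near $0$, $F$ is $C^1$ and IFT produces a $C^1$ curve $\varepsilon \mapsto (x_c(\varepsilon), x_{nc}(\varepsilon))$, defined for $\varepsilon$ in some small interval $[0,\varepsilon_0)$, with $(x_c(0), x_{nc}(0)) = (0,0)$. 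Setting $\gamma_0 := 1/\varepsilon_0$ and $g(\gamma) := (x_c(1/\gamma), x_{nc}(1/\gamma))$ gives properties (i) and (ii): $g$ satisfies the critical point equation for $\gamma \ge \gamma_0$ and $g(\gamma) \to 0$ as $\gamma \to \infty$ by continuity of the IFT curve at $\varepsilon=0$.

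\textbf{Arc length.} For \eqref{eq:g-bdd-arc-length}, let $G(\varepsilon) := (x_c(\varepsilon), x_{nc}(\varepsilon))$ so that $g(\gamma) = G(1/\gamma)$. Because $G$ is $C^1$ on $[0,\varepsilon_0)$, its derivative is bounded on a closed subinterval, say $|G'(\varepsilon)| \le M_0$ for $\varepsilon \in [0,\varepsilon_0)$. By the chain rule,
\begin{equation}
|g'(\gamma)| = \frac{|G'(1/\gamma)|}{\gamma^2} \le \frac{M_0}{\gamma^2},
\end{equation}
so $\int_{\gamma_0}^\infty |g'(s)|\, ds \le \int_{\gamma_0}^\infty M_0 s^{-2}\, ds < \infty$, as required.

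\textbf{Main obstacle.} The conceptual obstacle is recognizing that the natural equation is singular in $\gamma$ but regular in $1/\gamma$, so the problem reduces to a standard IFT application in the rescaled variable; the block structure of $Q$ in the $\C$-adapted coordinates is what makes the Jacobian triangular with the two relevant invertible diagonal blocks. Assumption~\ref{a:eigvec-continuity-h} does not enter here---it is used downstream in the stable manifold construction of Theorem~\ref{thrm:main-continuous} where one must track eigenvectors along the curve $g$, not merely establish its existence. The only other item worth a careful check is that $g$ is globally defined on all of $[\gamma_0, \infty)$ rather than just a small neighborhood of $\infty$; this is fine because IFT gives a half-neighborhood $\varepsilon \in [0, \varepsilon_0)$, which translates directly into $\gamma \in [1/\varepsilon_0, \infty)$.
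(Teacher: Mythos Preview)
Your proof is correct. The paper does not itself prove this lemma---it is cited from the companion paper \citep{SMKP-TAC2020}---so there is no in-paper argument to compare against directly. Your reparametrization $\varepsilon = 1/\gamma$, which converts the singular-in-$\gamma$ critical point equation into a $C^1$ system regular at $\varepsilon=0$, followed by the implicit function theorem applied to the block-triangular Jacobian with invertible blocks $\Hess h|_\C(0)$ and $\widehat Q$, is the standard route for results of this type and is almost certainly what the companion paper does as well. The arc-length bound via $|g'(\gamma)|\le M_0\gamma^{-2}$ is exactly right. One trivial wording fix: you say $G'$ is bounded ``on a closed subinterval'' and then assert the bound on all of $[0,\varepsilon_0)$; what you mean is that IFT gives $G\in C^1$ on an open neighborhood of $0$, so after possibly shrinking $\varepsilon_0$ the derivative is bounded on $[0,\varepsilon_0]$. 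Your observation that Assumption~\ref{a:eigvec-continuity-h} is not actually used here (it enters only in the downstream stable-manifold construction) is also correct.
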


Having established the existence of the time-varying critical point $g(t)$, it will be convenient to consider a time-varying coordinate change that recenters the coordinate system about $g(t)$ and rotates the coordinate system so as to make the Hessian of $h(x) + \gamma_t x^\T Qx $ diagonal at $g(t)$. We will do this next. We note that this same change of coordinates is used in \citep{SMKP-TAC2020} (see proof of Lemma 24 therein). However, we repeat the construction of the coordinate change here because several of the intermediate quantities defined during this construction will be critical to the analysis later in the paper.

For $t\geq 0$ let
\begin{equation} \label{eq:A-matrix-def}
A(t) \coloneqq -\Hess_x \left( h(x) + \gamma_t \frac{1}{2}x^\T  Q x \right)\bigg\vert_{x = g(\gamma_t)}
\end{equation}
be the linearization of the right hand side of \eqref{eq:ODE1} centered about $g(\gamma_t)$. 
Letting $y=x-g(\gamma_t)$, we let
\begin{equation} \label{eq:F-def}
F(y,t) \coloneqq -\nabla_x h(y + g(\gamma_t)) - \gamma_t Q(y + g(\gamma_t)) - A(t)y
\end{equation}
represent the error between the linearized and nonlinear dynamics. 

For each $t\geq 0$, let $U(t)$ be a unitary matrix that diagonalizes $A(t)$ (which is possible as $A(t)$ is always symmetric), so that
\begin{equation} \label{def:Lambda-t}
\Lambda(t) \coloneqq U(t)A(t)U(t)^\T ,
\end{equation}
where $\Lambda(t)$ is diagonal.
Since $\gamma_t\in C^1$, 
by Assumption \ref{a:eigvec-continuity-h}  we may construct $U(t)$ as a differentiable function with $U(t)$ converging to some fixed matrix
as $t\to\infty$ (see \citep{SMKP-TAC2020}).
The stable manifold can be constructed by recentering about $g(\gamma_t)$ and rotating by $U(t)$. In particular, letting $\vz(t) := U(t)\left( \vx(t) - g(\gamma_t)\right)$ we obtain the convenient coordinate-changed ODE
\begin{align}
    \dot \vz(t) & = 
    U(t)\Big( A(t)U(t)^\T \vz(t) + F(U(t)^\T \vz(t),t)
- g'(\gamma_t)\dot \gamma_t \Big) + \dot{U}(t) U(t)^\T  \vz(t)\\
    & =: H(\vz(t),t). \label{eq:H-vec-field-def}
\end{align}

Let the diagonal matrix $\Lambda(t)$ above be decomposed as\footnote{To simplify notation in this paper, the ordering of coordinates in the following equations has been changed from that used in \citep{SMKP-TAC2020}.}
\begin{equation} \label{eq:Lambda-decomposition}
\Lambda(t) =
\begin{pmatrix}
  \Lambda^{u}(t) & 0 \\
  0 & \Lambda^s(t)
\end{pmatrix}
\end{equation}
where $\Lambda^{s}(t)\in \R^{\ns\times \ns}$ and $\Lambda^u(t) \in \R^{(M-\ns)\times (M-\ns)}$ denote the `stable' and `unstable' diagonal submatrices respectively, and $\ns$ denotes the number of `stable' coordinates. 
It can be shown that for $t$ sufficiently large, all entries in $\Lambda^s(t)$ are less than some constant $c<0$ and all entries in $\Lambda^u(t)$ are greater than 0 ; see \citep{SMKP-TAC2020}, proof of Lemma 24. With this in mind, when defining the dimension $n_s$ above, we implicitly take time sufficiently large.\footnote{Alternatively, $n_s$ can be computed as the number of negative eigenvalues in $\nabla^2 h\vert_\C(x^*)$ plus $M-\dim\C$ (where $M-\dim\C =$  the number of ``off-constraint'' directions). Intuitively, the penalty makes all off-constraint directions stable for $t$ sufficiently large.}

Let the matrices
\begin{equation} \label{eq:V-def}
V^u(t_2,t_1) \coloneqq
\begin{pmatrix}
e^{\int_{t_1}^{t_2} \Lambda^u(\tau)\,d\tau} & 0\\
0 & 0\\
\end{pmatrix},\quad\quad
V^s(t_2,t_1) \coloneqq
\begin{pmatrix}
0 & 0\\
0 & e^{\int_{t_1}^{t_2} \Lambda^s(\tau)\,d\tau}\\
\end{pmatrix}
\end{equation}
denote the respective evolution operators corresponding to the stable and unstable elements in $\Lambda(t)$. 

Similar to \eqref{eq:F-def}, after recentering and rotating, the error between the linearized and nonlinear dynamics is given by
\begin{equation} \label{eq:def-F-tilde}
\tilde F(z,t) \coloneqq U(t)F(U(t)^\T  z,t) + \dot U(t) U(t)z 
\end{equation}
Note that $\tilde F(0,t) = 0$ and $\tilde F(z,t) = o(|z|^2)$ for $t\geq 0$. Consequently, for any $\epsilon>0$ there exists an $r>0$ and a $T\geq 0$ such that for all $t\geq T$ we have
\begin{equation} \label{eq:Lip-F}
|\tilde F(z,t) - \tilde F(\tilde z,t)| \leq \e |z-\tilde z|, \quad\quad \forall ~z,\tilde z\in B_r(0).
\end{equation}
Let $t_0\in \R$, $a^s\in \R^{n_s}$, and $t\geq t_0$. 

Solutions of the following integral equation may be used to compute the stable manifold. 
\begin{align} \label{eq:integral-eq0}
\vu(t,(t_0,a^s)) = & V^s(t,t_0)
\begin{pmatrix}
0\\
a^s
\end{pmatrix}\\
& + \int_{t_0}^t V^s(t,\tau)\bigg(\tilde F(\vu(\tau,(t_0,a^s)),\tau) -U(\tau)g'(\gamma_\tau)\dot\gamma_\tau \bigg)\,d\tau\\
& - \int_{t}^\infty V^u(t,\tau)\bigg( \tilde F(\vu(\tau,(t_0,a^s)),\tau) -U(\tau)g'(\gamma_\tau)\dot\gamma_\tau  \bigg)\,d\tau,
\end{align}
where $\vu:\R\times \R\times \R^{n_s}\to\R^M$ and 0 is a vector of zeros of appropriate dimension. To be precise, we have included the initial time $t_0$ as a parameter in $\vu$. However, in most of our analysis $t_0$ will be fixed. Thus, in an abuse of notation, we will generally suppress the $t_0$ argument and only specify $\vu$ in terms of the arguments $t$ and $a^s$, i.e., $\vu(t,a^s)$. 

It is important to note that \eqref{eq:integral-eq0} is not only useful for constructing the stable manifold (via Picard iteration \citep{coddington1955theory}) but it also provides an extremely useful representation formula for the stable manifold that will be used extensively in the sequel. 

We may choose constants $\sigma> 0$ and $K>0$ such that the following estimates hold
\begin{align}
\label{eq:ex-estimate-s}
\|V^s(t_2,t_1)\| & \leq Ke^{-(\nu+\sigma)(t_2-t_1)}, \quad \quad t_2\geq t_1\\
\label{eq:ex-estimate-u}
\|V^u(t_2,t_1)\| & \leq Ke^{\sigma (t_2-t_1)}, \quad\quad\quad~~~ t_2\leq t_1.
\end{align}
One useful property of solutions to \eqref{eq:integral-eq0} 
is that they are exponentially stable in the sense that\footnote{Recent work has considered optimization dynamics near ``strict'' saddle points, where the Hessian has at least one negative eigenvalue (but may have zero eigenvalues) \citep{ge2015escaping}. An important reason we consider regular rather than strict saddle points, is one cannot in general guarantee that this estimate holds near saddle points that are merely strict.}
\begin{equation} \label{eq:u-bound}
|\vu(t,a^s)| \leq 2K(1+|a^s|)e^{-\alpha(t-t_0)},
\end{equation}
for some constant $\alpha >0$.

Given the above construction, the stable manifold is defined as follows. 
For each $t\in[T,\infty)$ define the component map $\psi_j:\R\times \R^{n_s}\to \R$ by
\begin{equation}\label{eq:psi-S-def}
\psi_j(t_0,z_0^s) \coloneqq \vu_j(t_0,(t_0,z_0^s)), \quad j=n_s+1,\ldots,M,
\end{equation}
and let $\psi = (\psi_j)_{j=n_s+1}^M$.
The stable manifold is defined with respect to the $z$-coordinate system as
$$
\calS \coloneqq\{(t_0,z_0^s,\psi(t_0,z_0^s)), t_0\geq T, \, z_0^s\in \R^k\cap B_{\frac{r}{3}}(0)\}.
$$
The stable manifold  for \eqref{eq:ODE1} in the original coordinates, denoted here by $\tilde \calS$, is obtained by an appropriate change of coordinates, $\tilde \calS \coloneqq\{(t,x)\in \R\times\R^M:~ U(t)(x-g(\gamma_t)) \in \calS \}.$

We will rely heavily on the  definition of the stable manifold in terms of \eqref{eq:psi-S-def} (and  \eqref{eq:integral-eq0}) in the sequel.

\subsection{Stable-Manifold: Higher-Order Smoothness} \label{sec:stabl-man-C3-smooth}
In this section we will strengthen the continuity assumption made in Theorem \ref{thrm:main-continuous} slightly. We will show that if $h$ is one degree smoother than assumed in Theorem \ref{thrm:main-continuous}, then the stable manifold is also one degree smoother (see Assumption \ref{a:h-C3}). To simplify the presentation, in the following proof we (sparingly) use Einstein notation, wherein repeated indices denote summation over that index.
\begin{lemma}\label{lemma:manifold-C2}
Suppose that $x^*$ is a regular saddle point of $h\vert_{\C}$ and Assumptions \ref{a:Q-PSD1} and \ref{a:eigvec-continuity-h}--\ref{a:h-C3} hold.
Assume the weight function $t\mapsto \gamma_t$ is $C^1$ and satisfies $\gamma_t\to\infty$. Then the stable manifold $\calS$ is $C^2$, uniformly in $t$. That is, the functions $\psi_i$ used to define $\calS$ in \eqref{eq:psi-S-def} are $C^2$ and the second derivatives of each $\psi_i$ are bounded uniformly in $t$.
\end{lemma}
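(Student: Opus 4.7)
The plan is to upgrade the $C^1$ regularity of the stable manifold obtained in Theorem \ref{thrm:main-continuous} to $C^2$ by differentiating the Perron-type integral equation \eqref{eq:integral-eq0} twice in the parameter $a^s$ and showing the resulting linear equation for the second derivative has a unique bounded solution whose norm is independent of $t_0$. Since $\psi_j(t_0, z_0^s) = \vu_j(t_0,(t_0,z_0^s))$, it suffices to prove that $a^s\mapsto \vu(t,a^s)$ is $C^2$ with second derivative bounded uniformly in $t$. The key new ingredient enabled by Assumption \ref{a:h-C3} is that $\nabla h\in C^2$ near $x^*$, so the residual $F$ in \eqref{eq:F-def} and consequently $\tilde F$ in \eqref{eq:def-F-tilde} are $C^2$ in their first argument, with $D_z \tilde F$ and $D_z^2 \tilde F$ bounded uniformly in $t$ on $B_r(0)$ (using that $U(t)$ and $\dot U(t)$ are uniformly bounded, as shown in \citep{SMKP-TAC2020}).

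Formally differentiating \eqref{eq:integral-eq0} once in $a^s$ yields a linear integral equation for $\Phi_1 := D_{a^s}\vu$:
\begin{align}
\Phi_1(t) = V^s(t,t_0)\begin{pmatrix} 0 \\ I \end{pmatrix} & + \int_{t_0}^t V^s(t,\tau)\, D_z\tilde F(\vu(\tau),\tau)\,\Phi_1(\tau)\,d\tau \\ & - \int_t^\infty V^u(t,\tau)\, D_z\tilde F(\vu(\tau),\tau)\,\Phi_1(\tau)\,d\tau.
\end{align}
The estimate \eqref{eq:Lip-F} gives $\|D_z\tilde F\|\leq \e$ on $B_r(0)$, and \eqref{eq:ex-estimate-s}--\eqref{eq:ex-estimate-u} provide the exponential decay of the evolution operators; for $\e$ sufficiently small, the linear operator on the right-hand side is a contraction on the Banach space of bounded continuous matrix-valued functions on $[t_0,\infty)$, so a unique bounded $\Phi_1$ exists with sup-norm bound independent of $t_0$. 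A Picard-iteration argument, starting from smooth iterates and using uniform convergence of derivatives, identifies $\Phi_1$ with the true derivative $D_{a^s}\vu$.

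Differentiating once more yields an equation of the same form for $\Phi_2 := D_{a^s}^2 \vu$, but with an inhomogeneous term $G(\tau) = D_z^2 \tilde F(\vu(\tau),\tau)(\Phi_1(\tau),\Phi_1(\tau))$:
\begin{align}
\Phi_2(t) = & \int_{t_0}^t V^s(t,\tau)\bigl[ D_z\tilde F(\vu,\tau)\Phi_2(\tau) + G(\tau) \bigr]\,d\tau \\ & - \int_t^\infty V^u(t,\tau)\bigl[ D_z\tilde F(\vu,\tau)\Phi_2(\tau) + G(\tau) \bigr]\,d\tau.
\end{align}
By \eqref{eq:u-bound} the trajectory $\vu(\tau)$ remains in $B_r(0)$, where $D_z^2\tilde F$ is uniformly bounded by Assumption \ref{a:h-C3}; combined with the uniform bound on $\Phi_1$ from the previous step, $G$ is bounded uniformly in $\tau$ and $a^s$. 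Since the operator acting on $\Phi_2$ is the same contraction as before, a unique bounded $\Phi_2$ exists with norm controlled by $\|G\|_\infty$ times a universal multiple of the contraction constants in \eqref{eq:ex-estimate-s}--\eqref{eq:ex-estimate-u}, independent of $t_0$. Evaluating at $t=t_0$ and restricting to the last $M-n_s$ coordinates gives $D_{z_0^s}^2\psi(t_0,z_0^s) = [\Phi_2(t_0)]_{n_s+1:M}$ with uniform-in-$t_0$ bound, and joint continuity of $\Phi_2$ in $(t_0,a^s)$ follows from the fixed-point equation together with continuity of $\vu$, $\Phi_1$, and $D_z^2\tilde F$.

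The main obstacle will be the rigorous identification of the formal derivatives $\Phi_1, \Phi_2$ with the true partial derivatives of $\vu$, since all three are characterized as fixed points of distinct (but related) operators. I would handle this by running the Picard iteration for \eqref{eq:integral-eq0} in a Banach space of functions whose $C^2$ norm in $a^s$ is bounded (with an exponentially weighted sup norm in $t$), verifying that the iteration map preserves a closed ball in this space and is a contraction in the lower-order $C^0$ norm, and then passing to the limit via Arzel\`a--Ascoli (or a direct dominated-convergence argument applied to the differentiated equations) to transfer $C^2$ regularity to the fixed point $\vu$. Einstein notation, as alluded to in the lemma statement, streamlines the multi-index bookkeeping in expanding $D_z^2 \tilde F(\cdot)(\Phi_1,\Phi_1)$ via the chain rule through $F$, the rotation $U(t)$, and the shift $g(\gamma_t)$.
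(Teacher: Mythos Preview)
Your proposal is correct and follows essentially the same approach as the paper: both differentiate the Perron integral equation \eqref{eq:integral-eq0} twice in $a^s$, obtain a linear integral equation for the second derivative with inhomogeneity $D_z^2\tilde F(\vu,\tau)(\Phi_1,\Phi_1)$, use Assumption \ref{a:h-C3} to bound $D_z^2\tilde F$ uniformly, and close with a contraction/Picard argument yielding a $t_0$-independent sup bound. The only minor difference is that the paper invokes the exponential decay $|\vz(t,a^s)|\leq 2K|a^s|e^{-\alpha(t-t_0)}$ of the first derivative (your $\Phi_1$) when estimating the inhomogeneous term, whereas you use only its uniform boundedness; either suffices because the evolution operators $V^s,V^u$ already supply enough decay to make the integrals finite.
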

\begin{proof}
The proof of this result is an extension of the proof in the classical autonomous case \citep[Sec. 13.4]{coddington1955theory} to the present setting.
Let $\vu(t,a^s)$ be the solution to \eqref{eq:integral-eq0} given $a^s\in \R^\ns$. Given $\vu$, fix $i\in \{1,\ldots,n_s\}$ and suppose $\vz(t,a^s)$ solves
\begin{align} \label{eq:integral-eqz}
  \vz(t,a^s) = & V^s(t,t_0)e_i\\
  & + \int_{t_0}^t V^s(t,\tau) D_x\tilde F(\mathbf{u}(\tau,a^s),\tau) \vz(\tau,a) \,d\tau\\ 
  & - \int_t^\infty V^u(t,\tau) D_x\tilde F(\mathbf{u}(\tau,a^s),\tau) \vz(\tau,a) \,d\tau.
\end{align}
In \citep{SMKP-TAC2020}, Lemma 24, it was shown that there exists a unique solution to \eqref{eq:integral-eqz} for $\|a^s\|$ sufficiently small. Moreover, it was shown that if $\vz(t,a_s)$ solves \eqref{eq:integral-eqz} then $\vz(t,a_s) = (\frac{\partial \vu_j(t,a_s)}{\partial a_i^s} )_{j=1}^M$; this is precisely how $\calS$ was shown to be $C^1$. 

In addition to $i$ fixed above, fix $j\in \{1,\ldots,\ns\}$. We will compute the vector of partial derivatives $(\frac{\partial \vu_\ell(t,a^s)}{\partial a^s_i \partial a^s_j})_{\ell=1}^M$.
Consider the integral equation, for vector $\vw$ with coordinates $w_\ell$, using Einstein notation and writing $V_{\ell, m}^s$ and $V_{\ell,m}^u$ to be the entries of the matrices $V^s$ and $V^u$:
\begin{align} \label{eq:integral-eqw}
w_\ell(t,a^s) = & \quad \int_{t_0}^t V_{\ell, m}^s(t,\tau) \left[ \vz^T(\tau,a^s)D^2_{x}\tilde F_m(\vu(\tau,a^s),\tau)\vz(\tau,a^s) + D_{x} \tilde F_m (\vu(\tau,a^s),\tau)\vw(\tau,a^s) \right]\dtau\\
& \quad - \int_{t}^\infty V_{\ell,m}^u(t,\tau) \left[ \vz^T(\tau,a^s)D^2_{x}\tilde F_m(\vu(\tau,a^s),\tau)\vz(\tau,a^s)  + D_{x} \tilde F_m(\vu(\tau,a^s),\tau)\vw(\tau,a^s) \right]\dtau.
\end{align}
Again using standard successive approximation techniques, i.e., Picard iterations, we see that for $a^s$ sufficiently small and $t_0$ sufficiently large, there exists a unique solution (in the class of continuous functions) to \eqref{eq:integral-eqw}.

Using the same reasoning used to show that $\vz(t,a^s) = \left(\frac{ \vu_j(t,a^s)}{\partial a_i^s}\right)_{j=1}^M$ in the proof of Lemma 25
in \citep{SMKP-TAC2020}, it follows that
$\vw(t,a^s) = \left(\frac{\partial \vz_j(t,a^s)}{\partial a_k^s}\right)_{\ell=1}^{M} = \left( \frac{\partial \vu_\ell(t,a^s)}{\partial a_j^s \partial a_i^s}\right)_{\ell=1}^{M}$. 

We now show that $\vw(t,a^s)$ is uniformly bounded in $t$. Using \eqref{eq:integral-eqw} we have
\begin{align} \label{eq-lemma-w-1}
\|\vw(t,a^s)\| \leq & \quad \int_{t_0}^t \|V^s(t,\tau)\| \|D^2_{x}\tilde F(\vu(\tau,a^s),\tau)\|\|\vz(\tau,a^s)\|^2\dtau\\
&  + \int_{t_0}^t \|V^s(t,\tau)\| \|D_{x} \tilde F(\vu(\tau,a^s),\tau)\|\|\vw(\tau,a^s)\|\dtau \\
& + \int_{t}^\infty \|V^u(t,\tau)\| \|D^2_{x}\tilde F(\vu(\tau,a^s),\tau)\|\|\vz(\tau,a^s)\|^2\dtau\\
& + \int_{t}^\infty \|V^u(t,\tau) \|\|D_{x} \tilde F(\vu(\tau,a^s),\tau)\|\|\vw(\tau,a^s)\|\dtau.
\end{align}

Recall that $F$ is defined in \eqref{eq:F-def}, and $\tilde F$ is obtained from $F$ by \eqref{eq:def-F-tilde}. From \eqref{eq:F-def}, and the assumption that $h \in C^3$, we see that $\|D^2_x F(x,t)\|$ is uniformly bounded for all $t\geq t_0$ and $x$ in a neighborhood of $0$. Since $\vu(t,a^s)\to 0$ as $t\to\infty$ and $U(t)$ is a unitary transformation, this implies that $\|D^2_{x}\tilde F(\vu(t,a^s),t)\|$ is uniformly bounded for all $t\geq t_0$.

It can be shown that $\vz(t,a_s)$ satisfies 
\begin{equation} \label{eq:z-to-zero}
|\vz(t,a^s)| \leq 2K|a^s|e^{-\alpha (t-t_0)}
\end{equation}
\citep[Equation (30)]{SMKP-TAC2020}.
Thus, we may bound the $\vw$-independent components of \eqref{eq-lemma-w-1} as
\begin{align}
& \int_{t_0}^t \|V^s(t,\tau)\| \|D^2_{x}\tilde F(\vu(\tau,a^s),\tau)\|\|\vz(\tau,a^s)\|^2\dtau\\
& +
\int_{t}^\infty \|V^u(t,\tau)\| \|D^2_{x}\tilde F(\vu(\tau,a^s),\tau)\|\|\vz(\tau,a^s)\|^2\dtau \\
\leq & C\int_{t_0}^t e^{-(\sigma+\alpha)(t-\tau)}\dtau + C\int_{t}^\infty e^{\sigma(t-\tau)}\dtau \leq  C\frac{2}{\sigma},
\end{align}
for some constant $C>0$.

Let $\e>0$ be such that $\frac{K\e}{\sigma} < \frac{1}{2}$.
Let $t_0$ be such that $\|D_x \tilde F(\vu(t,a^s),t)\| < \e$ for all $t\geq t_0$ and all $a^s$ sufficiently small.
Returning to \eqref{eq-lemma-w-1} we have
\begin{align}
\|\vw(t,a^s)\| \leq \frac{2C}{\sigma} +
K\e\int_{t_0}^t e^{-(\alpha+\sigma)(t-\tau)} \|\vw(\tau,a^s)\|\dtau +
K\e\int_{t}^\infty e^{\sigma(t-\tau)}\|\vw(\tau,a^s)\|\dtau.
\end{align}
Letting $M = \sup_{t\geq t_0} \|\vw(t,a^s)\|$ the above yields
$$
M \leq \frac{2C}{\sigma} + \frac{2K\e M}{\sigma}.
$$
Since $\frac{2K\e M}{\sigma} < \frac{M}{2}$, we get $M < \frac{4C}{\sigma},$
which concludes the proof.
\end{proof}


\section{A Key Inequality: D-SGD is Repelled from the Stable Manifold} \label{sec:D-SGD-geometry}
In this section our goal is to characterize the manner in which state-time pairs $(x,t)$ are repelled from the DGF stable manifold $\calS$ under the dynamics \eqref{dynamics_DT3}. For convenience, let 
\begin{equation} \label{eq:J-def}
J(x,t) := -\nabla h(x) - \gamma_t Q_x
\end{equation}
denote the right hand side of \eqref{eq:ODE1}. Informally, our goal is to show an inequality of the following form:
\begin{equation} \label{eq:eta-ineq}
\textup{dist}((x+\e J(x,t),t), \calS) \geq (1+c_2\e)\textup{dist}((x,t),\calS) -c_3\e^2,
\end{equation}
where here, $\dist$ refers to a notion of distance that we have not yet defined, $\e\in[0,1]$, and $c_2,c_3>0$ are constants. The idea is that the expected movement of a step of \eqref{dynamics_DT3} (which is an Euler approximation of \eqref{eq:ODE1}) pushes state-time iterates $(x,t)$ away form the stable manifold $\calS$. This will be formalized below.\footnote{Here, we treat the nonautonomous vector field as an augmented autonomous vector field with $\ddt t = 1$. Also, we note that the $\e^2$ term is simply a consequence of handling nonlinearities in the dynamics. As $\e\to 0$, this intuition holds arbitrarily close to $\calS$.}


Proving an inequality of this form will be main goal of this section. This will be accomplished in Proposition \ref{prop:eta-properties}, property 4.\footnote{The stable manifold is most easily studied under a change of coordinates. $T$ in property 4 represents the appropriate change of coordinates. The function $\eta(x,t)$, defined in \eqref{eta-def}, represents the ``distance'' from a point $(x,t)$ to the stable manifold. Note that $\eta$ is defined with respect to the coordinate change $T$, so points must pass through $T$ before being fed to $\eta$.}
(The remaining properties in Proposition \ref{prop:eta-properties} are straightforward, but are included as they will be required in Section \ref{sec:stochastic-analysis-D-SGD}.)

In broad strokes, the main ideas underlying the proof of \eqref{eq:eta-ineq} (or rather, Proposition \ref{prop:eta-properties}, property 4) are as follows. 
\begin{itemize}
    \item To simplify the analysis, we will consider a change of coordinates that ``straightens out'' the stable manifold of \eqref{eq:ODE1}. 
    This is accomplished in \eqref{eq:phi-def}--\eqref{eq:ODE-straightened}. The vector field of the ``rectified'' ODE is given by $G$, defined in \eqref{eq:ODE-straightened}. 
    \item The key inequality \eqref{eq:eta-ineq} will follow by examining a linearization of $G$. In particular, we will need the matrix representing the linearization of $G$, denoted by $\linG_t$ (see \eqref{eq:T-t-def}), to have a  \emph{uniform spectral gap}, i.e., for all $t$ sufficiently large, $\linG_t$ has at least one positive and one negative eigenvalue, and there exists some constant $c>0$ (independent of $t$) such that any eigenvalue $\lambda \in \sigma(\linG_t)$ satisfies $|\lambda| \geq c$. Once this property is obtained, it is clear that there is an exponential dichotomy near the saddle point, and the proof of \eqref{eq:eta-ineq} follows readily. Thus, a large portion of this section will focus on deriving the existence of the spectral gap for $\linG_t$. 
    \item In order to show the existence of a spectral gap, it is helpful to characterize the geometry of $\calS$ as $t\to\infty$. This is because the rectified ODE is explicitly defined to ``straighten out'' $\calS$. By showing that time slices $\calS_{t_0} := \{(x,t)\in\calS:t=t_0\}$ converge in some sense as $t_0\to\infty$, one may deduce that $D_x G$ (and hence $\linG_t$) has a simple structure as $t\to\infty$ (see Lemma \ref{lemma:T-eigvals}). 
    \item The structure of $\calS$ is elucidated by studying the ``constrained'' dynamics obtained by restricting the ODE \eqref{eq:ODE1} to $\calC$ (see \eqref{eq:ODE-autonomous}). In particular, one sees that as $t_0\to \infty$, $\calS_{t_0}$ approximates the stable manifold of the constrained system in an appropriate sense (see Lemma \eqref{lemma:Phi-limit}).
    
\end{itemize}
Above, we have tried to capture the motivation and main ideas of how we approach the proof. The precise steps we will follow are these:
\begin{enumerate}
    \item We define the rectified ODE \eqref{eq:ODE-straightened}.
    \item We define the autonomous in-constraint ODE \eqref{eq:ODE-autonomous}.
    \item We show that time slices of the non-autonomous stable manifold, given by $\calS_{t_0}$, converge to the stable manifold of \eqref{eq:ODE-autonomous} as $t_0\to\infty$ (Lemma \ref{lemma:Phi-limit}).
    \item We show that $\linG_t$ has a uniform spectral gap (Lemma \ref{lemma:T-eigvals}). 
    \item We show that the desired inequality for $\eta$ holds in the context of the rectified system (Lemma \ref{lemma:d-ineq}).
    \item We prove \eqref{eq:eta-ineq} (Proposition \ref{prop:eta-properties}, item 4).
\end{enumerate}



\bigskip
We now proceed as outlined above. 
Without loss of generality we assume $x^*=0$ and let $\C = \myspan\{e_1,e_2,\ldots,e_{\dim\C}\}$, where $e_i$ denotes the $i$-th canonical vector in $R^M$. Let the constraint space $\C$ be decomposed as
$$
\C = E_{s} + E_{u},
$$
where
$$
E_s := \myspan\left\{x\in \C: \Hess h(0)x = \lambda x, \lambda < 0\right\} \quad \text{ and } \quad E_u := \myspan\left\{x\in \C: \Hess h(0)x = \lambda x , \lambda \geq 0\right\}.
$$
Here, $E_s$ and $E_u$ correspond to the stable and unstable eigenspaces of the gradient-flow system restricted to $\C$ and linearized about the origin.

The (nonautonomous) stable manifold $\calS$ constructed in Section \ref{sec:CT-stable-manifold} may be represented locally  as a function $\psi:E_{s}\times \C^{\perp}\times [0,\infty)\to E_{u}$.
More precisely, the stable manifold may be represented locally as\footnote{Note that a stable input $a_s$ for \eqref{eq:psi-S-def} corresponds to $a_s = (x_s,x_{nc})$. The idea is that for $t$ sufficiently large, these coordinates are the stable coordinates in the sense that they correspond to negative eigenvalues in $\Lambda(t)$, given in \eqref{def:Lambda-t}. }
$$
\calS = \{(x,t):\, x_u = \psi(x_s,x_{nc},t),\, t\geq t_0, \|x_s\| < \delta,  \|x_{nc}\| <  \delta\},
$$
for some $\delta > 0$. Note that this representation of the stable manifold is with respect to the coordinate change discussed in Section \ref{sec:stable-man-key-defs}.

For convenience, we now construct a map which flattens out the stable manifold. Namely, we define
\begin{equation} \label{eq:phi-def}
\Phi(x,t) \coloneqq \begin{pmatrix} x_u \\ x_s \\ x_{nc}\end{pmatrix} -
\begin{pmatrix}
\psi(x_s,x_{nc},t)\\
0\\
0
\end{pmatrix}.
\end{equation}
This function locally maps the stable manifold $\mathcal{S}$ to the subspace $\{(y,t): y_i = 0 \text{ for } i \in 1 \dots n_u\}:=\calU$, where $n_u$ is the number of unstable coordinates.

Next, we notice that
\begin{equation}\label{eqn:Dx-Phi}
    D_x \Phi(x,t) = \begin{pmatrix} I_{n_u} & D_{(x_s,x_{nc})} \psi \\ 0 & I_{M-n_u}\end{pmatrix},
\end{equation}
where $D_{(x_s,x_{nc})}$ denotes a single derivative bundling the $x_s$ and $x_{nc}$ coordinates.
Since $D\psi(0,t) = 0$ (Lemma 25 in \citep{SMKP-TAC2020}; see also \eqref{eq:z-to-zero} above)
and since $\psi$ is $C^1$ in $x$ uniformly in time (Lemma \ref{lemma:manifold-C2}), we may use the inverse function theorem to establish that there exist a $C^1$ function $x\mapsto\Phi^{-1}(\cdot,t)$ in some ball $B(0,r)$, for any time $t$. We emphasize that $\Phi^{-1}(\cdot,t)$ inverts the first argument given a time $t$.

Now, suppose that $\vx(t)$ satisfies the ODE $\dot \vx = H(\vx,t)$, with $H$ as given in \eqref{eq:H-vec-field-def}. If we let $\vw(t) = \Phi(\vx(t),t)$, then, by construction of $\Phi$, the space $\calU := \{x\in \R^{M}: x_i = 0 \text{ for } i \in 1 \dots n_u\}$ is (forward) invariant for $\vw$; i.e., $\vw(t_0) \in \calU \implies \vw(t) \in\calU$ for all $t\geq t_0$. A chain rule computation shows that $\vw$ satisfies the ODE
\begin{equation} \label{eq:ODE-straightened}
    \dot \vw = G(\vw,t) \coloneqq D_x [\Phi,\Phi^{-1}(\vw,t)] H(\Phi^{-1}(\vw,t),t) + D_t[ \Phi,(\Phi^{-1}(\vw,t),t)].
\end{equation}
In particular, note that $\calU$ is the ``straightened out'' stable manifold for the above ODE. Consequently, we sometimes refer to \eqref{eq:ODE-straightened} as the rectified ODE. 

Our first result in this section will be to show that $\Phi(\cdot,t)$ converges to a limit as $t\to\infty$. Equivalently, this may be thought of as showing that ``time slices'' of the stable manifold converge to a limit as $t\to\infty$.
To this end, consider the (autonomous) ODE 
\begin{equation} \label{eq:ODE-autonomous}
\ddt \tilde\vx(t) = -\nabla h\vert_\C(\tilde\vx(t)),
\end{equation}
with $\tilde\vx:[0,\infty)\to \R^{\dim \C}$. Supposing that $0$ is a regular saddle point of $h\vert_\C$, by the classical stable manifold theorem  there exists a stable manifold $\calS^\star$ for \eqref{eq:ODE-autonomous}, associated with the rest point at the origin \citep{chicone2006ordinary}. Let $\psi^\star:\R^{\dim(\C) - n_u} \to \R^{n_u}$ be the function defining $\calS^\star$, i.e.,
\begin{equation} \label{def:S-star}
\calS^{\star} = \left\{x\in \R^{\dim\C}:\, x_u = \psi^\star(x_s)\right\}.
\end{equation}
Let $\Phi^\star:\R^{\dim\C} \to \R^{\dim\C}$ be given by
\begin{equation} \label{def:Phi-star}
\Phi^\star(x) \coloneqq
\begin{pmatrix}
x_u\\
x_s
\end{pmatrix}
-
\begin{pmatrix}
\psi^\star(x_s)\\
0
\end{pmatrix},
\end{equation}
where here $0$ denotes the zero vector in $\R^{\dim\C-n_u}$.
Here, $\Phi^\star$ serves an analogous role to $\Phi$ in \eqref{eq:phi-def}, straightening out the stable manifold of the autonomous system into the stable eigenspace of the autonomous system.

Until now, we have been able to restrict our analysis of \eqref{eq:ODE1} to a neighborhood of $x^*$ (where, by Assumption \ref{a:h-C3}, $h$ is smooth). In the following lemma, we consider global behavior of \eqref{eq:ODE1}. Consequently, we must treat  \eqref{eq:ODE1} as a differential \emph{inclusion} rather than a differential equation. (This will be the only point in the paper where treating the differential inclusion for the continuous dynamics is explicitly required.) In particular, instead of \eqref{eq:ODE1}, consider the differential inclusion
\begin{equation}\label{eq:DI-1}
\dot \vx \in - \gamma_t Q\vx -\partial h (\vx).
\end{equation}
Note that this is equivalent to \eqref{eq:ODE1} in a neighborhood of a saddle point satisfying Assumption \ref{a:h-C3}. We will say that $\vx:\R\to\R^M$ is a solution to \eqref{eq:DI-1} if it is absolutely continuous and satisfies \eqref{eq:DI-1} for almost all $t$. Under Assumption \ref{a:h-loc-lip}, the set $\partial h(x)$ is a nonempty, convex, and compact \citep{clarke1990optimization}. Consequently, solutions to \eqref{eq:DI-1} in the above sense exist (though they may not be unique) \citep{aubin2012differential}.

The next lemma shows that
that we obtain uniform convergence to $\C$ for initializations in a neighborhood of the origin. 
\begin{lemma}[Uniform convergence to $\C$] \label{lemma:consensus-uniform}
Suppose Assumptions \ref{a:h-loc-lip}, \ref{a:coercive-h}, and \ref{a:Q-PSD1} hold. For any open neighborhood $\calN$ of $0$ and any $\e>0$ there exists a $\bar t>0$ such that for any solution $\vx(t)$ of \eqref{eq:DI-1} with initial condition $\vx(t_0) = x_0\in\calN$, $t_0\geq 0$, there holds $\dist(\vx(t),\C) \leq \e$ for all $t\geq \bar t$.
\end{lemma}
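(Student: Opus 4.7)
The plan is to first pin the trajectory inside a fixed compact set via the coercivity hypothesis, and then to exploit $\gamma_t \to \infty$ to drive the component orthogonal to $\C$ to zero uniformly in the initial data $(x_0, t_0)$.

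\textbf{Step 1 (Bounded invariant set).} Without loss of generality $\calN \subset B_{R_0}(0)$ for some $R_0 > 0$; set $R^* := \max(R, R_0)$. For any solution $\vx$ of \eqref{eq:DI-1} with measurable selection $v(t) \in \partial h(\vx(t))$ satisfying $\dot \vx(t) = -\gamma_t Q \vx(t) - v(t)$ for almost every $t$,
\[
\tfrac{d}{dt}\tfrac{1}{2}\|\vx(t)\|^2 = -\gamma_t \vx(t)^\T Q \vx(t) - \langle \vx(t), v(t)\rangle.
\]
The first term is nonpositive by Assumption \ref{a:Q-PSD1} (combined with $\gamma_t \geq 0$), and the second is nonpositive whenever $\|\vx(t)\| \geq R$ by the first part of Assumption \ref{a:coercive-h}. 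Hence $\overline{B_{R^*}(0)}$ is forward invariant, and $\|\vx(t)\| \leq R^*$ for all $t \geq t_0$.

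\textbf{Step 2 (Scalar differential inequality).} Work in the coordinates of \eqref{eq:Q-diag-form} with $\widehat Q \succ 0$, and let $\lambda_* > 0$ denote its smallest eigenvalue. Since $h$ is locally Lipschitz (Assumption \ref{a:h-loc-lip}), there is a constant $L$ bounding $\|v\|$ for every $v \in \partial h(x)$ with $x \in \overline{B_{R^*}(0)}$. Setting $y(t) := \|\vx_{nc}(t)\|$, the off-constraint component satisfies $\dot \vx_{nc}(t) = -\gamma_t \widehat Q \vx_{nc}(t) - v_{nc}(t)$, and combining Cauchy--Schwarz with $\langle z, \widehat Q z\rangle \geq \lambda_* \|z\|^2$ yields
\[
y'(t) \leq -\lambda_* \gamma_t y(t) + L.
\]

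\textbf{Step 3 (Uniform threshold).} Fix $T_0$ with $\gamma_t \geq 4L/(\lambda_* \e)$ for all $t \geq T_0$, possible since $\gamma_t \to \infty$. For $t_0 \leq T_0$ the crude bound $y' \leq L$ gives $y(T_0) \leq R^* + L T_0$; for $t_0 > T_0$ we simply have $y(t_0) \leq R^*$. In either regime, whenever $t \geq \max(t_0, T_0)$ and $y(t) \geq \e/2$, the Step 2 inequality forces $y'(t) \leq -L$, so $y$ falls below $\e/2$ within a time of at most $(R^* + L T_0)/L$ and remains below $\e$ thereafter. This yields a $\bar t$ depending only on $\calN$ and $\e$, as claimed.

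The principal obstacle is maintaining uniformity across $t_0 \geq 0$. When $t_0$ is small the system must wait until $\gamma_t$ dominates the bounded drift before the contraction kicks in, whereas when $t_0$ is large the initial value $y(t_0)$ can be nearly $R^*$ but the large value of $\gamma_{t_0}$ yields immediate strong contraction. The a priori bound of Step 1 makes the drift constant $L$ independent of $(x_0, t_0)$, and the divergence $\gamma_t \to \infty$ makes the eventual contraction rate independent of $t_0$; together these reconcile both regimes into a single threshold.
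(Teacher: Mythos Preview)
Your proof is correct and follows essentially the same approach as the paper's: both trap trajectories in a compact set via coercivity, bound the generalized gradient by the local Lipschitz constant over that set, and then use a scalar differential inequality for $\|\vx_{nc}\|$ together with $\gamma_t\to\infty$ to drive the off-constraint component below $\e$ uniformly. If anything, your Step~1 is more explicit than the paper's, which simply asserts the existence of an invariant compact set without spelling out the Lyapunov computation.
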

\begin{remark}
The assumption that $h$ is coercive (Assumption \ref{a:coercive-h}) in Theorem \ref{thrm:saddle-instability} stems from this lemma. 
\end{remark}
\begin{proof}
Without loss of generality, let $\C$ be as given in \eqref{eq:C-rotation} and let $\vx(t)$ be decomposed as
$$
\vx(t) =
\begin{pmatrix}
\vx_c(t)\\
\vx_{nc}(t)
\end{pmatrix}.
$$
By Assumption \ref{a:coercive-h} there exists a compact set $K \ni 0$, $K \subset \R^M$ that is invariant under \eqref{eq:ODE1}. Without loss of generality, assume that $\calN \subset K$. Let $\overline{M} = \sup\{\|z\|:z\in \partial h(x),~ x\in K\}$. Let $\lambda_{\text{min}}$ be the smallest positive eigenvalue of $Q$. Choose $t_1$ so that $\gamma_t \lambda_{\text{min}} \e > 2\overline{M}$ for all $t\geq t_1$. Then for all $t\geq t_1$ and all $\vx$ with $\| \vx_{nc} (t)\| > \e$ we have
\begin{align}
\ddt \| \vx_{nc}(t)\| &= \frac{ \vx_{nc}(t)}{\| \vx_{nc}(t)\|} \cdot \ddt \vx_{nc}(t) \leq \overline{M} - \gamma_t \lambda_{\text{min}} \e\\
& \leq -\overline{M}.
\end{align}
Hence, for $t\geq t_1$, $\|\vx_{nc}(t)\| \leq \|\vx_{nc}(t_1)\| - \bar M(t-t_1)$. But since $K$ is invariant we have $\|\vx_{nc}(t)\| \leq C$ for $C= \sup_{x\in K} \|x\|$. Setting $\bar t = t_1 +\frac{C - \e}{\bar M}$ yields the desired result.
\end{proof}

The following lemma shows that $x\mapsto \Phi(x,t)$ has a limit as $t\to\infty$. We will require the following definition. Let 
$$\Proj_\calC := (I_{\dim\C} ~ 0)\in\R^{\dim\C\times M}$$ 
be the orthogonal projection onto $\C = \myspan\{e_1,\ldots,e_{\dim\C}\}$, where here $0$ is the zero matrix of appropriate dimension.
\begin{lemma} \label{lemma:Phi-limit}
Suppose Assumptions \ref{a:coercive-h}, \ref{a:Q-PSD1} and \ref{a:eigvec-continuity-h}--\ref{a:h-C3} hold and that $0$ is a regular saddle point of $h\vert_\C$. Let $\Phi$ and $\Phi^\star$ be the maps \eqref{eq:phi-def} and \eqref{def:Phi-star} respectively. Let $x$ be decomposed as $x = (x_u,x_s,x_{nc})$. For all $x$ in a neighborhood of $0$ we have that $\lim_{t \to \infty} P_\C \Phi(x_u,x_s,x_{nc},t) = \Phi^\star(x_u,x_s)$.
\end{lemma}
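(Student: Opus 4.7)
My plan is to reduce the claim to a convergence statement for the graph functions that define $\calS$ and $\calS^\star$, and then exploit the fact that both stable manifolds are characterized as sets of initial conditions whose forward trajectories converge to the saddle point. First, I would unpack the definitions: writing $x = (x_u, x_s, x_{nc})$, the definition \eqref{eq:phi-def} gives $P_\calC \Phi(x, t) = (x_u - \psi(x_s, x_{nc}, t), \, x_s)$ since $P_\calC$ annihilates the $x_{nc}$ block, while \eqref{def:Phi-star} gives $\Phi^\star(x_u, x_s) = (x_u - \psi^\star(x_s), \, x_s)$. It therefore suffices to show that for each $(x_s, x_{nc})$ in a fixed neighborhood of $0$,
\begin{equation*}
\lim_{t_0 \to \infty} \psi(x_s, x_{nc}, t_0) = \psi^\star(x_s).
\end{equation*}

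The main argument extracts subsequential limits and identifies them with $\psi^\star(x_s)$ via the autonomous flow. Given any sequence $t_{0,k} \to \infty$, the bound \eqref{eq:u-bound} (evaluated at $t = t_0$) shows $\|\psi(x_s, x_{nc}, t_{0,k})\| \leq 2K(1 + \|(x_s,x_{nc})\|)$, so after passing to a subsequence one may assume $\psi(x_s, x_{nc}, t_{0,k}) \to \bar x_u$ for some $\bar x_u$. Let $\vx^{(k)}$ be the solution of \eqref{eq:ODE1} with $\vx^{(k)}(t_{0,k}) = (\psi(x_s, x_{nc}, t_{0,k}), x_s, x_{nc})$; by construction this point lies on the stable manifold at time $t_{0,k}$, so $\vx^{(k)}(t) \to 0$ as $t \to \infty$. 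I would then compare the shifted trajectory $\vy^{(k)}(s) := \vx^{(k)}(s + t_{0,k})$ with the autonomous trajectory $\tilde\vx^\star(s)$ of \eqref{eq:ODE-autonomous} starting at $(\bar x_u, x_s)$. Since $Q$ annihilates $\calC$, the $\calC$-projection of the right-hand side of \eqref{eq:ODE1} is just $-P_\calC \nabla h(\vy^{(k)})$, and Lemma \ref{lemma:consensus-uniform} combined with $\gamma_{s+t_{0,k}} \to \infty$ forces $\vy_{nc}^{(k)}(s) \to 0$ on any compact $s$-interval bounded away from $0$. Continuity of $\nabla h$ and continuous dependence of ODE solutions on initial conditions then yield $P_\calC \vy^{(k)}(s) \to \tilde\vx^\star(s)$ uniformly on each bounded $s$-interval. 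Translating \eqref{eq:u-bound} back to the original coordinates via $\vx = U^\T\vu + g(\gamma_t)$ gives $\|\vy^{(k)}(s)\| \leq 2K(1 + \|(x_s, x_{nc})\|)\, e^{-\alpha s} + |g(\gamma_{s + t_{0,k}})|$ with constants $K, \alpha$ independent of $k$; sending $k \to \infty$ the limit inherits $\|\tilde\vx^\star(s)\| \leq 2K(1 + \|(x_s, x_{nc})\|)\, e^{-\alpha s}$, so $\tilde\vx^\star(s) \to 0$. By the classical stable-manifold theorem applied to \eqref{eq:ODE-autonomous}, this forces $(\bar x_u, x_s) \in \calS^\star$, i.e., $\bar x_u = \psi^\star(x_s)$. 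Since every convergent subsequence of the (precompact) sequence $\{\psi(x_s, x_{nc}, t_0)\}$ has the same limit $\psi^\star(x_s)$, the whole sequence converges.

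The main obstacle is the final decay-to-zero step. Routine continuous dependence only delivers convergence of $P_\calC \vy^{(k)} \to \tilde\vx^\star$ on bounded $s$-intervals; concluding that the limit trajectory itself decays to $0$ requires a \emph{uniform-in-$k$} exponential bound on $\vy^{(k)}$. This is exactly what \eqref{eq:u-bound} supplies, because the constants $K, \alpha$ there depend only on the spectral gap of $\Lambda(t)$ for large $t$ (guaranteed by the regularity of the saddle point and Assumption \ref{a:eigvec-continuity-h}), not on the initial time $t_{0,k}$, so the bound passes to the $k \to \infty$ limit intact.
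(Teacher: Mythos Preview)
Your approach is correct and takes a genuinely different route from the paper. The paper proves the lemma by a direct term-by-term comparison of the integral equations \eqref{eq:integral-eq0} and \eqref{eq:int-eq-in-constraint} that define $\psi$ and $\psi^\star$: it writes out $P_\calC T^{-1}(\vu(t_0,t_0,\tilde a^s),t_0) - \vw(t_0,a_c^s)$ explicitly, splits the difference into three pieces, and bounds each using the asymptotics of $V^s, V^u, \hat V^s, \hat V^u$ (via Lemma~\ref{lemma:eigvalue-convergence}), the Lipschitz estimates for $\tilde F$ and $\hat F$, the convergence $U(t)\to I$, and the arc-length bound from Lemma~\ref{lemma:g-existence}. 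Your argument is instead dynamical: extract a subsequential limit of $\psi(x_s,x_{nc},t_{0,k})$, push forward the corresponding point on $\calS$ under the flow, show that the $\calC$-projection of the shifted trajectory converges on compact $s$-intervals to an autonomous trajectory of \eqref{eq:ODE-autonomous}, transfer the uniform exponential decay \eqref{eq:u-bound} to the limit, and then invoke the characterization of $\calS^\star$ as the local set of initial conditions converging to the hyperbolic equilibrium. The paper's computation is more constructive (and would yield rates); your route is more conceptual and sidesteps the integral-equation bookkeeping. Two points in your write-up deserve a bit more care: (i) $\psi$ is defined in the $z$-coordinate system \eqref{eq:T-coord-change} while \eqref{eq:ODE1} and $\psi^\star$ live in the original coordinates, so your initial data should pass through $T^{-1}$ before flowing by \eqref{eq:ODE1}; since $U(t)\to I$ and $g(\gamma_t)\to 0$ this is asymptotically harmless, but it should be said; and (ii) ``continuous dependence on initial conditions'' is really an Arzel\`a--Ascoli/limit-of-vector-fields argument for $P_\calC\vy^{(k)}$, because both the initial data and the effective right-hand side (through $\vy_{nc}^{(k)}$) are moving with $k$.
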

\begin{proof}
Recall that $\Phi$ and $\Phi^\star$ are defined using the stable manifolds of \eqref{eq:ODE1} and \eqref{eq:ODE-autonomous}. The stable manifolds for each ODE, in turn, are constructed using appropriate integral equations (e.g., \eqref{eq:integral-eq0} and \citep[Ch. 4]{chicone2006ordinary}). We will prove the theorem by recasting the result in terms of the integral equations defining $\calS$ and $\calS^\star$.

Let $U(t)\in\R^{M\times M}$ be the diagonalization of $A(t)$ defined after \eqref{def:Lambda-t}. We remind the reader that $A(t)$ is the linear part of the evolution of $\vy$, which is governed by the nonautonomous differential equation after recentering at the perturbed saddle point $g(\gamma_t)$; see equations \eqref{eq:A-matrix-def}--\eqref{def:Lambda-t}.
Let $B$ be given by 
\begin{equation}\label{def:B}
B := -\Hess h\vert_\calC(0),
\end{equation}
so that $B$ represents the linearization of \eqref{eq:ODE-autonomous} about the origin,
and let $U\in\R^{\dim\C\times \dim\C}$ be a unitary matrix that diagonalizes $B$ so $B = U\Lambda^\star U^\T$,
where $\Lambda^\star = \myDiag(\Lambda^{\star,u},\Lambda^{\star,s})$, and where $\Lambda^{\star,s}\in R^{(\dim\C-n_u)\times (\dim\C-n_u)}$ has positive diagonal entries and $\Lambda^{\star,u}\in \R^{n_u\times n_u}$ has negative diagonal entries. Thus far, we have assumed coordinates to be rotated so that $\calC = \myspan\{e_1,\ldots,e_{\dim\C}\}$. Without loss of generality, we will now assume a rotation of coordinates within $\C$; namely, we will assume that $U=I_{\dim\C}$. 

Analogous to \eqref{eq:V-def}, define
\begin{equation} \label{eq:hat-V-def}
\hat V^u(t_2,t_1) \coloneqq
\begin{pmatrix}
e^{ \Lambda^{\star,u}(t_2-t_1)} & 0\\
0 & 0\\
\end{pmatrix}, \quad\quad
\hat V^s(t_2,t_1) \coloneqq
\begin{pmatrix}
0 & 0\\
0 & e^{\Lambda^{\star,s}(t_2-t_1)}\\
\end{pmatrix}.
\end{equation}
Finally, for $x_c\in \R^{\dim\C}$, let $\hat F(x_c) := -\nabla h\vert_\C(x_c) - B_cx$.
Solutions to the following equation define the ``classical'' stable manifold of the ($\calC$-restricted) gradient system  \eqref{eq:ODE-autonomous}   \citep[Ch. 4]{chicone2006ordinary}
\begin{equation} \label{eq:int-eq-in-constraint}
\vw(t,a^s) = \hat V^s(t,t_0)
\begin{pmatrix}
0\\
a_c^s
\end{pmatrix}
+
\int_{t_0}^t \hat V^s(t,\tau)\hat F(\vw(\tau,a_c^s))\dtau - \int_{t}^\infty V^u(t,\tau)\hat F(\vw(\tau,a_c^s))\dtau,
\end{equation}
where $a^s_c\in \R^{\dim\C-n_u}$. Note that this is the classical analog of \eqref{eq:integral-eq0}. We emphasize that in this classical setting one does not have any $t$ dependence (outside of $\vw$) within the integrals; the inclusion of $t$ dependence in $\tilde F$ necessitated the analysis in \citep{SMKP-TAC2020}.

Let $T:\R^{M}\times[0,\infty)\to\R^M$ be given by
\begin{equation} \label{eq:T-coord-change}
T(u,t) := U(t)\left(u - g(\gamma_t)\right)
\end{equation}
where $g(\gamma)$ and $U(t)$ are defined in Section \ref{sec:stable-man-key-defs}. Note that $T$ is the coordinate
transformation used to recenter and diagonalize in the computation of the nonautonomous stable manifold (see Lemma 24 in \citep{SMKP-TAC2020}). Note that $T^{-1}(u,t) :=
U^\T (t)u + g(\gamma_t)$
is well defined and (i) by Assumption \ref{a:eigvec-continuity-h} we have $U(t)\to U= I$
and (ii) by Lemma \ref{lemma:g-existence} we have $g(\gamma_t)\to 0$ as $t\to\infty$.

Given $a^s \in \R^{n_s}$ (where $n_s=M-n_u$) 
let $\vu(t,a^s)$ be the solution to \eqref{eq:integral-eq0}. Let $a^s_c = P_\C a^s$ (the subscripts indicating the `in-constraint' component of $a^s$) and let $\vw(t,a_c^s)$ be the solution to \eqref{eq:int-eq-in-constraint} given $a_c^s$.

Recall that $\Phi$ and $\Phi^\star$ (\eqref{eq:phi-def} and \eqref{def:Phi-star}) are defined in terms of $\psi$ and $\psi^\star$.
In turn, $\psi$ is defined in \eqref{eq:psi-S-def} and $\psi^\star$ is defined as
\begin{equation}\label{eq:psi-star-def}
\psi^\star_j(a_{c}^s) := \vw(t_0,a_c^s),
\end{equation}
$j=1,\ldots,n_u$, where $t\mapsto\vw(t,a_c^s)$ is the solution to \eqref{eq:int-eq-in-constraint} given $a_c^s$ and initialization time $t_0$ \citep[Ch. 4]{chicone2006ordinary}.\footnote{We remark that, in an abuse of notation $\psi$ and $\psi^\star$ have been defined using the natural indexing associated with $\vw$ and $\vu$. This has been done to sidestep the distracting minutia of indexing. We emphasize that these are simply maps with domain/codomain $\psi:\R^{n_s}\to\R^{n_u}$ and $\psi^\star:\R^{\dim\C - n_u}\to\R^{n_u}$.} Thus, loosely speaking, to show the claim in the lemma, it suffices to show that the relevant coordinates of $\vu(t_0,a_s)$ and $\vw(t_0,a_c^s)$ converge in some appropriate sense as $t_0\to\infty$. 
However, since $\vu$ was constructed with respect to the coordinate change \eqref{eq:T-coord-change},
in order to compare $\vu$ and $\vw$ we first need to move them into comparable coordinate systems. 

To make this precise,  
recall that in \eqref{eq:integral-eq0}, $\vu$ was defined using the arguments $\vu(t,t_0,a^s)$ (though we have typically suppressed the argument $t_0$ for brevity). Below, we will require explicit mention of the initial time argument $t_0$. The lemma holds if we show that
\begin{equation} \label{eq:lemma3-eq1}
\lim_{t_0\to\infty} \|\Proj_\calC T^{-1}\Big(\vu\big(t_0,t_0,\tilde a^s\big),t_0\Big) - \vw(t_0,a_c^s)\| = 0,
\end{equation}
where $\tilde a^s = T((a^s,0),t_0)$, $a_s\in \R^{M-n_u}$, $0\in \R^{n_u}$ (this simply shifts $a_s$ into the appropriate coordinate system).
In words, \eqref{eq:lemma3-eq1} may be interpreted as follows: 
\begin{enumerate}
\item We are given some ``stable'' initialization $a_s\in\R^{\dim\C}$ and a time $t_0$. 
\item  $a_c^s$ extracts the in-constraint components of $a^s$ and $\vw(t_0,a_s^c)$ returns the unique point in $\R^{\dim\C}$ lying on $\calS^\star$ corresponding to $a_c^s$ (see \eqref{eq:psi-star-def} and \eqref{def:S-star}). 
\item  $\tilde a^s$ is the transformation of $a_s$ into the alternate coordinate system (via \eqref{eq:T-coord-change}). $\vu(t_0,(t_0,\tilde a^s)))$ returns the unique point in $\R^M$ on the stable manifold $\calS$ (defined with respect to the alternate coordinate system) corresponding to $\tilde a^s$. Afterwards, $T^{-1}(\cdot, t_0)$ moves this point back into the regular coordinate system in $\R^M$. Finally, the projection $\calP_\C$ projects this down to the $d$-dimensional space $\calC$ so it may be compared with $\vw(t_0,a_c^s)$.
\end{enumerate}




Given that we wish to show \eqref{eq:lemma3-eq1}, for brevity of notation we will again return to expressing $\vu$ as a function of two arguments so that $\vu(t,a^s)$ means $\vu(t,t_0,a^s)$.
Explicitly expanding $T^{-1}(\vu(t_0,\tilde a^s),t_0)$ and using \eqref{eq:integral-eq0} we have
\begin{align}
T^{-1}(\vu(t_0,\tilde a^s),t_0) = & U^\T(t_0)
\begin{pmatrix}
\tilde a^s\\
0
\end{pmatrix}
 - U^\T (t_0)\int_{t_0}^\infty V^u(t_0,\tau)\tilde F(\vu(\tau,\tilde a^s),\tau)\dtau \\
& - U^\T (t_0)\int_{t}^\infty V^u(t_0,\tau)U(\tau) g'(\gamma_\tau)\dot\gamma_\tau\dtau + g(\gamma_{t_0}).
\end{align}
Using this expression along with \eqref{eq:int-eq-in-constraint} and the triangle inequality,  we obtain the bound
\begin{equation} \label{eq:lemma2-eq-main-bound}
\|\Proj_\calC T^{-1}(\vu(t_0,\tilde a^s),t_0) - \vw(t_0,a^s_c)\| \leq (a) + (b) + (c),
\end{equation}
where
\begin{align}
(a) & = \Big\|\Proj_\calC\left[ U^\T (t_0)\begin{pmatrix}
\tilde a^s\\
0
\end{pmatrix} + g(\gamma_{t_0})\right] - \begin{pmatrix}
a^s_c\\
0
\end{pmatrix} \Big\|\\
(b) & = \left\|\int_{t_0}^\infty \Proj_\calC U^\T (t_0)V^u(t_0,\tau)\tilde F(\vu(t_0,\tilde a^s),t_0)\dtau - \int_{t_0}^\infty \hat V^u(t_0,\tau) \hat F(\vw(t_0,a^s_c))\dtau\right\|\\
(c) & = \left\|  U^\T(t_0)\int_{t_0}^\infty V^u(t_0,\tau)U(\tau) g'(\gamma_\tau)\dot\gamma_\tau\dtau \right\|
\end{align}

We will bound each of these in turn. Beginning with $(a)$, recall that $\tilde a^s = T((a^s,0),t_0)$ and note that the bracketed term simply gives $T^{-1}(\tilde a^s,t_0) = (a^s,0)$, so, term $(a)$ is zero.


We now consider $(b)$. Let $\hat V_{aug}(t_0,\tau) = (\hat V(t_0,\tau) ~ 0)\in\R^{d\times M}$ be an augmented version of $\hat V$. Suppressing arguments we have,

\begin{align} \label{eq:lemma2-eq2}
\left|\int_{t_0}^{\infty} \left( \Proj_\C U^\T V^u\tilde F\dtau - \hat V^u \hat F\right)\dtau\right| & \leq 
\int_{t_0}^\infty \left| (\Proj_\C U^\T V^u - \hat V_{aug}^u)\tilde F +\hat V_{aug}^u\tilde F - \hat V^u\hat F\right|\dtau\\
& \leq \int_{t_0}^\infty \left| (\Proj_\C U^\T V^u - \hat V_{aug}^u)\tilde F\right|\dtau + \int_{t_0}^\infty \left| \hat V^u(P_\calC\tilde F - \hat F) \right|\dtau,
\end{align}
where in the last line we simply observe that $\hat V_{aug}^u = (\hat V^u ~ 0) \tilde F = \hat V^u P_\C \tilde F$. 

We now bound the right hand side of \eqref{eq:lemma2-eq2}, beginning with the second term.
By construction, we have $\tilde F(0,t_0) = 0$ and $\hat F(0)= 0$. Moreover, by construction $\tilde F(\cdot,t_0)$ and $\hat F$ are uniformly Lipschitz in a neighborhood of 0.
By \eqref{eq:u-bound} (and a similar argument for $\vw$), we have $\vu(\tau,a_s) \leq c e^{-\alpha(\tau-t_0)}$ and $\vw(\tau,a_s) \leq ce^{-\alpha(\tau-t_0)}$ for some constants $c>0$ and $\alpha>0$, independent of $t_0$.
Hence, 
$$
|\Proj_\C  \tilde F(\vu(t_0,\tilde a_s),t_0) - \hat F(\vw(t_0,a_c^s))| \leq 2ce^{-\alpha(t-t_0)},
$$
and, since $\hat V^u(t_0,\tau)$ is uniformly bounded for $\tau\geq t_0$, we have
\begin{align}
\lim_{t_0\to\infty} \int_{t_0}^{\infty}|\hat V^u(t_0,\tau)(\Proj_\C \tilde F(\vu(\tau,a^s),\tau) - \hat F(\vw(\tau,a^s_c)))|\dtau = 0.
\end{align}
We now bound the first term in on the RHS of \eqref{eq:lemma2-eq2}.
Let $\Lambda(t)$ be as defined in \eqref{def:Lambda-t}.
Let the elements $\lambda_1(t),\ldots,\lambda_M(t)$ of $\Lambda(t)$ be ordered respecting the ordering assumed for $x$ earlier 
(see \eqref{eq:phi-def}) and similarly for elements of $\Lambda$.
By Lemma \ref{lemma:eigvalue-convergence} we see that $\lambda_i(t)\to \lambda_i$, $i=1,\ldots,\dim\C$. Thus, 
$$
\lim_{t_0\to\infty} \sup_{\tau\geq t_0} \left|e^{\lambda_i(\tau-t_0)} - e^{\int_{t_0}^{\tau} \lambda_i(s)\ds}\right| =0
$$
for $i=1,\ldots,n_s$. Since $\Proj_\C U^\T(t_0) \to (I_{\dim\C} ~ 0)$ this implies
\begin{equation} \label{eq:lemma2-eq3}
\lim_{t_0\to\infty} \sup_{\tau\geq t_0}
\Big|\Proj_\calC U^\T(t_0) V^s(t_0,\tau)\begin{pmatrix}
\tilde a^s\\
0
\end{pmatrix} - \hat V^s(t_0,\tau)\begin{pmatrix}
a^s_c\\
0
\end{pmatrix}\Big| = 0,
\end{equation}
where we recall that $\tilde a^s$ is given after \eqref{eq:lemma3-eq1} and $a^s_c = P_\C a^s$. 

By \eqref{eq:lemma2-eq3}, we have that $|\Proj_\C U^\T(t_0) V^u(t_0,\tau) - \hat V^u(t_0,\tau))| \leq c$ for some constant $c>0$ for all $t_0$ sufficiently large and all $t\geq t_0$.
Again using 
the facts that $\tilde F(0,t) = 0$ for all $t$, $\tilde F(\cdot,t)$ is uniformly Lipschitz in a ball about zero, and 
the estimate that
$|\vu(t,a^s)| \leq e^{-\alpha(t-t_0)}$ holds in a ball about the origin, uniformly for all $t$, we get that
$$
\lim_{t_0\to\infty} \int_{t_0}^{\infty} (\Proj_\C U^\T(t_0) V^u(t_0,\tau) - \hat V^u(t_0,\tau))\tilde F(\vu(t_0,a^s),\tau)\dtau = 0.
$$


Finally, handling $(c)$, we note that, as $\int_{t_0}^\infty |g'(\gamma_\tau) \dot \gamma_\tau|d\tau < \infty$ by Lemma \ref{lemma:g-existence}, and as $V^u$ is bounded, we can deduce that
\begin{equation}\label{eq:hand-waved}
 \lim_{t_0\to \infty} U^\T (t_0)\int_{t_0}^\infty V^u(t_0,\tau)U(\tau) g'(\gamma_\tau)\dot\gamma_\tau\dtau = 0.
\end{equation}

This accounts for all terms on the right hand side of \eqref{eq:lemma2-eq-main-bound}, thus giving \eqref{eq:lemma3-eq1}.
\end{proof}

\begin{remark}
The intuitive interpretation of Lemma \ref{lemma:Phi-limit} is that the stable manifolds of \eqref{eq:ODE1} and \eqref{eq:ODE-autonomous} asymptotically align, in some sense. A technical but important detail to note when interpreting this lemma is that $\Phi$ is defined with respect to the change of coordinates $T$ (see \eqref{eq:T-coord-change}) while $\Phi^*$ is not. However, since $g(\gamma_t)\to 0$ and the rotation matrix $U(t)$ asymptotically aligns with $U$, $\Phi$ and $\Phi^*$ are \emph{asymptotically} defined with respect to the same coordinate system.
\end{remark}


When approximating D-SGD with DGF, we will make the following assumption about the growth rate of $\gamma_t$ for the continuous-time process. When we approximate D-SGD, this assumption will easily be satisfied as a consequence of Assumption \ref{a:weights-h}.

\begin{newassumption} \label{ass-gamma-rate}
$\gamma_t$ takes the form $\gamma_t = \Theta(t^r)$ for some $r>0$.
\end{newassumption}
\begin{lemma} \label{lemma:gamma-condition}
Suppose Assumption \ref{ass-gamma-rate} holds.
The following technical condition holds: For fixed $t_0$, 
$$\int_{t_0}^t \gamma_t e^{-\int_{\tau}^t\gamma_s \ds} e^{-\alpha(\tau-t_0)}d\tau \to 0 \quad \mbox{ as } \quad t\to\infty$$
where $\alpha>0$.
\end{lemma}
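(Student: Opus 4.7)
\medskip
\noindent \textbf{Proof proposal for Lemma \ref{lemma:gamma-condition}.}
My plan is to split the integration interval $[t_0,t]$ into a \emph{far} region $[t_0,t/2]$ and a \emph{near} region $[t/2,t]$ (with respect to the upper limit $t$), and show that both pieces vanish as $t\to\infty$, for complementary reasons. The heuristic is that the kernel $e^{-\int_\tau^t\gamma_s\ds}$ acts like a sharply-peaked approximate identity concentrated near $\tau=t$: far from $t$ the kernel is exponentially small in $t$; near $t$ the competing factor $e^{-\alpha(\tau-t_0)}$ is exponentially small in $t$.

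\medskip
To set up quantitative estimates, I would first use Assumption \ref{ass-gamma-rate} to fix constants $c_1,c_2>0$ with $c_1 s^r \le \gamma_s \le c_2 s^r$ for all $s$ sufficiently large, which yields (for $t$ large)
\begin{equation}
\int_\tau^t \gamma_s \ds \;\ge\; \tfrac{c_1}{r+1}\bigl(t^{r+1}-\tau^{r+1}\bigr),
\qquad t_0\le \tau\le t.
\end{equation}
On the far region $\tau\in[t_0,t/2]$, this gives $\int_\tau^t\gamma_s\ds \ge c_3\, t^{r+1}$ for some $c_3>0$, so the integrand is dominated by $c_2 t^r e^{-c_3 t^{r+1}}$, and integrating over an interval of length at most $t/2$ yields a bound of order $t^{r+1}e^{-c_3 t^{r+1}}\to 0$.

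\medskip
On the near region $\tau\in[t/2,t]$, the factor $e^{-\alpha(\tau-t_0)}$ satisfies $e^{-\alpha(\tau-t_0)}\le e^{-\alpha(t/2-t_0)}$, so it can be pulled out. What remains is the integral $\int_{t/2}^t \gamma_t\, e^{-\int_\tau^t\gamma_s\ds}\,d\tau$. The natural move is the substitution $u=\int_\tau^t\gamma_s\ds$, $du=-\gamma_\tau\, d\tau$, which rewrites this as $\int_0^{U}(\gamma_t/\gamma_\tau)\,e^{-u}\,du$ for some $U>0$. Since $\gamma_t/\gamma_\tau = \Theta((t/\tau)^r) \le (c_2/c_1)\cdot 2^r$ uniformly on $[t/2,t]$, the remaining integral is bounded by a constant independent of $t$. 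Combining with the $e^{-\alpha(t/2-t_0)}$ factor produces a bound that vanishes as $t\to\infty$.

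\medskip
I do not foresee a serious obstacle: the only mild subtlety is keeping the constants hidden in $\gamma_t=\Theta(t^r)$ straight when passing between $\gamma_t$ and $\gamma_\tau$ and when bounding $\int_\tau^t\gamma_s\ds$ from below. The split at $t/2$ is chosen precisely so that neither bound is degenerate; any fractional split $t/K$ with $K>1$ would work equally well. Summing the two bounds gives the claim.
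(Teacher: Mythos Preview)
Your argument is correct. Both you and the paper split the integral into a ``near'' piece and a ``far'' piece (relative to the upper limit $t$), but the executions differ. The paper splits at $t-\kappa$ for a \emph{fixed} $\kappa$ chosen so that $\int_\tau^t\gamma_s\,ds\ge t-\tau$ whenever $t-\tau>\kappa$; on the far piece it then replaces the kernel by $e^{-(t-\tau)}$ and computes the resulting convolution explicitly, while on the near piece of fixed width $\kappa$ it simply bounds the kernel by $1$, giving an overall $O(t^r e^{-\alpha t})$ estimate. You instead split proportionally at $t/2$: on the far piece you use the much stronger bound $\int_\tau^t\gamma_s\,ds\ge c_3 t^{r+1}$ to get super-exponential decay, and on the near piece you pull out $e^{-\alpha(t/2-t_0)}$ and handle the remaining $\int_{t/2}^t\gamma_t\,e^{-\int_\tau^t\gamma_s\,ds}\,d\tau$ by the substitution $u=\int_\tau^t\gamma_s\,ds$ together with the ratio bound $\gamma_t/\gamma_\tau\le (c_2/c_1)2^r$. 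Your route avoids the paper's somewhat informal claim about the existence of $\kappa$ and is arguably cleaner; the paper's route is a bit more elementary in that it never needs the ratio bound or the substitution. Either way the same two mechanisms---kernel decay far from $t$, decay of $e^{-\alpha(\tau-t_0)}$ near $t$---drive the result.
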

\begin{proof}
Given the assumed form for $\gamma_t$, 
there will exist a $\kappa>0$ so that for any $t>\tau>t_0$ satisfying $t-\tau > \kappa$ (or, equivalently, $\tau < t-\kappa$) we have that $e^{- (t-\tau)} \geq e^{-\int_\tau^t \gamma_s \,ds}$. We may then estimate
\begin{align*}
\gamma_t \int_{t_0}^t e^{-\int_\tau^t \gamma_s \,ds} e^{-\alpha(\tau - t_0)} \,d\tau \leq & \gamma_t \int_{t_0}^{t-\kappa} e^{- (t-\tau) - \alpha(\tau - t_0)}\,d\tau + \gamma_t \int_{t-\kappa}^t e^{-\alpha(\tau - t_0)} \,d\tau \\
\leq & \gamma_t \int_{t_0}^{t} e^{- (t-\tau) - \alpha(\tau - t_0)}\,d\tau + \gamma_t \int_{t-\kappa}^t e^{-\alpha(\tau - t_0)} \,d\tau \\
 \leq & Ct^r e^{-\alpha t} \to 0 \mbox{ as } t\to\infty,
\end{align*}
for some constant $C>0$, depending on $t_0$. 
\end{proof}

\begin{lemma} \label{lemma:D_t-Phi-to-0}
Suppose Assumptions \ref{a:Q-PSD1} and \ref{a:eigvec-continuity-h}--\ref{ass-gamma-rate}
hold and that $0$ is a regular saddle point of $h\vert_\C$. Then
$D_t \Phi(0,t)\to 0$ and $D_{xt} \Phi(0,t) \to 0$ as $t\to\infty$.
\end{lemma}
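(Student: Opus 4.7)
The plan reduces the lemma to showing $\partial_t \psi(0, 0, t) \to 0$ and $\partial_t D_{(x_s, x_{nc})}\psi(0, 0, t) \to 0$ as $t \to \infty$. This reduction is immediate: \eqref{eq:phi-def} gives $D_t \Phi(0, t) = (-\partial_t \psi(0, 0, t), 0, 0)^\T$, and using \eqref{eqn:Dx-Phi} together with the equality of mixed partials (valid because $\psi$ is $C^2$ uniformly in $t$ by Lemma~\ref{lemma:manifold-C2}), the only nonzero block of $D_{xt}\Phi(0, t)$ is $-\partial_t D_{(x_s, x_{nc})}\psi(0, 0, t)$. For the remainder of the analysis I work in the $z$-coordinates of Section~\ref{sec:stable-man-key-defs}, combining $(x_s, x_{nc})$ into a single stable block $z^s$ and writing $z = (z^u, z^s)$ so that locally $\calS = \{z^u = \psi(t, z^s)\}$.

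The key tool is the identity obtained from invariance of $\calS$ under the flow of \eqref{eq:H-vec-field-def}. For any solution $\vz(s)$ with $\vz(t) \in \calS$, the relation $\vz^u(s) = \psi(s, \vz^s(s))$ holds for $s \geq t$; differentiating at $s = t$ and using $\dot \vz = H(\vz, s)$ yields the pointwise algebraic identity
\begin{equation}\label{eqn:psi-identity}
\partial_t \psi(t, z^s) = H^u\bigl((\psi(t, z^s), z^s), t\bigr) - D_{z^s}\psi(t, z^s) \, H^s\bigl((\psi(t, z^s), z^s), t\bigr).
\end{equation}
Setting $z^s = 0$ and $z_0(t) := (\psi(t, 0), 0)$, the right-hand side depends only on quantities I can control. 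From Lemma~\ref{lemma:Phi-limit} (evaluated at $x_u = x_s = 0$, using $\psi^\star(0) = 0$), $\psi(t, 0) \to 0$, hence $z_0(t) \to 0$.

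I next bound $H(z_0(t), t)$ using the explicit formula $H(z, t) = \Lambda(t) z + U(t) F(U(t)^\T z, t) - U(t) g'(\gamma_t)\dot\gamma_t + \dot U(t) U(t)^\T z$. Since $\Lambda$ is block-diagonal and the stable block of $z_0$ vanishes, the $\Lambda z_0$ contribution reduces to $(\Lambda^u(t)\psi(t, 0), 0)^\T \to 0$, using that $\Lambda^u(t)$ is uniformly bounded (its entries correspond to the bounded positive eigenvalues of $-\Hess h\vert_\calC(0)$). The term $F$ is $O(|z_0|^2)$ because $F(0, t) = 0 = D_y F(0, t)$ by the construction in \eqref{eq:F-def}. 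Implicitly differentiating $\nabla h(g(\gamma)) + \gamma Q g(\gamma) = 0$ yields $g'(\gamma) = -(\Hess h(g(\gamma)) + \gamma Q)^{-1} Q g(\gamma)$; combining with the asymptotic $g(\gamma) = O(1/\gamma)$ (which follows from $\nabla h(0) \in \calC^\perp$ at any critical point of $h\vert_\calC$) gives $g'(\gamma) = O(\gamma^{-2})$, so under Assumption~\ref{ass-gamma-rate} we obtain $g'(\gamma_t)\dot\gamma_t = O(t^{-r-1}) \to 0$. Standard perturbation theory for $A(t)$ likewise yields $\|\dot U(t)\| \to 0$. Combined with the uniform bound on $D_{z^s}\psi$ (Lemma~\ref{lemma:manifold-C2}), \eqref{eqn:psi-identity} gives $\partial_t \psi(t, 0) \to 0$.

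For the mixed partial, I differentiate \eqref{eqn:psi-identity} in $z^s$ and evaluate at $z^s = 0$. The chain rule produces contributions of the form $D_{z^s}H^u\vert_{z_0}$, $D_{z^u}H^u\vert_{z_0} \cdot D_{z^s}\psi(t, 0)$, $D^2_{z^s}\psi(t, 0) \cdot H^s(z_0(t), t)$, and cross terms involving $D_{z^s}H^s$ and $D_{z^u}H^s$. Block-diagonality of $\Lambda$ kills the leading contribution to $D_{z^s}H^u$, leaving perturbations of size $O(|z_0|) + O(\|\dot U\|) \to 0$; the factor $H^s(z_0(t), t)$ tends to zero by the same analysis as above; and $D^2_{z^s}\psi(t, 0)$ is uniformly bounded (Lemma~\ref{lemma:manifold-C2}). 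The main obstacle is controlling the term $D_{z^s}\psi(t, 0) \cdot D_{z^s}H^s\vert_{z_0}$: since $\Lambda^s(t)$ has entries of order $-\gamma_t$ from the penalized off-constraint directions, a crude bound does not suffice. This is resolved by refining Lemma~\ref{lemma:Phi-limit} to a $C^1$ statement---differentiating the defining integral equation \eqref{eq:integral-eq0} in $a^s$, applying the exponential dichotomy bounds \eqref{eq:ex-estimate-s}--\eqref{eq:ex-estimate-u}, and invoking Lemma~\ref{lemma:gamma-condition} to dominate the perturbation from $g'(\gamma_\tau)\dot\gamma_\tau$---yielding decay of $D_{z^s}\psi(t, 0)$ in the off-constraint directions at rate $o(\gamma_t^{-1})$, which suffices to close the estimate.
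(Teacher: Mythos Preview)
Your approach is genuinely different from the paper's. The paper works directly with the integral representation \eqref{eq:integral-eq0}: it differentiates $\vu(t,0)$ in $t$, obtaining boundary terms plus $\int_{t_0}^t \Lambda(t)V^s(t,\tau)(\cdots)\,d\tau$, and the unbounded stable block of $\Lambda(t)$ is absorbed by Lemma~\ref{lemma:gamma-condition}. You instead use the invariance identity \eqref{eqn:psi-identity}, which is a clean dynamical characterization and gives the $D_t\Phi$ claim without touching the integral equation at all (modulo your appeal to $\dot U(t)\to 0$, which is plausible under Assumption~\ref{a:eigvec-continuity-h} but not argued). The invariance-identity route is arguably more transparent for the first claim.

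For $D_{xt}\Phi$, however, your route loses its advantage. Differentiating \eqref{eqn:psi-identity} produces the term $D_{z^s}\psi(t,0)\cdot D_{z^s}H^s|_{z_0}$, and since the off-constraint block of $\Lambda^s(t)$ diverges like $-\gamma_t$, you need the off-constraint columns of $D_{z^s}\psi(t,0)$ to be $o(\gamma_t^{-1})$. Your sketch of this---differentiate \eqref{eq:integral-eq0} in $a^s$ and use the dichotomy bounds together with Lemma~\ref{lemma:gamma-condition}---is correct in outline and does yield the required rate, but it means you have effectively returned to the paper's integral-equation analysis; the geometric shortcut only postponed the computation. Note also that you list the contribution $D_{z^u}H^u|_{z_0}\cdot D_{z^s}\psi(t,0)$ but never explicitly dispatch it: since $D_{z^u}H^u\approx\Lambda^u(t)$ is bounded away from zero, this term vanishes only if $D_{z^s}\psi(t,0)\to 0$ in \emph{all} stable directions, which is exactly the $C^1$ refinement of Lemma~\ref{lemma:Phi-limit} you invoke at the end---so it is covered, but the dependence should be made explicit. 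In short: your argument is sound, but for the mixed derivative it is no more economical than the paper's direct computation (which, to be fair, the paper itself dispatches with a one-line ``completely analogous'').
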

\begin{proof}
Recalling that $\Phi$ is defined in \eqref{eq:phi-def}, the result is equivalent to $\frac{\partial}{\partial t} \psi(t,0) \to 0$ and $\frac{\partial^2}{\partial t \partial x} \psi(t,0) \to 0$, where 
$\psi$ is defined componentwise in \eqref{eq:psi-S-def} and where $\vu(t,a^s)$ denotes a solution to \eqref{eq:integral-eq0} with stable initialization $a^s$.

Thus, the claim holds if $\ddt \vu(t,0) \to 0$ and $\frac{\partial^2}{\partial x \partial t} \vu(t,0) \to 0$, where $\vu$ satisfies
\begin{align} \label{eq:lemma4-eq1}
\vu(t,a^s) = & V^s(t,t_0)
\begin{pmatrix}
a^s\\
0
\end{pmatrix}\\
& + \int_{t_0}^t V^s(t,\tau)\left(\tilde F(\vu(\tau,a^s),\tau) -U(\tau)g'(\gamma_\tau)\dot\gamma_\tau \right)\,d\tau\\
& - \int_{t}^\infty V^u(t,\tau)\left( \tilde F(\vu(\tau,a^s),\tau) -U(\tau)g'(\gamma_\tau)\dot\gamma_\tau  \right)\,d\tau,
\end{align}
We begin by estimating $\ddt \vu(t,0) \to 0$. Note that the first term on the RHS of \eqref{eq:lemma4-eq1} is zero (as $a_s = 0$) and so the time derivative of this term is also zero. For the second term on the RHS of \eqref{eq:lemma4-eq1}, taking a derivative in $t$ we obtain that this term is equal to
\begin{equation}\label{eq:lemma4-eq2}
\tilde F(\vu(t,0),t)-U(t)g'(\gamma_t)\dot \gamma_t + \int_{t_0}^t \Lambda(t)V^s(t,\tau) \left( \tilde F(\vu(\tau,a^s),\tau) -U(\tau)g'(\gamma_\tau)\dot\gamma_\tau  \right)\,d\tau.
\end{equation}
The first term above clearly goes to zero as $\vu(0,t) \to 0$ (by \eqref{eq:u-bound}) and $\tilde F$ satisfies \eqref{eq:Lip-F}, with $\tilde F(0,t) = 0$.

The second term in \eqref{eq:lemma4-eq2} goes to zero as $t\to\infty$ by the fact that $U(t)$ converges to a limit (by Assumption \ref{a:eigvec-continuity-h}) and $g'(\gamma_t)\dot\gamma_t \to 0$ by Lemma \ref{lemma:g-existence}.

Note that for $\lambda_i(t)$ in the stable block of $\Lambda(t)$ (see \eqref{eq:Lambda-decomposition}), either $\lambda_i(t)$ converges to a limit (by Lemma \ref{lemma:eigvalue-convergence}) or $\lambda_i(t)\to-\infty$ at rate $\lambda_i(t) = \Theta(\gamma_t)$. By \eqref{eq:Lip-F}, \eqref{eq:u-bound},  and Lemma \ref{lemma:g-existence} we see that $\|\tilde F(\vu(\tau,a^s),\tau) - U(\tau) g'(\gamma_\tau)\dot \gamma_\tau\|$ remains bounded for $\tau \geq t_0$. 
Thus, by Lemma \ref{lemma:gamma-condition} the third term in \eqref{eq:lemma4-eq2} also goes to zero. 

The third term on the RHS of \eqref{eq:lemma4-eq1} is bounded similarly, with the simplification that elements in the unstable block of $\Lambda(t)$ are actually bounded (i.e., they converge to a finite limit by Lemma \ref{lemma:eigvalue-convergence}).
The $D_{xt}$ terms are handled in a completely analogous way.
\end{proof}

Let
\begin{equation} \label{eq:T-t-def}
\linG_t \coloneqq D_x G(0,t),
\end{equation}
where $G$ is given in Equation \eqref{eq:ODE-straightened}.

The next lemma considers basic properties of ``approximate'' eigenvalues and eigenvectors of a matrix that will be used to characterize the eigenstructure of $\linG_t$. 
\begin{lemma}\label{approximate-eig-est}
Suppose that $A$ is a symmetric, $m \times m$ matrix and for some $\|x\| = 1$ we have that $(A-\lambda I)x = y$ and  $\|y\|_2 = \epsilon$. Then $\dist(\lambda,\sigma(A)) \leq \e\sqrt{m}$.
Suppose, moreover, that there is a set of $k$ orthogonal vectors $\{x^i\}$ satisfying $(A-\lambda I)x^i = y_i$ with $\|y_i\| \leq \epsilon$. Then $A$ has at least $k$ mutually orthogonal eigenvectors whose eigenvalues satisfy $|\lambda-\lambda_i| \leq C \epsilon$, where $C>0$ depends on $k$ and $m$.
\end{lemma}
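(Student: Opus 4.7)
The plan is to exploit the spectral decomposition of the symmetric matrix $A$: write $A = \sum_{j=1}^m \mu_j v_j v_j^\T$ with orthonormal eigenbasis $\{v_j\}$ and eigenvalues $\{\mu_j\}$. Both parts of the lemma reduce to Parseval-type calculations in this basis, together with a dimension-counting argument.

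For the first claim, I would expand $x = \sum_j c_j v_j$ so that $\sum_j c_j^2 = 1$ and $y = (A - \lambda I) x = \sum_j c_j(\mu_j - \lambda) v_j$. Then
\begin{equation}
\e^2 = \|y\|^2 = \sum_{j=1}^m c_j^2 (\mu_j - \lambda)^2 \;\geq\; \dist(\lambda,\sigma(A))^2 \sum_{j=1}^m c_j^2 = \dist(\lambda,\sigma(A))^2,
\end{equation}
which gives $\dist(\lambda, \sigma(A)) \leq \e \leq \e \sqrt{m}$, yielding the stated bound.

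For the second claim, assume without loss of generality that the $x^i$ are orthonormal. Fix a threshold $\delta > \e \sqrt{k}$ and let $P$ be the orthogonal projection onto $E_\delta \coloneqq \myspan\{v_j: |\mu_j - \lambda| \leq \delta\}$. Expanding $x^i = \sum_j c_j^i v_j$ as before, $\|y_i\|^2 = \sum_j (c_j^i)^2 (\mu_j - \lambda)^2 \leq \e^2$, and restricting the sum to indices with $|\mu_j - \lambda| > \delta$ yields $\|x^i - P x^i\|^2 \leq \e^2/\delta^2$ for each $i$. The key step is now to show $\dim E_\delta \geq k$. If not, there is a unit vector $u \in \myspan\{x^1,\dots,x^k\}$ with $Pu = 0$; writing $u = \sum_i \alpha_i x^i$ with $\sum \alpha_i^2 = 1$, we get
\begin{equation}
1 = \|u\|^2 = \|u - Pu\|^2 = \Big\|\sum_i \alpha_i (x^i - Px^i)\Big\|^2 \leq k \cdot \frac{\e^2}{\delta^2} < 1,
\end{equation}
by Cauchy--Schwarz, a contradiction. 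Hence at least $k$ of the orthonormal eigenvectors $v_j$ lie in $E_\delta$, giving $k$ mutually orthogonal eigenvectors with eigenvalues $\lambda_i$ satisfying $|\lambda_i - \lambda| \leq \delta$. Letting $\delta \downarrow \e \sqrt{k}$ produces the desired constant $C$ (one can take $C = \sqrt{k}$, which is in particular bounded by a constant depending on $k$ and $m$).

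The arguments are essentially standard (a variant of the Bauer--Fike theorem for symmetric matrices), and I do not anticipate a significant obstacle. The only subtlety is the dimension-counting step, which must be phrased carefully: it is tempting to simply apply the first part to each $x^i$ independently, but that would only produce (possibly coincident) eigenvalues near $\lambda$, not $k$ mutually orthogonal eigenvectors. Using the projection onto $E_\delta$ and showing the $Px^i$ span a $k$-dimensional subspace is what delivers the required multiplicity.
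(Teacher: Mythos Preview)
Your proof is correct and follows the same overall strategy as the paper's: diagonalize $A$ in an orthonormal eigenbasis and, for the second part, project onto the eigenspace associated with eigenvalues near $\lambda$. The execution differs in two minor ways worth noting. For the first claim, the paper uses a pigeonhole argument (some coordinate $|v_i\cdot x|\geq 1/\sqrt{m}$, whence $|\lambda_i-\lambda|\leq \e\sqrt{m}$), whereas your Parseval identity gives the sharper bound $\dist(\lambda,\sigma(A))\leq \e$ directly and then relaxes to $\e\sqrt{m}$. For the second claim, the paper bounds the inner products $|P x^i\cdot P x^j|$ to conclude that the projected vectors are linearly independent once the spectral window is wide enough, while you reach the same conclusion by a rank--nullity contradiction showing $\dim E_\delta\geq k$; these are equivalent bookkeeping and yield comparable constants.
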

\begin{proof}
Let $\lambda_i\in \R$ and $v_i\in \R^m$ be the eigenvalue/eigenvector pairs of $A$ satisfying $\|v_i\|=1$. Because $A$ is symmetric it possesses orthogonal eigenvectors and we have $x = \sum_{i=1}^m (v_i\cdot x)v_i$. Thus, we may write
\[
(A-\lambda I) x = \sum_{i=1}^m (\lambda_i - \lambda) (v_i\cdot x) v_i = y.
\]
In turn, we have that, for all $i$, $v_i \cdot x = \frac{y \cdot v_i}{\lambda_i - \lambda}$. As $|y \cdot v_i| \leq \epsilon$, and as $|v_i \cdot x|$ must be greater than $1/\sqrt{m}$ for at least one $i$, we then have that there exists an $i$ such that $|\lambda_i - \lambda| \leq \e\sqrt{m}$.

Let $P_{\lambda,\epsilon}$ be the projection onto the eigenspace associated with all the eigenvalues within distance $K\epsilon$ of $\lambda$. Using the same argument as above, on can verify that
$\|(I-P_{\lambda,\e})x^i\|^2 \leq \frac{m}{K}$, and hence $\|P_{\lambda,\epsilon}x^i\| \geq  \sqrt{1-\frac{m}{K}} \geq 1-\frac{m}{2K}$ for each of the $x^i$. As the $x^i$ are orthonormal, we can use this to infer that $\|P_{\lambda,\epsilon}x^i \cdot P_{\lambda,\epsilon}x^j|| \leq \frac{3m}{2K}$ for $i \neq j$. Thus for $K$ large enough, one has that the $P_{\lambda,\epsilon}x^i$ are linearly independent, and hence the projection has rank at least $k$, which completes the proof. 
\end{proof}

The next lemma characterizes the spectral gap of $\linG_t$. This is the key ingredient establishing that \eqref{eq:ODE-straightened} has an exponential dichotomy and that the stable manifold is a linearly unstable set. 
\begin{lemma} (Spectral gap of $\linG_t$) \label{lemma:T-eigvals}
Suppose that $h\in C^2$, Assumptions \ref{a:coercive-h}, \ref{a:Q-PSD1}, \ref{a:eigvec-continuity-h}, and \ref{ass-gamma-rate} hold, and that $0$ is a regular saddle point of $h\vert_\C$.
The following two properties hold:\\
(i) For all $t$ sufficiently large, $\linG_t$ has precisely $M-n_u$ negative eigenvalues and $n_u$ positive eigenvalues.\\
(ii) There exists a $t^*$ such that $\inf\{\lambda\in \sigma(\linG_t): \,\lambda>0, t\geq t^*\}>0$.
\end{lemma}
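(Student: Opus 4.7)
The plan is to split the proof into three ingredients: exploit forward invariance of $\calU = \{y : y_u = 0\}$ under the $G$-flow to give $\linG_t$ a block lower-triangular structure; compute $\linG_t$ via a chain-rule argument showing asymptotically that $\linG_t = \Lambda(t) + \dot U(t) U(t)^T + o(1)$; and apply Lemma~\ref{approximate-eig-est} to transfer the known spectral gap of $\Lambda(t)$ to $\linG_t$.

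For the block structure, forward invariance of $\calU$ under $\dot{\vw} = G(\vw,t)$ requires $G_u(0, w_s, w_{nc}, t) \equiv 0$ for all $w_s, w_{nc}, t$. Differentiating this identity in $w_s$ and $w_{nc}$ at the origin gives $\partial_{w_s} G_u(0,t) = 0$ and $\partial_{w_{nc}} G_u(0,t) = 0$, so $\linG_t$ is block lower triangular with an $n_u \times n_u$ block $A_{uu}(t)$ and an $(M - n_u) \times (M - n_u)$ block $A_{\text{rest}}(t)$ on the diagonal. Consequently, $\sigma(\linG_t) = \sigma(A_{uu}(t)) \cup \sigma(A_{\text{rest}}(t))$, which will yield the exact eigenvalue count of part (i) once I identify the sign of each block's spectrum.

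For the computation of $\linG_t$, let $y_0(t) := \Phi^{-1}(0,t)$; using \eqref{eq:u-bound} together with Lemma~\ref{lemma:g-existence} one checks $\Phi(0,t) \to 0$, hence $y_0(t) \to 0$. Differentiating $G(\vw,t) = D_x\Phi(\Phi^{-1}(\vw,t),t)\,H(\Phi^{-1}(\vw,t),t) + D_t\Phi(\Phi^{-1}(\vw,t),t)$ in $\vw$ at $\vw = 0$, the three resulting terms are controlled by Lemma~\ref{lemma:manifold-C2} (uniform boundedness of $D^2_x \Phi$), the identity $H(0,t) = -U(t)g'(\gamma_t)\dot\gamma_t \to 0$ (from Lemma~\ref{lemma:g-existence}), and Lemma~\ref{lemma:D_t-Phi-to-0} (for $D_{xt}\Phi \to 0$), giving $\linG_t = D_x H(0,t) + o(1)$. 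A direct differentiation of \eqref{eq:H-vec-field-def}, combined with the identity $D_x F(0,t) = 0$ (immediate from \eqref{eq:A-matrix-def}--\eqref{eq:F-def}), then yields $D_x H(0,t) = \Lambda(t) + \dot U(t) U(t)^T$, so $\linG_t = \Lambda(t) + \dot U(t) U(t)^T + o(1)$.

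The spectrum of $\Lambda(t)$ is well understood: $\Lambda^u(t)$ has $n_u$ positive eigenvalues converging to the positive eigenvalues of $-\Hess h\vert_\calC(0)$ by Lemma~\ref{lemma:eigvalue-convergence}, while $\Lambda^s(t)$ has $M - n_u$ negative eigenvalues (some converging to fixed negative limits, the off-constraint ones diverging to $-\infty$ at rate $\Theta(\gamma_t)$), so $\Lambda(t)$ has a uniform spectral gap for $t$ large. The main obstacle is controlling the perturbation $\dot U(t) U(t)^T$: Assumption~\ref{a:eigvec-continuity-h} only provides continuity, not differentiability, of the eigenvectors. The resolution is that for $t$ large the eigenvalues of $A(t)$ fall into well-separated clusters---off-constraint eigenvalues of order $-\gamma_t$ versus in-constraint eigenvalues converging to distinct finite limits---so $U(t)$ can be chosen smoothly with $\dot U(t) \to 0$, making the perturbation $o(1)$. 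Lemma~\ref{approximate-eig-est}, applied with the standard basis vectors as approximate eigenvectors and the diagonal entries of $\Lambda(t)$ as approximate eigenvalues, then shows that $\sigma(\linG_t)$ tracks $\sigma(\Lambda(t))$ within a vanishing error, preserving the spectral gap; combined with the block triangular structure, this yields parts (i) and (ii).
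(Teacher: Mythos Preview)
Your approach is broadly correct and parallels the paper's strategy—both reduce to showing that $\linG_t$ is an $o(1)$ perturbation of a matrix with known spectral gap and then invoke Lemma~\ref{approximate-eig-est}—but the execution differs in two respects. First, you work in the diagonalized $z$-coordinates and arrive at $\linG_t = \Lambda(t) + \dot U(t)U(t)^\T + o(1)$, using standard basis vectors as approximate eigenvectors; the paper instead writes $\linG_t = -\Hess h(0) - \gamma_t Q + o(1)$ in the original coordinates and constructs approximate eigenvectors explicitly from the constraint/non-constraint block structure, namely $(0,v)$ for eigenvectors $v$ of $Q_{nc}$ and $(v,\gamma_t^{-1}Q_{nc}^{-1}A_3 v)$ for eigenvectors $v$ of $A_1 = -\Hess h\vert_\C(0)$. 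The paper's explicit correction $\gamma_t^{-1}Q_{nc}^{-1}A_3 v$ is the key device that makes the approximate eigenvalue error vanish without ever tracking $\dot U$. Second, your block-lower-triangular argument from forward invariance of $\calU$ is correct, but the paper does not use it here; that structural observation appears only in the proof of the next lemma (Lemma~\ref{lemma:T-block-diag}).

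The one genuinely soft spot in your argument is the claim that eigenvalue cluster separation allows $U(t)$ to be chosen with $\dot U(t)\to 0$. Assumption~\ref{a:eigvec-continuity-h} gives continuity only, and while standard perturbation theory does control $\dot U$ in terms of $\dot A(t)$ and the eigenvalue gaps, you would need to actually carry this out: note that $\dot A(t)$ contains a term $-\dot\gamma_t Q$ which is \emph{not} $o(1)$, so the argument must exploit that this large term acts only on the off-constraint block where the gap is $\Theta(\gamma_t)$, giving $\|\dot U\|$ of order $\dot\gamma_t/\gamma_t = O(t^{-1})$ there, while on the constraint block $\dot A$ is $O(|g'(\gamma_t)\dot\gamma_t|)\to 0$. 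This can be made to work but is not immediate from ``well-separated clusters'' alone. The paper's explicit approximate-eigenvector construction is cleaner precisely because it bypasses any smoothness requirement on $U(t)$.
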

\begin{proof}
By Equation \eqref{eqn:Dx-Phi} and  Lemma 25 in \citep{SMKP-TAC2020} we have $D_x \Phi(0,t) = I$. By Lemma \ref{lemma:Phi-limit} we have  $\Phi^{-1}(0,t) \to 0$ as $t\to\infty$.
By Lemma \ref{lemma:D_t-Phi-to-0} we have $D_t \Phi(0,t)\to 0$ as $t\to\infty$.

From \eqref{eq:ODE-straightened} and \eqref{eq:T-t-def} we see that
\begin{align}
    \linG_t &=  D_x\left[ D_x[\Phi, (\Phi^{-1}(0,t),t)] H(\Phi^{-1}(0,t),t) - D_t[\Phi,(\Phi^{-1}(0,t),t)], (\Phi^{-1}(0,t),t)\right]\\ 
    &= D_x^2[\Phi, (\Phi^{-1}(0,t),t)]( H(\Phi^{-1}(0,t),t),\cdot) + D_x \Phi(0,t) D_x H(\Phi^{-1}(0,t),t) D_x \Phi^{-1}(0,t)\\ 
    & \quad + D_{xt}\Phi(\Phi^{-1}(0,t),t) D_x \Phi^{-1}(0,t).
\end{align}

By Lemmas \ref{lemma:Phi-limit} and \ref{lemma:D_t-Phi-to-0} we obtain that
\[
 \linG_t =   D_x H(\Phi^{-1}(0,t),t) + o(1) = -D_x^2 h(0) - \gamma_t Q + o(1).
\]

Let
\[
 -D_x^2 h(0) - \gamma_t Q = \begin{pmatrix} A_1 & A_2 \\ A_3 & A_4 \end{pmatrix} - \gamma_t \begin{pmatrix} 0 & 0 \\ 0 & Q_{nc}, \end{pmatrix},
\]
where $Q$ has the above structure by our assumption on the ordering of coordinates (and the fact that $\C$ is the nullspace of $Q$) and where we have ordered coordinates so that the first column represents the action in on-constraint directions and the second column represents the action in off-constraint directions. In particular, we have $A_1\in \R^{\dim\C\times \dim\C}$ and $A_4\in \R^{(M-\dim\C)\times(M-\dim\C)}$. 

We remark that $A_1$ is precisely the Hessian of $h|_\C(0)$.
Let $e = (0,v)$ satisfy $|e| =1$ and $Q_{nc} v = \lambda v$. 
Then $(\linG_t + \gamma_t\lambda I)e = (A_2v,A_4v) + o(1)$. Dividing these matrices by $\gamma_t$ and applying the previous lemma gives that, for large $t$, $\linG_t$ has at least $\textup{rank}(Q)$ linearly independent eigenvectors with eigenvalue given by $-\gamma_t\lambda + O(1)$, where $\lambda$ is an eigenvalue of $Q_{nc}$.

Similarly, if we let $v$ be a unit length eigenvector of $A_1$ with eigenvalue $\lambda$, then we have
\[
(\linG_t -\lambda I) \begin{pmatrix} v \\ \gamma_t^{-1}Q_{nc}^{-1}A_3v \end{pmatrix} = O(1).
\]
Applying Lemma \ref{approximate-eig-est} to these approximate eigenvectors, and using the fact that $A_1$ has a spectral gap (since we assume that $0$ is a regular saddle point of $h\vert_\C$)
then completes the proof.
\end{proof}

Having established that $\linG_t$ has a spectral gap, the next lemma shows that, under our choice of coordinates, $\linG_t$ has a convenient structure. 
\begin{lemma} \label{lemma:T-block-diag}
Suppose that $h\in C^2$, Assumptions \ref{a:coercive-h}, \ref{a:Q-PSD1}, \ref{a:eigvec-continuity-h}, and \ref{ass-gamma-rate} hold and that $0$ is a regular saddle point of $h\vert_\C$. Then for all $t$ sufficiently large, $\linG_t$ has the block diagonal form
$$
\linG_t =
\begin{pmatrix}
P_t & 0\\
0 & Q_t
\end{pmatrix},
$$
where $P_t$ is positive definite and $Q_t$ is negative definite.
\end{lemma}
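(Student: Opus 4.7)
The proof combines two ingredients: the forward invariance of the straightened stable manifold $\calU = \{w : w_u = 0\}$ under the rectified flow $\dot w = G(w,t)$, and the uniform spectral gap for $\linG_t$ established in Lemma \ref{lemma:T-eigvals}. Since $\Phi$ maps trajectories of $\dot x = H(x,t)$ that converge to the (perturbed) saddle onto $\calU$, and any time-restriction of such a trajectory still converges, $\calU$ is forward-invariant for $G$: $G_u(0, w_s, t) = 0$ for all $w_s$ in a small neighborhood of $0$ and all large $t$. Differentiating in $w_s$ at $w_s = 0$ kills the upper right block of the linearization, so in the $(w_u, w_s)$ decomposition
\[
\linG_t = \begin{pmatrix} P_t & 0 \\ C_t & Q_t \end{pmatrix},
\]
with $P_t \in \R^{n_u \times n_u}$, $Q_t \in \R^{(M-n_u)\times(M-n_u)}$, and $C_t \in \R^{(M-n_u)\times n_u}$.

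Block triangularity yields $\sigma(\linG_t) = \sigma(P_t) \cup \sigma(Q_t)$. By Lemma \ref{lemma:T-eigvals}, $\linG_t$ has exactly $n_u$ positive and $M - n_u$ negative eigenvalues, all uniformly bounded away from $0$ in $t$. Since $\{w_u = 0\}$ is the $(M-n_u)$-dimensional invariant stable subspace of the linearization and $\linG_t$ restricted there coincides with $Q_t$, the spectrum of $Q_t$ must consist of exactly the negative eigenvalues, while $\sigma(P_t)$ captures the positive ones. Hence $P_t$ has all eigenvalues $\geq c > 0$ and $Q_t$ has all eigenvalues $\leq -c$, uniformly in $t$. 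To convert block triangular to strict block diagonal, I would apply the change of basis $(w_u, w_s) \mapsto (w_u, w_s - K_t w_u)$, where $K_t$ solves the Sylvester equation $Q_t K_t - K_t P_t = -C_t$. Existence and uniqueness follow from $\sigma(P_t) \cap \sigma(Q_t) = \emptyset$, and the representation
\[
K_t = \int_0^\infty e^{Q_t s} C_t e^{-P_t s}\,ds
\]
combined with the uniform spectral gap produces a uniform bound $\|K_t\| = O(1)$. In the new basis $\linG_t$ takes the required form $\mathrm{diag}(P_t, Q_t)$.

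\textbf{Main obstacle.} The principal technical concern is verifying that the Sylvester transformation $K_t$ (and its inverse) stays uniformly bounded as $t \to \infty$, since downstream estimates on the distance function $\eta$ in Proposition \ref{prop:eta-properties} rely on uniform constants. This reduces to two points: the uniform spectral gap of Lemma \ref{lemma:T-eigvals}, which controls the denominator in the Sylvester integral, and boundedness of $C_t$ despite the penalty $-\gamma_t Q$ appearing in $\linG_t \approx -\Hess h(0) - \gamma_t Q + o(1)$. Fortunately, $Q$ has block structure aligned with the constraint/off-constraint split, so the $\gamma_t$-contribution is confined to directions fully absorbed into $Q_t$, leaving $C_t$ bounded by quantities depending only on $\Hess h(0)$; the invariance step is then a purely algebraic consequence of the construction of $\Phi$.
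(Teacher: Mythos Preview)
Your invariance argument correctly yields the block lower-triangular form
\[
\linG_t=\begin{pmatrix}P_t & 0\\ C_t & Q_t\end{pmatrix},
\]
and the identification of the signs of $\sigma(P_t)$ and $\sigma(Q_t)$ via Lemma~\ref{lemma:T-eigvals} is sound. However, the Sylvester step does not prove the lemma as stated: conjugating by $(w_u,w_s)\mapsto(w_u,w_s-K_t w_u)$ produces a block-diagonal matrix in a \emph{new} basis, whereas the lemma asserts block-diagonality of $\linG_t$ in the $(w_u,w_s)$ coordinates already fixed by the construction of $\Phi$. Absorbing $K_t$ into $\Phi$ is not free either, since $K_t$ is time-dependent and would contribute an additional $\dot K_t$ term to the rectified vector field; you would then have to re-verify that the modified $\Phi$ retains the $C^2$ regularity and the derivative bounds used in Lemma~\ref{lemma:manifold-C2} and Proposition~\ref{prop:eta-properties}.

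The paper bypasses all of this. Its argument is that the eigenvectors of $\linG_t$ are orthogonal: since the eigenvectors associated with the $M-n_u$ negative eigenvalues span the invariant subspace $\calU$, orthogonality forces the positive eigenvectors to span $\calU^\perp=\myspan\{e_1,\ldots,e_{n_u}\}$, so the eigenvector matrix $V$ is itself block diagonal and $\linG_t=V\Lambda V^{\T}$ is block diagonal in the original coordinates without any further change of basis. What you gain by your route is that you never invoke symmetry of $\linG_t$ (which the paper uses implicitly through the orthogonality of eigenvectors); what you lose is a direct proof of the statement as written. It is worth noting, though, that the block-triangular form you do establish already delivers the single fact Lemma~\ref{lemma:d-ineq} actually consumes, namely $(\linG_t x)_i=(P_t x_u)_i$ for $i\le n_u$.
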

We remark that the block diagonal structure of $\linG_t$ noted above is a direct consequence of the fact that we are dealing with the vector field for which the stable manifold has been rectified. We now prove the lemma.
\begin{proof}
First, recall that, by construction, $\calU := \{x\in \R^M:x_1=\cdots= x_{n_u} = 0\}$ is invariant under \eqref{eq:ODE-straightened}.
Note that, if an eigenvector of $\linG_t$ has positive eigenvalue, then it must lie in $\calU$.
If this were false, then the space $\calU$ would not be stable under \eqref{eq:ODE-straightened}.
By Lemma \ref{lemma:T-eigvals}, for $t$ sufficiently large, $\linG_t$ has precisely $n_u$ positive eigenvalues and $M-n_u$ negative eigenvalues. 
Let the eigenstructure be arranged so that $\lambda_1,\ldots,\lambda_{n_u}$ are positive and the remaining eigenvalues are negative for $t$ sufficiently large.  (This does not conflict with previous assumptions about ordering or rotation of components.) The corresponding eigenvectors of $v_1,\ldots,v_M$ of $\linG_t$ are divided into two sets so that $\myspan\{v_1,\ldots,v_{n_u}\} = \myspan\{e_1,\ldots,e_{n_u}\} = \calU$ and $\myspan\{v_{n_u+1},\ldots,v_M\} = \myspan\{e_{n_u+1},\ldots,e_M\} = \calU^\perp$.

Letting $V = [v_1,\ldots, v_M]$ be the matrix formed by taking the eigenvectors as columns, by orthogonality $V$ has block diagonal structure
$$
V =
\begin{pmatrix}
V_1 & 0\\
0 & V_2
\end{pmatrix},
$$
and for $\Lambda = \myDiag(\lambda_1,\ldots,\lambda_M)$,  we have
$$
\linG_t =
V\Lambda V^\T  =
\begin{pmatrix}
V_1 & 0\\
0 & V_2
\end{pmatrix}
\begin{pmatrix}
\Lambda_1 & 0\\
0 & \Lambda_2
\end{pmatrix}
\begin{pmatrix}
V_1^\T  & 0\\
0 & V_2^\T
\end{pmatrix}
=
\begin{pmatrix}
V_1\Lambda_1V_1^\T  & 0\\
0 & V_2\Lambda_2V_2^\T
\end{pmatrix}.
$$
\end{proof}

\bigskip
We now define a function that gives a convenient notion of distance to the stable manifold.
Let
\begin{equation} \label{eq:d-def}
d^2(x) \coloneqq \sum_{i=1}^{n_u} x_i^2,
\end{equation}
and
\begin{equation}\label{eta-def}
\eta(x,t) \coloneqq d(\Phi(x,t)).
\end{equation}

The following lemma characterizes the manner in which taking a step in \eqref{eq:ODE1} pushes away from the set $\calU = \{x\in\R^M:x_1=\cdots=x_{n_u}=0\}$ (i.e., roughly, the ``straightened-out'' version of $\calS$).
\begin{remark} (Use of $h\in C^3$ assumption) We remark that the following lemma is the only point in the paper at which Assumption \ref{a:h-C3} ($h\in C^3$) and Lemma \ref{lemma:manifold-C2} are (directly) used. All other uses of Assumption \ref{a:h-C3} in the paper propagate from this lemma.
\end{remark}
\begin{lemma}  \label{lemma:d-ineq}
Suppose Assumptions \ref{a:coercive-h}, \ref{a:Q-PSD1} and \ref{a:eigvec-continuity-h}--\ref{a:h-C3} hold and that $0$ is a regular saddle point of $h\vert_\C$.  Then there exists a constant $c>0$ and a $\delta>0$ such that
$$
d^2(x + \e G(x,t)) \geq (1+\e c)d^2(x)
$$
for all $\e\in[0,1]$, $x\in B_\delta(0)$, and all $t$ sufficiently large, where $G$ is given in Equation \eqref{eq:ODE-straightened}.
\end{lemma}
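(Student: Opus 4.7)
The plan is to exploit the invariance of the subspace $\calU = \{x\in\R^M : x_i = 0 \text{ for } i = 1,\ldots, n_u\}$ under the rectified flow $\dot\vw = G(\vw,t)$ to get a near-linear expansion of $G^u$ about $0$, and then dominate the nonlinear remainder using the uniform spectral gap of $\linG_t$ on the unstable block (Lemmas \ref{lemma:T-eigvals} and \ref{lemma:T-block-diag}).

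Decomposing $x = (x^u, x^s)$ with $x^u \in \R^{n_u}$, one has $d^2(x) = |x^u|^2$ and $d^2(x + \e G(x,t)) = |x^u + \e G^u(x,t)|^2$, where $G^u$ denotes the first $n_u$ components of $G$. The first step is to write a convenient Taylor expansion of $G^u$ in $x^u$. Forward-invariance of $\calU$ implies that $G^u(0, x^s, t) = 0$ for every $x^s$ and every $t$ large enough (differentiating $\vw^u(\cdot)\equiv 0$ along any trajectory initialized in $\calU$). Combined with $C^2$ smoothness of $G$ in $x$, uniform in $t$, this yields
\begin{equation*}
G^u(x,t) = D_{x^u}G^u(0, x^s, t)\, x^u + \rho_1(x,t), \qquad |\rho_1(x,t)| \le C|x^u|^2.
\end{equation*}
Expanding $D_{x^u}G^u(0, x^s, t)$ in $x^s$ about $0$, and noting that by definition $D_{x^u}G^u(0, 0, t) = P_t$ (the upper-left block of $\linG_t$ identified in Lemma \ref{lemma:T-block-diag}), the plan gives
\begin{equation*}
G^u(x,t) = P_t\, x^u + \rho(x,t), \qquad |\rho(x,t)| \le C|x^u|\bigl(|x^u| + |x^s|\bigr).
\end{equation*}

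Next, I would compute $|x^u + \e G^u|^2 = |(I + \e P_t)x^u|^2 + 2\e \langle (I+\e P_t)x^u,\rho\rangle + \e^2|\rho|^2$. By Lemma \ref{lemma:T-eigvals}(ii) and Lemma \ref{lemma:T-block-diag}, $P_t$ is symmetric with all eigenvalues at least some $c_0 > 0$ uniformly for $t$ large, so $|(I+\e P_t)x^u|^2 \ge (1 + 2\e c_0)|x^u|^2$. The approximate-eigenvector analysis in the proof of Lemma \ref{lemma:T-eigvals} (the pairs $v \mapsto (v,\gamma_t^{-1}Q_{nc}^{-1}A_3 v)$, combined with Lemma \ref{approximate-eig-est}) shows that the positive eigenvalues of $\linG_t$ converge to those of $-\Hess h\vert_\C(0)$, so $K := \sup_t \|P_t\| < \infty$. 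Using $|\rho(x,t)| \le 2C\delta |x^u|$ on $B_\delta(0)$, $\|I+\e P_t\| \le 1+K$, and Cauchy--Schwarz on the cross term, these estimates combine to give
\begin{equation*}
d^2(x + \e G(x,t)) \ge \bigl(1 + 2\e c_0 - 4\e C\delta (1 + K)\bigr)|x^u|^2.
\end{equation*}
Choosing $\delta < c_0/\bigl(4C(1+K)\bigr)$ makes the bracketed coefficient at least $1 + \e c_0$, which yields the claim with $c = c_0$.

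The main obstacle is the uniform-in-$t$ $C^2$ regularity of $G$ in $x$ that is needed to justify the quadratic Taylor remainder above. This is precisely where Assumption \ref{a:h-C3} enters: $G$ is built from $H$ via the coordinate change $\Phi(\cdot,t)$, and the chain rule in \eqref{eq:ODE-straightened} requires $\Phi$ (hence the map $\psi$ defining $\calS$) to have $D^2_x$ bounded uniformly in $t$, which is exactly the content of Lemma \ref{lemma:manifold-C2}. Once that smoothness is in hand, together with the facts that $\Phi^{-1}$ is well-defined and uniformly smooth by the inverse function theorem applied to \eqref{eqn:Dx-Phi}, the remainder of the argument is a direct perturbative estimate.
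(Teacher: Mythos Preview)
Your proposal is correct and follows essentially the same approach as the paper: both exploit the invariance of $\calU$ to force $G^u$ to vanish there, Taylor expand to isolate the linear part $P_t x^u$, and then use the block-diagonal structure and spectral gap of $\linG_t$ (Lemmas \ref{lemma:T-eigvals}--\ref{lemma:T-block-diag}) to dominate the remainder for small $\delta$. Your two-step expansion (first in $x^u$, then in $x^s$) together with the explicit invocation of $\sup_t\|P_t\|<\infty$ is a slightly cleaner route to the remainder bound than the paper's single Taylor expansion about $0$, but the substance is identical.
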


\begin{proof}
Recall that $\calU=\{x:x_{1}=\cdots=x_{x_u}=0\}$ is an invariant set for \eqref{eq:ODE-straightened}.
This implies that $G_i(x,t) = 0$ for $x\in \calU, ~i=1,\ldots,n_u.$
In turn, this implies that, for $x\in \calU$,
\begin{equation} \label{eq:G_i_part-zero}
    \frac{\partial G_i(x,t)}{\partial x_j\partial x_k} = 0, \quad \textup{ if } j,k \in \{n_u+1,\ldots,M\}
\end{equation}
(i.e., if $j,k$ are both ``stable'' coordinates). 
By Taylor's theorem, for $i=1,\ldots,n_u$ we have
\begin{equation} \label{eq:lemma1-eq1}
G_i(x,t) = (\linG_t x)_i  
+ R_i(x,t),
\end{equation}
where, given an integer $n$, we use the notation $[n]:=\{1,\ldots,n\}$, and
where $R_i(x,t)$ denotes the remainder term. Noting that $G\in C^2$ since $H \in C^2$ (which follows from Assumption \ref{a:h-C3}), we may express the remainder term as
$$
R_i(x,t) = \sum_{|\alpha| = 2} R_{i,\alpha}(x,t)x^\alpha
$$
where
$$
R_{i,\alpha}(x,t) = \int_0^1(1-s)D^\alpha G_i(sx,t)\ds, 
$$
and where here we use the notation $\alpha = (\alpha_1,\ldots,\alpha_M)$ to denote a multi-index, $|\alpha| = \alpha_1+\ldots+\alpha_M$ denotes the cardinality, $D^\alpha G_i$ denotes $\frac{\partial^2 G_i(x,t)}{\partial x_1^{\alpha_1}\cdots x_M^{\alpha_M}}$ and $x^\alpha = x_1^{\alpha_1}\cdots x_M^{\alpha_M}$ \citep{konigsberger2013analysis}. 
Recalling \eqref{eq:G_i_part-zero} we see that $R_i(x,t)$ takes the form
$$
R_i(x,t) = \sum_{\substack{j\in [n_u]\\ k\in [M]}} c_{i,j}(x,t) x_j x_k.
$$
By Lemma \ref{lemma:manifold-C2} we have that $\|D^2_x G_i(x,t)\| \leq C$ for some $C>0$, for all $t\geq t_0$ and all $x$ in a neighborhood of zero.
Using the integral form for the remainder above, we see that the bounded second derivative for $G$ implies that for all $x$ in a neighborhood of zero and all $t\geq 0$ we have
\begin{equation} \label{eq:R-decay1}
|R_i(x,t)| \leq c\Big|\sum_{\substack{j\in [M]\\ k\in [n_u]}} x_jx_k \Big|.
\end{equation}
for some $c>0$. Note that if we restrict $x\in B_\delta(0)$ then clearly we have the componentwise estimate $|x_j|<\delta$, $j\in[M]$. Implementing this simple estimate in \eqref{eq:R-decay1} we see that 
\begin{equation}\label{eq:R-decay2}
R_i(x,t) \leq \delta c d(x)
\end{equation}
for some $c>0$ and $x\in B_\delta(0)$. 

For $\e\in [0,1]$, we now compute
\begin{align}
d^2(x+ \e G(x,t)) & = \sum_{i=1}^{n_u} (x+\e G(x,t))_i^2\\
& = \sum_{i=1}^{n_u} ((x+\e \linG_t x)_i + \e R_i(x,t))^2\\
& = \sum_{i=1}^{n_u} (x+\e \linG_tx)_i^2 + \underbrace{\sum_{i=1}^{n_u} \left( 2(x+\e \linG_t x)_i\e R_i(x,t)+ \e^2 R_i(x,t)^2\right)}_{\coloneqq R(x,t)},
\label{eq:lemma-d-eq1}
\end{align}
and we define $R(x,t)$ as in the last line above.
Note that $W_t$ is bounded in unstable directions and so $\|W_tx\| \leq d(x)$. 
By \eqref{eq:R-decay2} and the definition of $R(x,t)$
we see that
we may choose a constant $\hat c>0$ and $\delta > 0$ such that 
\begin{equation} \label{eq:R-bound4}
R(x,t) \leq \hat c d^2(x)
\end{equation}
for all $x\in B_\delta(0)$ and $t\geq t_0$. 

We now focus on estimating the first term on the right-hand side (last line) of \eqref{eq:lemma-d-eq1}.
For $t$ sufficiently large, $\linG_t$ has the block diagonal structure indicated in Lemma \ref{lemma:T-block-diag}. Let $P_t$ be the positive definite block.
Let $\lambda^*_t$ denote smallest positive eigenvalue of $\linG_t$ at time $t$ and note that by Lemma \ref{lemma:T-eigvals} there exists a time $t^*$ such that $\lambda := \inf_{t\geq t^*} \lambda^*_t >0$.
Thus we see that
\begin{align}
\sum_{i=1}^{n_u} (x + \e\linG_t x)_i^2 = (x_u+\e P_t x_u)^\T (x_u+\e P_t x_u) = x_u^\T (I+\e P_t)^\T (I+\e P_t)x_u \geq (1+\e \lambda)^2d^2(x),
\end{align}
for all $t$ sufficiently large.
Choose $\hat c\in (0,\e\lambda)$ in \eqref{eq:R-bound4} and a corresponding $\delta>0$. 
Letting $c=\lambda-\frac{\hat c}{\e}$, noting that $(1+\e\lambda)^2\geq (1+\e\lambda)$, and returning to \eqref{eq:lemma-d-eq1} this implies that
$$
d^2(x+ \e G(x,t)) \geq (1+\e c)d^2(x)
$$
for all $x\in B_\delta(0)$ and $t$ sufficiently large.
\end{proof}

The following lemma reviews the properties of $d$ and $\eta$ defined in \eqref{eq:d-def} and \eqref{eta-def}. More to the point, the lemma characterizes the relationship between taking discretized steps of \eqref{dynamics_DT3} and the stable manifold $\calS$, in particular, showing that \eqref{dynamics_DT3} is repelled from $\calS$. The properties demonstrated in this lemma will be used in the following section to prove Theorem \ref{thrm:saddle-instability}.

Before stating the lemma, we recall that we use the following notational convention: If $g$ is $C^1(R^m;R^n)$, we use the notation $D[g,x]$ to denote the derivative of $g$ at the point $x$. Treating $D[g,x]:\R^m\to\R^n$ as a linear operator, we use the notation $D[g,x](y)$ or $D[g,x]\circ y$, $y\in \R^m$ to indicate the action of $D[g,x]$ on $y$. We also recall that $J$ is the vector field defined in \eqref{eq:J-def}.
\begin{proposition} \label{prop:eta-properties}
Suppose Assumptions \ref{a:coercive-h}, \ref{a:Q-PSD1} and \ref{a:eigvec-continuity-h}--\ref{a:h-C3} hold and that $0$ is a regular saddle point of $h\vert_\C$. Assume that $t\mapsto \gamma_t$ is $C^2$. Let $J$ be defined in \eqref{eq:J-def}. Then
$d(\cdot)$ and $\eta(\cdot,\cdot)$ have the following properties.
\begin{enumerate}
  \item $d(cx) = cd(x)$ for all $c>0$
  \item $d(\cdot)$ is convex
  \item $d(\cdot)$ is Lipschitz continuous
  \item There exist constants $c_2,c_3>0$ and a $\delta>0$ such that
$$
\eta(T(x+\e J(x,t),t+\e),t+\e)) \geq (1+c_2\e)\eta(T(x,t),t) - c_3\e^2
$$
for $\e\in[0,1]$, $x\in B_\delta(0)$, and $t$ sufficiently large.
  \item Let $\tilde \eta(x,t) = \eta(T(x,t),t)$. For $\tilde \eta(x,t)\not= 0$, $x$ in a neighborhood of 0 and all $t\geq 0$, we have
      $$
      D[\tilde \eta,(x,t)](J(x,t),1)
       > 0.
      $$
\end{enumerate}
\end{proposition}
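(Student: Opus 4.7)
Properties 1--3 are immediate from the observation that $d(x)=\|(x_1,\ldots,x_{n_u})\|_2$ is a seminorm on $\R^M$: positive homogeneity, convexity, and $1$-Lipschitz continuity all follow directly.

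For properties 4 and 5, the plan is to exploit the fact that the composed map $\Psi(x,t):=\Phi(T(x,t),t)$ conjugates the dynamics. If $\vx$ solves $\dot\vx = J(\vx,t)$, then by the chain-rule calculation leading to \eqref{eq:ODE-straightened}, the curve $\vw(t):=\Psi(\vx(t),t)$ satisfies $\dot\vw = G(\vw,t)$. Since $\tilde\eta(x,t)=\eta(T(x,t),t)=d(\Psi(x,t))$, this reduces the analysis of $\tilde\eta$ along $J$-trajectories to the analysis of $d$ along $G$-trajectories, which is precisely the setting of Lemma \ref{lemma:d-ineq}.

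For property 4, I would Taylor-expand in $\e$. From $\dot\vw = G(\vw,t)$ one has $\Psi(\vx(t+\e),t+\e) = \Psi(x,t)+\e G(\Psi(x,t),t)+O(\e^2)$; combining this with $\vx(t+\e)=x+\e J(x,t)+O(\e^2)$ and the uniform $C^2$ regularity of $\Psi$ in $x$ (Lemma \ref{lemma:manifold-C2}) gives
\[
\Psi(x+\e J(x,t),t+\e) = \Psi(x,t) + \e G(\Psi(x,t),t) + O(\e^2),
\]
uniformly for $x\in B_\delta(0)$ and $t$ large. Applying the $1$-Lipschitz map $d$, then invoking Lemma \ref{lemma:d-ineq} on the leading order, then using the elementary bound $\sqrt{1+\e c}\geq 1+\e c/4$ for $\e\in[0,1]$ and $c$ small, I recover $d(\Psi(x+\e J(x,t),t+\e))\geq (1+c_2\e)\,d(\Psi(x,t)) - c_3\e^2$ with $c_2=c/4$, which is property 4.

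For property 5, I would differentiate Lemma \ref{lemma:d-ineq} at $\e=0^+$. Since $d^2$ is a quadratic form, hence smooth, the one-sided directional derivative is the usual derivative, yielding $\nabla d^2(w)\cdot G(w,t)\geq c\,d^2(w)$. Setting $w=\Psi(x,t)$, when $\tilde\eta(x,t)>0$ I divide by $2d(w)>0$ and apply the chain rule together with $\dot\vw = G(\vw,t)$ to identify the left-hand side with $D[\tilde\eta,(x,t)](J(x,t),1)$. Strict positivity is immediate. The main obstacle throughout is ensuring that the $O(\e^2)$ remainders are \emph{uniform in} $t$; this reduces to uniform-in-time bounds on $D_x^2\Psi$, $D_t\Psi$, and $D_t J$, which follow respectively from Lemma \ref{lemma:manifold-C2}, from Lemma \ref{lemma:D_t-Phi-to-0} combined with the control of $U(t)$ and $g(\gamma_t)$ afforded by Assumption \ref{a:eigvec-continuity-h}, Lemma \ref{lemma:g-existence}, and Assumption \ref{ass-gamma-rate}, and from the assumed $C^2$ regularity of $t\mapsto\gamma_t$. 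Once these uniform estimates are in place, both Taylor expansions close up and properties 4 and 5 follow.
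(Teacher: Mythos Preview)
Your proposal is correct and matches the paper's argument closely: pass to rectified coordinates via $\Psi:=\Phi\circ T$, Taylor expand to first order in $\e$, identify the linear term with $\e\,G(\Psi(x,t),t)$, and then apply Lemma~\ref{lemma:d-ineq} together with the Lipschitz property of $d$. Two cosmetic differences are worth noting. First, the paper Taylor expands $\Phi(T(\cdot,\cdot),\cdot)$ directly at the base point $(x,t)$ with increment $(\e J(x,t),\e)$, rather than routing through the continuous flow $\vx(t+\e)$ as you do; the direct route avoids needing a bound on $\ddot\vx$ and hence on $D_tJ$ (your justification ``$C^2$ regularity of $\gamma_t$'' does not by itself bound $\dot\gamma_t Qx$, so the direct expansion is cleaner here). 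Second, the paper obtains Property~5 simply by sending $\e\to 0$ in Property~4, which immediately gives $D[\tilde\eta,(x,t)](J(x,t),1)\geq c_2\,\tilde\eta(x,t)>0$; your route via differentiating Lemma~\ref{lemma:d-ineq} at $\e=0^+$ also works but is a slight detour. Neither difference is substantive.
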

\begin{remark}[$\calS$ as the repelling object]
In this paper we have discussed stable manifolds for continuous-time systems. Discrete-time systems (with constant step size) also possess stable manifolds \citep{shub2013global}, and these manifold generally differ from their continuous-time counterparts (when the discrete-time processes is obtained by discretization of a continuous-time process). It is important to note that, in each case, the associated stable manifold is precisely the set that the process (be it discrete or continuous-time) is repelled from. Note that in Proposition \ref{prop:eta-properties}, we study the continuous-time stable manifold $\calS$ as a repelling object for a discretization of \eqref{eq:ODE1} with step size $\e$. Because we are using the ``wrong'' stable manifold, these dynamics are not perfectly repelled from $\calS$; this is captured by the error term at the end of property 4 above, indicating that arbitrarily close to $\calS$, the discretization may not step away from $\calS$. However, as $\e\to0$, $\calS$ approximates the stable manifold of the discretized system with higher fidelity, and this error term goes to zero. Since Theorem \ref{thrm:saddle-instability} considers a discretization of \eqref{eq:ODE1} with \emph{decaying} step size (i.e., \eqref{dynamics_DT3}), $\calS$ is asymptotically repelling for these dynamics.
\end{remark}

\begin{proof}
The proof of this proposition is similar to the proof of Proposition 3 in \citep{pemantle1990nonconv}. Properties 1--3 follow readily from the definition of $d$. Property 5 follows from Property 4 by taking $\e\to0$. 
Property 4 is proved as follows. First, note that if $\dot \vx = J(\vx,t)$, so that $\vx$ satisfies \eqref{eq:ODE1}, and $\vy = T(\vx,t)$, then 
$$
\dot \vy = DT \circ
\begin{pmatrix}
J(\vx,t)\\
1
\end{pmatrix} = DT \circ
\begin{pmatrix}
J(T^{-1}(\vy,t),t)\\
1
\end{pmatrix},
$$
where here we use $DT$ as shorthand for $D[T,(\vx,t)]$. Note that, by construction, the right hand side above coincides with the vector field $H$ defined in \eqref{eq:H-vec-field-def}. (This may also be verified directly using \eqref{eq:H-vec-field-def} and \eqref{eq:T-coord-change}.) 

In the computations below we take derivatives of $\Phi$ with respect the base point $(T(x,t),t)$, so $D_x \Phi$ represents $D_x[\Phi,(T(x,t),t)]$ and likewise for $D_t\Phi$. Derivatives of $T$ are taken with respect to the base point $(x,t)$. 
Taking the Taylor expansion of $\Phi\circ T$ we have
\begin{align}
    \Phi(T(x+\e J(x,t),t+\e),t+\e) & = \Phi(T(x,t),t) + \e D_x \Phi\circ DT \circ 
    \begin{pmatrix}
    J(x,t) \\
    1
    \end{pmatrix} + \e D_t\Phi + O(\e^2)\\
    & = \Phi(y,t) + \e D_x \Phi\circ DT \circ 
    \begin{pmatrix}
    J(T^{-1}(y,t),t) \\
    1
    \end{pmatrix} + \e D_t\Phi + O(\e^2)\\
    & = \Phi(y,t) + \e D_x \Phi\circ H(y,t) + \e D_t\Phi + O(\e^2)\\
    & = w + \e D_x \Phi\circ H(\Phi^{-1}(w,t),t) + \e D_t\Phi + O(\e^2)\\
    & = w + G(w,t) + O(\e^2)
\end{align}
where in the first line, the $O(\e^2)$ term follows from Taylor's theorem using the fact that $\Phi$ is $C^2$ (this follows from \eqref{eq:phi-def} and Lemma \ref{lemma:manifold-C2}) and $T$ is $C^2$ (which follows by \eqref{eq:T-coord-change} and the fact that $\gamma_t$ is assumed to be $C^2$), in the second line 
we let $y = T(x,t)$, in the third line we use the form of $H$ given above, in the fourth line we let $w= \Phi(y,t)$, and in the last line we use the definition of $G$ in \eqref{eq:ODE-straightened}. Thus, we see that
\begin{align}
    \eta(T(x+\e J(x,t),t+\e),t+\e)) & = d(\Phi(T(x+\e H(x,t),t+\e),t+\e))\\
    & = d\left( \Phi(T(x,t),t) + G(y,t) + O(\e^2) \right)\\
    & \geq (1+c_2\e)d(\Phi(T(x,t),t)) -c_3\e^2\\
    & = (1+c_2\e)\eta(T(x,t),t) -c_3 \e^2,
\end{align}
for constants $c_2,c_3>0$, where the inequality follows by Lemma \ref{lemma:d-ineq} and the fact that $d$ is Lipschitz.
\end{proof}


\section{Stochastic Analysis} \label{sec:stochastic-analysis-D-SGD}
We now prove Theorem \ref{thrm:saddle-instability}. Our analysis follows a similar approach to \citep[Sec. 4]{pemantle1990nonconv}. 
The strategy of our analysis will rely on the observation that \eqref{dynamics_DT3} is a noisy discretization of the continuous-time process \eqref{eq:ODE1}. As a consequence, we will see that solutions to \eqref{dynamics_DT3} are asymptotically repelled from the stable manifold of \eqref{eq:ODE1}.

To be more precise, suppose that the hypotheses of Theorem \ref{thrm:saddle-instability} hold. Note that the stable manifold constructed in Theorem \ref{thrm:main-continuous} depends not only on $h$ and $Q$, but also on the (continuous-time) weight parameter
$\gamma_t$. In order to construct appropriate continuous-time weight parameters given discrete-time weights $\alpha_k$
and $\gamma_k$,
let $t\mapsto\gamma_t$ be a smooth interpolation of $\gamma_k$ so that $\gamma_t$ and $\gamma_k$ coincide when $t=k$, $k\in \{1,2,\ldots\}$ and $\gamma_t\in C^2$. (Note that if $\gamma_k$ satisfies \ref{a:weights-h}, then $\gamma_t$ can be constructed to satisfy Assumption \ref{ass-gamma-rate}.) 
Per Theorem \ref{thrm:main-continuous}, let $\calS$ be the stable manifold associated with the process \eqref{eq:ODE1} at the given saddle point, given $\gamma_t$. We will see that solutions to \eqref{dynamics_DT3} are repelled from $\calS$, thus constructed.

For $k\geq 1$ let
\begin{equation}
\zeta_k \coloneqq \sum_{j=1}^{k} \alpha_j.
\end{equation}
Informally, \eqref{dynamics_DT3} may be thought of as an Euler approximation of \eqref{eq:ODE1} with (diminishing) step size $\alpha_k$. In this interpretation, $\zeta_k$ represents the time elapsed by iteration $k$. 

Let $d(\cdot)$ and $\eta(\cdot,\cdot)$ be as in \eqref{eq:d-def} and \eqref{eta-def}. With $T$ defined in \eqref{eq:T-coord-change}, let $z(k) = T(x(k),\zeta_k)$, let
\begin{displaymath}
S_k \coloneqq \eta(z(k),\zeta_k),
\end{displaymath}
let $X_k \coloneqq S_k-S_{k-1}$, and let $\calF_k \coloneqq \sigma\left(\{x(j),\xi(j)\}_{j=1}^k \right)$, for $k\geq 1$, where $\sigma(\cdot)$ denotes the $\sigma$-algebra generated by a random variable \citep{durrett2005probability}.
Here, $S_k$ represents the distance of the D-SGD process, $x(k)$, from the stable manifold at iteration $k$, and $X_k$ represents the incremental process.\footnote{$\eta$ takes as inputs points in the alternate coordinate system defined by $T$. Hence, we pass $z(k)$ to $\eta$ rather than $x(k)$.}

To show Theorem \ref{thrm:saddle-instability} it is sufficient to show that $\P(S_k\not\to 0) = 1$.
Intuitively, the proof of Theorem \ref{thrm:saddle-instability} may be broken down into two parts. First, the nondegenerate nature of the noise sequence $\{\xi(k)\}$ (Assumption \ref{a:noise3}) ensures that $S_k$ will eventually wander far from zero (Lemma \ref{lemma:wander-far} below). Second, due to the instability of $\calS$ under \eqref{eq:ODE1},
$S_k$ has positive drift so that, if $S_k$ wanders far from 0, it is unlikely to return (Lemma \ref{lemma:doesnt-return} below).
After proving Lemmas \ref{lemma:wander-far}--\ref{lemma:doesnt-return}, a brief proof of Theorem \ref{thrm:saddle-instability} is given in Section \ref{sec:last-proof}.

\begin{lemma} \label{lemma:wander-far}
Suppose that the hypotheses of Theorem \ref{thrm:saddle-instability} hold.
Then there exists a constant $c_4>0$ such that for all $k$ sufficiently large,
$$
\P\left(\sup_{j\geq k} S_j > c_4 k^{1/2 - \tau_\alpha}\Big\vert \calF_k\right) \geq 1/2.
$$
\end{lemma}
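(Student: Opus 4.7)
The plan is to view $S_j$ as a non-negative process built from a non-negative predictable drift plus a square-integrable martingale whose quadratic variation from time $k$ onward grows like $\sum_{j\geq k}\alpha_j^2\asymp k^{1-2\tau_\alpha}$; the threshold $k^{1/2-\tau_\alpha}$ is precisely the resulting standard deviation. First I would Taylor-expand $\eta(T(x(j+1),\zeta_{j+1}),\zeta_{j+1})$ about $(T(x(j),\zeta_j),\zeta_j)$ using the update \eqref{dynamics_DT3}. The $C^2$ regularity of $\eta$ afforded by Lemma \ref{lemma:manifold-C2} (via \eqref{eta-def} and \eqref{eq:phi-def}) controls the remainder and yields a decomposition
\begin{equation}
X_{j+1} \;=\; S_{j+1}-S_j \;=\; D_{j+1} + N_{j+1} + R_{j+1},
\end{equation}
where $D_{j+1}$ is the $\calF_j$-measurable drift coming from the deterministic vector field $J(x(j),\zeta_j)$, $N_{j+1}$ is a mean-zero noise term of order $\alpha_{j+1}$ involving $D_x\tilde\eta\cdot\xi(j+1)$, and $R_{j+1}=O(\alpha_{j+1}^2(1+\|\xi(j+1)\|^2))$.

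Second, I would extract the two structural estimates that drive the argument. Property 5 of Proposition \ref{prop:eta-properties} gives $D_{j+1}\geq 0$ whenever $S_j>0$ and $x(j)$ lies in a small enough neighborhood of the saddle, reflecting the linear instability of the stable manifold under \eqref{eq:ODE1}. For the noise, Assumption \ref{a:noise3} combined with Jensen's inequality implies $\E[(\xi(j+1)^\T \theta)^2 \,\vert\, \calF_j] \geq (2c_1)^2$ for every unit $\theta\in\calC$; together with the block structure of $\linG_t$ from Lemma \ref{lemma:T-block-diag} (which forces $D_x\tilde\eta$ to have a nontrivial, uniformly lower-bounded projection onto $\calC$ near the saddle), this produces
\begin{equation}
\E[N_{j+1}^2\,\vert\,\calF_j] \;\geq\; c\,\alpha_{j+1}^2,
\end{equation}
for some $c>0$ independent of $j$.

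Third, I would carry out a second-moment argument. Define $\tau_k = \inf\{j\geq k : S_j > c_4 k^{1/2-\tau_\alpha}\text{ or }x(j)\notin B_\delta(0)\}$ and the stopped martingale $M_n = \sum_{i=k+1}^{n\wedge\tau_k} N_i$. On $\{\tau_k=\infty\}$, summing the increment decomposition, using $D_{j+1}\geq 0$, and noting that $\sum_{i>k}|R_i|=O(k^{1-2\tau_\alpha})=o(k^{1/2-\tau_\alpha})$ (valid since $\tau_\alpha>\tfrac12$), would force $|M_\infty|\leq c_4 k^{1/2-\tau_\alpha}+o(k^{1/2-\tau_\alpha})$. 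On the other hand, the variance estimate gives $\E[M_\infty^2\,\vert\,\calF_k]\geq c\sum_{i>k}\alpha_i^2\asymp k^{1-2\tau_\alpha}$, and Assumption \ref{a:noise2} combined with a Burkholder--Davis--Gundy-type fourth-moment bound yields $\E[M_\infty^4\,\vert\,\calF_k]\lesssim k^{2(1-2\tau_\alpha)}$. Paley--Zygmund then produces $\P(|M_\infty|>c'k^{1/2-\tau_\alpha}\,\vert\,\calF_k)\geq \gamma$ for some constant $\gamma>0$; choosing $c_4$ small enough contradicts $\{\tau_k=\infty\}$ with probability at least $\gamma$. The stronger constant $\tfrac12$ is then obtained by partitioning $\{j\geq k\}$ into countably many disjoint blocks of geometrically increasing length and iterating via the strong Markov property, amplifying $\gamma$ to any constant less than one.

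The main obstacle is patching the three ingredients together in a common regime. The drift non-negativity only holds in a neighborhood of the saddle, which is handled by the exit stopping time (an exit event is harmless because it means the process has already left the basin of attraction of $x^*$). The variance lower bound relies on a non-degenerate projection of $D_x\tilde\eta$ onto $\calC$, which rests on the eigenstructure analysis underlying Lemma \ref{lemma:T-block-diag}. Finally, the Paley--Zygmund step requires controlling the fourth moment of $M_\infty$; the main technical verification is that the accumulated quadratic error terms $R_{j+1}$ do not corrupt this bound, which reduces to the uniform $L^2$-bound on $\xi$ together with square-summability of $\alpha_j$.
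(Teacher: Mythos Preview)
Your martingale decomposition is natural, but the argument has a genuine gap at the step where you conclude that on $\{\tau_k=\infty\}$ one has $|M_\infty|\leq c_4 k^{1/2-\tau_\alpha}+o(k^{1/2-\tau_\alpha})$. From $S_j=S_k+\sum D_i+M_j+\sum R_i$ with $D_i\geq 0$ and $0\leq S_j\leq c_4 k^{1/2-\tau_\alpha}$ you only obtain the \emph{one-sided} bound $M_j\leq c_4 k^{1/2-\tau_\alpha}+o(k^{1/2-\tau_\alpha})$; nothing prevents $M_j$ from drifting to $-\infty$ while $S_j$ stays bounded, because the nonnegative drift $\sum D_i$ (which, being of order $\sum\alpha_i S_{i-1}$, need not converge) can exactly compensate. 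Hence the Paley--Zygmund conclusion $\P(|M_\infty|>c'k^{1/2-\tau_\alpha})\geq\gamma$ does not contradict $\{\tau_k=\infty\}$: the event may be realized entirely through large negative values of $M_\infty$. There are two smaller issues as well: the BDG fourth-moment bound you invoke requires $\E(\|\xi(k)\|^4\,|\,\calF_{k-1})<\infty$, which is not among the hypotheses (only a second-moment bound is assumed), and the block-iteration ``amplification'' to $1/2$ is delicate because the threshold $c_4 k^{1/2-\tau_\alpha}$ depends on the starting index and the process is only assumed to have martingale-difference noise, not i.i.d.\ noise.

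The paper avoids all of this by following Pemantle's original second-moment comparison on $S^2$ directly rather than decomposing the increments. One sets $\calT=\inf\{m\geq k:S_m>c_4 k^{1/2-\tau_\alpha}\}$ and proves a \emph{lower} bound $\E(S_{\calT\wedge m}^2\,|\,\calF_k)\geq C\,\P(\calT=\infty\,|\,\calF_k)\,k^{1-2\tau_\alpha}$ (coming from $\E(X_{m+1}^2\,|\,\calF_m)\geq C m^{-2\tau_\alpha}$, which uses Assumption~\ref{a:noise3} via $\E(X_{m+1}^+\,|\,\calF_m)$, together with the drift estimate from Proposition~\ref{prop:eta-properties}) and an \emph{upper} bound $\E(S_{\calT\wedge m}^2\,|\,\calF_k)\leq 2c_4^2 k^{1-2\tau_\alpha}$ (since $S_{(\calT\wedge m)-1}\leq c_4 k^{1/2-\tau_\alpha}$ by the definition of $\calT$ and the last increment is $O(\alpha_k)$). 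Combining the two gives $\P(\calT=\infty\,|\,\calF_k)\leq C'c_4^2$, which is made $<1/2$ by choosing $c_4$ small. The point is that $S$ itself is nonnegative and bounded above on $\{\calT>m\}$, so one never needs to control the martingale part from below; this is exactly the feature your additive decomposition discards.
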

\begin{proof}
The proof is similar to the proof of Lemma 1 in \citep{pemantle1990nonconv}, but adapted to the nonautonomous case. Throughout the proof, without loss of generality, we will assume that $\sup_{x,t}|J(x,t)|< \infty$, where $J$ is the vector field defined in \eqref{eq:J-def}.\footnote{The analysis is easily extended to the case where this property only holds locally near $x^*$ using a simple coupling argument, identical to the end of \citep[Sec. 4]{pemantle1990nonconv}.}

Let
$$\calT := \inf\left\{m \geq k:S_m> c_4k^{1/2-\tau_\alpha}\right\},$$
be a stopping time indicating the first time (after time $k$) that $S_m$ attains the value $c_4k^{1/2-\tau_\alpha}$, where $\tau_\alpha$ is the decay rate of $\alpha_k$ assumed in Assumption \ref{a:weights-h}.
We will prove the result by considering the growth of the second moment of $\E(S_m^2\vert \calF_k)$.
To that end, for $m\geq k$, we begin by estimating the incremental growth
\begin{align}
\E(S^2_{\calT \wedge (m+1)}\vert \calF_k) - \E(S^2_{\calT \wedge m}\vert \calF_k) = & \E(\ones_{\calT > m}(2X_{m+1}S_m + X_{m+1}^2)\vert \calF_k)\\
= & \E(\E(\ones_{\calT > m}2X_{m+1}S_m\vert \calF_m)\vert \calF_k)\\
& \quad + \E(\E(\ones_{\calT > m}X_{m+1}^2\vert\calF_m)\calF_k).
\label{eq:S_n-increment}
\end{align}

We will estimate both of the terms on the right hand side above, beginning with the term $\E(\ones_{\calT > m}2X_{m+1}S_m\vert \calF_m)$. Note that $\ones_{\calT > m}$ and $S_m$ are $\calF_m$-measurable and so may be pulled out of the conditional expectation. 

We will now estimate a lower bound on $\E(X_{m+1}\vert \calF_m)$. Let $\tilde \Phi(x,t) := \Phi(T(x,t),t)$. Observe that
\begin{align} \label{eq:drift-growth}
\E(X_{m+1}\vert \calF_m) = & \E\big(\eta\left(z(m+1),\zeta_{m+1}\right) - \eta\left(x(m),\zeta_m\right)\big\vert \calF_m\big)\\
= & \E\big( d(\tilde \Phi(x(m+1),\zeta_{m+1})\big\vert \calF_m \big) - S_m\\
\geq & d\big(\E( \tilde \Phi(x(m+1),\zeta_{m+1} \big\vert \calF_m)\big) - S_m, \label{eq:drift-growth1}
\end{align}
where the inequality follows from Jensen's inequality and the convexity of $d$. Taking the first-order Taylor approximation of $\tilde \Phi$  and continuing from \eqref{eq:drift-growth1} above we obtain
\begin{align}
\ldots= & d\bigg(\E\Big( \tilde \Phi(x(m),\zeta_{m+1}) + D_x[\tilde \Phi,(x(m),\zeta_{m+1})](x(m+1) - x(m)) \\
& \quad + O(|x(m+1) - x(m)|^2) \Big\vert \calF_m\Big)\bigg) - S_m  \\
= & d\Big( \tilde \Phi(x(m),\zeta_{m+1}) + D_x[\tilde \Phi,(x(m),\zeta_{m+1})] \E\big(x(m+1) - x(m)\Big\vert\calF_m\big)\Big)\\
& \quad + O(\E(\|x(m+1) - x(m)\|^2\,\big\vert \calF_m)) - S_m, 
\label{eq:drift-growth2}
\end{align}
where the second line follows using the fact that $D^2 \Phi$ is uniformly bounded in $t$ (Lemma \ref{lemma:manifold-C2}), so that $D_x\tilde \Phi$ is Locally Lipschitz in $x$ with a constant that holds uniformly across time. 

Using \eqref{dynamics_DT3} and Assumption \ref{a:noise2}, we see that  
$$\E\left(\|x(m+1) - x(m)\|^2\big\vert \calF_m\right) \leq \alpha_m^2 C$$ for some $C>0$. Thus, continuing from \eqref{eq:drift-growth2} above, employing this estimate, the Lipschitz estimate from Proposition \ref{prop:eta-properties}, and the step-size assumption \ref{a:weights-h}, we obtain,
\begin{align}
\ldots= & d\big( \tilde \Phi(x(m),\zeta_{m+1}) + D_x[\tilde \Phi,(x(m),\zeta_{m+1})] \alpha_m J(x(m),\zeta_m)\big) + O(m^{-2\tau_\alpha}) - S_m,
\end{align}
where we recall that $J$ is the right hand side of \eqref{eq:ODE1} (see \eqref{eq:J-def}). Next we ``undo'' the Taylor approximation to obtain
\begin{align}
\ldots = & d\big( \tilde \Phi(x(m) + \alpha_m J(x(m),\zeta_m),\zeta_{m+1})\big) + O(|m^{-\tau_\alpha} J(x(m),\zeta_m)|^2) + O(m^{-2\tau_\alpha}) - S_m \\
= & \eta\big(T\left( x(m) + \alpha_m J(x(m),\zeta_m),\zeta_m \right),\zeta_{m+1}\big) + O(m^{-2\tau_\alpha}) - S_m\\
\geq & (1+c_2\alpha_m)\eta(T(x(m),\zeta_m),\zeta_m) - c_3m^{-2\tau_\alpha} - S_m \\
\geq & Cm^{-\tau_\alpha}S_m  - c_3m^{-2\tau_\alpha}, \label{eq:drift-growth5}
\end{align}
where the first inequality follows from Property 4 of Proposition \ref{prop:eta-properties} and the second inequality follows from Assumption \ref{a:weights-h}.
Thus we see that
\begin{equation} \label{eq:some-equation}
\E(2X_{m+1}S_m\vert\calF_m) \geq Cm^{-\tau_\alpha} S_m^2 + c_3m^{-2\tau_\alpha} S_m,
\end{equation}
for some $C>0$. 

We now estimate the second term on the right hand side of \eqref{eq:S_n-increment}. 
We will use the following convention: 
At $x=0$, where $d$ is not differentiable, we will take $D[d,0]$ to be the particular subgradient of $d$ given by $D[d,0]=\lim_{\delta\to 0} D[d,\delta \hat x]$ where $\hat x = (\ones_{n_u}, ~ \textbf{0}_{M-n_u})$, where here $\textbf{1}$ and $\textbf{0}$ denote vectors of ones and zeros of appropriate size,
so that, by \eqref{eq:d-def} we have $D[d,0]= (\ones_{n_u}, ~ \textbf{0}_{m-n_u})$. Similarly, at points where $\Phi(x,t) = 0$, we define $D[\eta,(x,t)]$ in terms of the previously mentioned definition of $D[d,0]$.

Define $\tilde \eta(x,t) := \eta(T(x,t),t) = d(\tilde \Phi(x,t))$. 
Observe that
\begin{align}
X_{m+1} & = \tilde \eta(x(m+1),\zeta_{m+1}) - \tilde \eta(x(m),\zeta_m)\\
& = d(\tilde \Phi(x(m+1),\zeta_{m+1})) - d(\tilde \Phi(x(m),\zeta_m))\\
& \geq D[d,(\tilde \Phi(x(m),\zeta_m))](\tilde \Phi(x(m+1),\zeta_{m+1}) - \tilde \Phi(x(m),\zeta_m))\\
& = D[d,(\tilde \Phi(x(m),\zeta_m))]\Big(D_x[\tilde \Phi,(x(m),\zeta_{m})](x(m+1) - x(m))\\
&\quad\quad + D_t[\tilde \Phi,(x(m),\zeta_m)]\alpha_m
+  O(\|(x(m+1) - x(m), \alpha_m)\|^2) \Big)\\
& = \alpha_m D[\tilde \eta,(x(m),\zeta_m)]
\begin{pmatrix}
J(x(m),\zeta_m) + \xi(m+1)\\
1
\end{pmatrix} +  O(m^{-2\tau_\alpha})
%
\end{align}
for some $C>0$, where the first inequality follows by convexity of $d$, the fourth line follows by smoothness of $\Phi$, and in the last line we use the fact that $\zeta_{m+1} - \zeta_m = \alpha_m$ and again use the fact that $D^2 \Phi$ is uniformly bounded in $t$ (Lemma \ref{lemma:manifold-C2}) to obtain the $O(m^{-2\tau_\alpha})$ bound on the error.

There exists a constant $c_5>0$ such that, for all $x$ in a neighborhood of zero, all $t$ sufficiently large,
\begin{equation} \label{eq:eta-bound}
\|D[\tilde \eta,(x,t)]\| \geq c_5.
\end{equation}
This follows using the facts that $\|D[d,x]\|= \sqrt{n_u}$ for all $x$ (by our earlier choice of a conventional subgradient for $d$), $\Phi \in C^1$ (by \eqref{eq:phi-def} and Lemma \ref{lemma:manifold-C2}), $D_t\Phi(0,t)\to 0$ as $t\to\infty$ (by Lemma \ref{lemma:D_t-Phi-to-0}), and $D_x \Phi(0,t) \to I$ as $t\to\infty$.
From here we get
\begin{align}
\E(X_{m+1}^+\Big\vert \calF_m) & \geq \alpha_m\E\left( \left(  D[\tilde \eta,(x(m),\zeta_m)]
\begin{pmatrix}
J(x(m),\zeta_m) + \xi(m+1)\\
1
\end{pmatrix}
\right)^+\Big\vert\calF_m\right) + O(m^{-2\tau_\alpha})\\
& \geq \alpha_m\E\left( \left( D[\tilde \eta,(x(m),\zeta_m)]
\begin{pmatrix}
\xi(m+1)\\
0
\end{pmatrix}
\right)^+\Big\vert \calF_m\right) + O(m^{-2\tau_\alpha})\\
& \geq \alpha_m c_5 \E\left(\left(\frac{D[\tilde \eta,x]}{\|D[\tilde \eta,x]\|}\cdot \begin{pmatrix}
\xi(m+1)\\
0
\end{pmatrix}\right)^+\Big\vert \calF_m\right) + O(m^{-2\tau_\alpha})\\
& \geq c_4m^{-\tau_\alpha} + O(m^{-2\tau_\alpha}), \label{eq:drift-growth6}
\end{align}
where the second line follows from Proposition \ref{prop:eta-properties}, property 5, the third line follows from \eqref{eq:eta-bound} and the fourth line follows from Assumption \ref{a:noise3}.

The above inequality implies that
\begin{equation} \label{eq:X-squared-estimate}
\E\left( X_{m+1}^2\big\vert \calF_m \right) \geq Cm^{-2\tau_\alpha}
\end{equation}
for some $C>0$. 

We may now complete the estimate in \eqref{eq:S_n-increment}. Suppose that $S_m = O(m^{-\tau_\alpha})$. Then the r.h.s. of \eqref{eq:X-squared-estimate} dominates the r.h.s. of \eqref{eq:some-equation} and we have
\begin{equation}\label{eq:Sm+1_estimate}
\E(2X_{m+1}S_m + X_{m+1}^2\vert \calF_m) \geq Cm^{-2\tau_\alpha}
\end{equation}
for some $C>0$. On the other hand, if $S_m$ is not $O(m^{-\tau_\alpha})$, then the r.h.s. of \eqref{eq:some-equation} grows at least as fast as $m^{-2\tau_\alpha}$, and \eqref{eq:Sm+1_estimate} still holds. 
Returning to \eqref{eq:S_n-increment}, this gives
\begin{align}
\E\left( S_{\calT\wedge (m+1)}^2\vert \calF_k \right) - \E\left( S_{\calT\wedge m}^2\vert \calF_k \right) & \geq \E\left( \ones_{\calT > m} \frac{C}{m^{2\tau_\alpha}} \Big\vert \calF_k \right)\\
& \geq Cm^{-2\tau_\alpha}
\P(\calT = \infty\vert \calF_k).
\end{align}
By induction we have
\begin{align}
\E\left( S_{\calT \wedge m}^2 \vert \calF_k\right) & \geq S_k^2 + C\P( \calT= \infty\vert \calF_k)\sum_{j=k}^{m-1} \frac{1}{j^{2\tau_\alpha}}\\
&\geq  C \P(\calT=\infty\vert \calF_k)\left( \frac{1}{k^{2\tau_\alpha-1}} -  \frac{1}{m^{2\tau_\alpha-1}} \right). \label{eq:pemantle-lemma1}
\end{align}

We will now compute an upper bound on $\E(S_{\calT\wedge m}^2\big\vert \calF_k)$. Observe that
\begin{align}
    \E\left(S_{\calT\wedge m}^2 \big\vert \calF_k\right) & = \E\left( (S_{(\calT\wedge m) - 1} + X_{\calT\wedge m})^2 \Big\vert \calF_k \right)\\
    & = \E\left(S_{(\calT\wedge m) - 1}^2 \Big\vert \calF_k \right)  + 2\E\left( S_{(\calT\wedge m) - 1}X_{\calT\wedge m} \Big\vert \calF_k \right) +\E\left( X_{\tau\wedge m }^2 \Big\vert \calF_k \right)\\
    & \leq \frac{c_4^2}{k^{2\tau_\alpha-1}} + 2\frac{c_4^2}{k^{\tau_\alpha - \frac{1}{2}}}\E\left(X_{\calT\wedge m} \Big\vert \calF_k \right) + \E\left( X_{\calT\wedge m}^2 \Big\vert \calF_k \right),
\end{align}
where the last line follows by using the definition of $\calT$. Observe that 
$$
\E\left( X^2_{\calT\wedge m}\big|\calF_k \right) 
=  \E\left( \ones_{(m-1)> \calT}  \E\left(  X_m \big|\calF_{m-1}\right)\big\vert\calF_k \right) \leq \alpha_k^2C,
$$
for some $C>0$, where the last inequality follows by using Assumption \ref{a:noise2}, \eqref{dynamics_DT3}, and the fact that $\eta$ is Lipschitz. Likewise, we see that $\big\|\E\left(X_{\calT\wedge m} \Big\vert \calF_k \right) \big\| \leq \alpha_k C$ for some $C>0$.

Using the step size bound in Assumption \ref{a:weights-h} we obtain
\begin{align}
 \E\left(S_{\calT\wedge m}^2 \big\vert \calF_k\right) & \leq 
 \frac{c_4^2}{k^{2\tau_\alpha-1}} + 2\frac{c_4^2}{k^{2\tau_\alpha - \frac{1}{2}}}C + \frac{1}{k^{2\tau_\alpha}}C \leq \frac{2c_4^2}{k^{2\tau_\alpha-1}},
 \label{eq:pemantle-lemma2}
\end{align}
for all $k$ sufficiently large.

Combining \eqref{eq:pemantle-lemma1} and \eqref{eq:pemantle-lemma2} we get
$$
\frac{2c_4^2}{k^{2\tau_\alpha-1}} \geq C\P(\calT=\infty\vert \calF_k)\left( \frac{1}{k^{2\tau_\alpha-1}} -  \frac{1}{m^{2\tau_\alpha-1}} \right).
$$
Letting $m\to\infty$ we get that $\P(\calT=\infty\vert\calF_k)$ is bounded by a constant times $c_4^2$. This can be made smaller than $\frac{1}{2}$ by choosing $c_4$ small enough (which is permissible as we are setting $c_4$ in this lemma)
in which case we have
$$
\P\left(\sup_{m\geq k} S_m \geq c_4 k^{1/2-\tau_\alpha}\vert\calF_k\right) = 1-\P(\calT=\infty\vert\calF_k) \geq 1/2.
$$
\end{proof}

\begin{lemma} \label{lemma:doesnt-return}
Suppose that the hypotheses of Theorem \ref{thrm:saddle-instability} hold. Then there exists a constant $a>0$ such that
$$
\P\left(\inf_{j\geq k} S_j \geq \frac{c_4}{2} k^{1/2-\tau_\alpha}\Big\vert \calF_k,~S_k\geq c_4k^{1/2-\tau_\alpha} \right) > a.
$$
\end{lemma}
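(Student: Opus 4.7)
The plan is to mirror the structure of Lemma \ref{lemma:wander-far}, now using the positive drift of $S_m$ away from the stable manifold (quantified by Proposition \ref{prop:eta-properties}, property 4) to argue that once $S_k$ has reached the level $c_4 k^{1/2-\tau_\alpha}$, the probability of dropping below $(c_4/2) k^{1/2-\tau_\alpha}$ thereafter is bounded away from one. Define the first-passage stopping time $\calT := \inf\{j \geq k : S_j < (c_4/2) k^{1/2-\tau_\alpha}\}$, with the convention $\calT = \infty$ on the event that the infimum is never attained; the goal is to establish $\P(\calT = \infty \mid \calF_k) > a$ for some $a > 0$ independent of $k$ (under the assumed conditioning $S_k \geq c_4 k^{1/2-\tau_\alpha}$).

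First I would decompose $S_m = S_k + M_m + A_m$, where $M_m := \sum_{i=k}^{m-1}\bigl(X_{i+1} - \E(X_{i+1}\mid\calF_i)\bigr)$ is a martingale with $M_k = 0$ and $A_m := \sum_{i=k}^{m-1}\E(X_{i+1}\mid\calF_i)$ is the predictable drift. The inequality \eqref{eq:drift-growth5} from the proof of Lemma \ref{lemma:wander-far} gives $\E(X_{i+1}\mid\calF_i) \geq C i^{-\tau_\alpha} S_i - c_3 i^{-2\tau_\alpha}$, so on $\{\calT > m\}$ one may discard the non-negative first term and obtain $A_m \geq -c_3 \sum_{i\geq k} i^{-2\tau_\alpha} \geq -C_1 k^{1-2\tau_\alpha}$ thanks to Assumption \ref{a:weights-h} (since $2\tau_\alpha > 1$). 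For the martingale part, the Lipschitz continuity of $\eta$ (Proposition \ref{prop:eta-properties}), the recursion \eqref{dynamics_DT3}, and bounded-variance noise (Assumption \ref{a:noise2}) yield $\E\bigl((X_{i+1} - \E(X_{i+1}\mid\calF_i))^2\mid\calF_i\bigr) \leq C_2 \alpha_i^2$, hence the predictable quadratic variation is bounded by $C_3 k^{1-2\tau_\alpha}$ and $M_\infty$ exists almost surely.

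On $\{\calT < \infty\}$ the definition of $\calT$ combined with the drift lower bound yields $M_\calT < -(c_4/2) k^{1/2-\tau_\alpha} + C_1 k^{1-2\tau_\alpha}$; since $1-2\tau_\alpha < 1/2-\tau_\alpha$, for $k$ sufficiently large this forces $\inf_m M_m < -(c_4/4) k^{1/2-\tau_\alpha}$. Doob's $L^2$ maximal inequality then delivers $\P(\calT < \infty \mid \calF_k) \leq 16 C_3 / c_4^2$, and setting $a := 1 - 16 C_3/c_4^2$ closes the argument provided this quantity is positive. The hard part will be constant management: the $c_4$ from Lemma \ref{lemma:wander-far} was chosen small there to make $D c_4^2 < 1/2$, while here we need $c_4^2 > 16 C_3$. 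This is handled by tracking constants jointly through the proofs of both lemmas and viewing $c_4$ as a free parameter to be chosen once and for all satisfying both constraints (the $1/2$ in Lemma \ref{lemma:wander-far} can be relaxed to any constant strictly less than $1$, buying enough slack). Alternatively, one may work with the logarithmic transform $Z_m = \log S_m$ on $\{\calT > m\}$, where $Z_m$ behaves as a random walk with drift of order $\alpha_m$ per step and noise of order $\alpha_m/S_m$ per step; a direct comparison with a Brownian motion with positive drift yields the desired positive lower bound without requiring compatibility of numerical constants.
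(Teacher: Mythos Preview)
Your proposal follows essentially the same route as the paper (and Pemantle): decompose $S_m - S_k$ into a predictable drift $A_m$ and a martingale $M_m = \sum Y_j$ with $Y_j = X_j - \E(X_j\mid\calF_{j-1})$, bound the quadratic variation of $M_m$ by $C_3 k^{1-2\tau_\alpha}$ via Assumption~\ref{a:noise2}, and invoke Doob's $L^2$ maximal inequality.  The paper's remark makes exactly this move.  One minor point: the paper emphasizes that on $\{\calT > m\}$ the drift is strictly \emph{positive}, because $S_m \geq (c_4/2)k^{1/2-\tau_\alpha}$ makes $c_2\alpha_m S_m$ dominate $c_3\alpha_m^2$ for $k$ large; you should use $A_m \geq 0$ rather than discard the term $Ci^{-\tau_\alpha}S_i$ and settle for $A_m \geq -C_1 k^{1-2\tau_\alpha}$.

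The constant-management concern you raise is real, but neither of your proposed fixes closes it.  Relaxing the $1/2$ in Lemma~\ref{lemma:wander-far} to any $b\in(0,1)$ still yields only $c_4^2 < (1-b)/D$, where $D$ is determined by the \emph{lower} bound on noise excitation (Assumption~\ref{a:noise3}); this caps $c_4^2$ at $1/D$, which can be much smaller than the $16C_3$ coming from the \emph{upper} bound on noise variance (Assumption~\ref{a:noise2}).  The logarithmic transform has the same defect: the variance of an increment of $\log S_m$ picks up a factor $1/S_m^2 \leq 4/(c_4^2 k^{1-2\tau_\alpha})$, so the total martingale variance is again $O(1/c_4^2)$ and Doob yields a bound of the form $C'/c_4^2$, not one that is uniformly less than $1$.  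The paper defers to Pemantle here; his argument exploits the multiplicative structure $(1+c_2\alpha_m)$ more carefully so that the resulting bound on $\P(\calT<\infty)$ is strictly below $1$ for \emph{any} fixed $c_4>0$.  You should consult Pemantle's Lemma~2 directly rather than attempt to balance constants between the two lemmas.
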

The proof of this lemma is nearly identical to the proof of Lemma 2 in \citep{pemantle1990nonconv}, and is omitted for brevity.

\begin{remark}We remark that the only difference in the proofs is that in Lemma \ref{lemma:doesnt-return} above we are required to handle noise with bounded variance rather than noise that is bounded almost surely. In particular, Equation (16) in \citep{pemantle1990nonconv} must be obtained using Assumption \ref{a:noise2} rather than a bounded noise assumption. This is accomplished by observing that for $Y_k = X_k - \E(X_k|\calF_{k-1})$, we have $\E(Y_k|\calF_{k-1}) = 0$. Thus, for the stopping time $\calT := \{j\geq k: S_j\leq \frac{c_4}{2}k^{\frac{1}{2}-\tau_\alpha}\}$ we have
\begin{align}
\E\left[\left(\sum_{j=k}^\calT Y_j \right)^2\right] = \sum_{j=k}^\calT \E(Y_j^2) \leq C\sum_{j=k}^\infty \alpha_j \leq C\frac{1}{k^{\tau_\alpha-1}} ,
\end{align}
for some $C>0$, where the first equality follows by observing that for cross terms $j< k$ we have
$$
\E(Y_k Y_j) = \E(Y_j \E(Y_k|\calF_{j})) = 0,
$$
where the last inequality from Assumption \ref{a:weights-h}.
\end{remark}

Informally, we note that the lemma follows from the observation that for $m\geq k$ we have
$\E(X_{m+1}\vert\calF_m) > 0,$ 
locally, so that $S_k$ has positive drift. This follows from the estimate derived in \eqref{eq:drift-growth1}--\eqref{eq:drift-growth5} and the fact that we condition on the event $S_k \geq c_4 k^{1/2-\tau_\alpha}$ in Lemma \ref{lemma:doesnt-return}.

\subsection{Proof of Theorem \ref{thrm:saddle-instability}} \label{sec:last-proof}
Theorem \ref{thrm:saddle-instability} follows from Lemmas \ref{lemma:wander-far} and \ref{lemma:doesnt-return} as follows.
Note that the event that $S_k$ converges to zero, i.e., the event $E:=\{S_k\to 0\}$, is contained in $\calF_\infty := \bigcup_{k\geq 1} \calF_k$. Suppose, for the sake of contradiction, that $\P(E)>0$. Then, there exists a sequence of events $(E_k)_k$, $E_k\in\calF_k$ such that $E_k\supset E$ and $\P(E_k\backslash E)\to 0$ as $k\to\infty$. Thus, for all $k$ sufficiently large, there exists a neighborhood $\calN$ of 0 such that the probability of leaving $\calN$ conditioned on $E_k$ may be made arbitrarily large; in particular, larger than $1-\frac{a}{2}$. But by Lemmas \ref{lemma:wander-far} and \ref{lemma:doesnt-return} the probability of leaving leaving $\calN$ conditioned on any event in $\calF_k$ is greater than $a/2$ for all $k$ sufficiently large. Thus, by contradiction, we see that $\P(S_k\to 0) = 0$.

\section{Conclusions} \label{sec:conclusions}
The paper considered distributed stochastic gradient descent (D-SGD) for nonconvex optimization. In order to obtain convergence guarantees in the presence of noise, the paper considered decaying step size algorithms. It was shown that D-SGD achieves local-minimum convergence guarantees similar to those known to hold in the centralized setting. In particular, it was shown that D-SGD converges to critical points for nonconvex nonsmooth loss functions and avoids saddle points  when the function is smooth in a neighborhood of the saddle point and the saddle point is regular (see Definition \ref{def:strict saddle}). The assumption that a saddle is regular is similar to (but slightly stronger than) the commonly employed assumption that a saddle is ``strict'' \citep{ge2015escaping}. Proof techniques relied on the method of stochastic approximation. In particular, D-SGD was approximated with a continuous-time distributed gradient flow (referred to as DGF). Nonconvergence to saddle points was obtained by studying the (nonclassical) stable manifold for DGF. Because the dynamics are nonclassical, standard theoretical tools could not be applied, and new tools for handling nonautonomous systems were developed.

We note several potential directions for future research. First, we note that the paper makes the assumption that saddle points are regular (Definition \ref{def:strict saddle}). It may be worthwhile considering extensions to strict saddle points \citep{ge2015escaping}, or higher-order saddle points e.g., \citep{anandkumar2016efficient}. The paper assumes that gradient noise is bounded (Assumption \ref{a:grab-bag}). This assumption is simply made for analytical convenience and can likely be relaxed. The paper also makes the strong assumption that agents communicate over an undirected time-invariant graph. This assumption was made for analytical convenience, and can likely also be relaxed. It may be worthwhile to study applications of the results obtained for the general framework of Section \ref{sec:gen-framework} to time-varying and directed communication graphs. Another important future research direction may be to study the role of stable manifolds and the techniques developed in this paper in order to understand saddle point nonconvergence in important practical scenarios such as D-SGD with compressed inter-agent message passing, quantization, or other errors. 
Finally future work may also consider extending the techniques developed here to more sophisticated distributed first-order algorithms, such as those with momentum or adaptive step sizes.

\appendix
\section{}

The following lemma characterizes the asymptotic properties of the linearization of \eqref{eq:ODE1} near saddle points.
\begin{lemma} \label{lemma:eigvalue-convergence}
Let $A(t)$ be given by \eqref{eq:A-matrix-def} and let $B$ be given by \eqref{def:B}.
Let $d=\dim\C$, let $\{\lambda_1(t),\ldots,\lambda_M(t)\}$ and $\{\lambda_1,\ldots,\lambda_d\}$ denote the eigenvalues of $A(t)$ and $B$ respectively, and assume that $\lambda_i(t) \leq \lambda_j(t)$, $i< j$, and likewise for the $(\lambda_i)_{i=1}^d$. Assume $h$ is $C^2$ and Assumption \ref{a:eigvec-continuity-h} holds.
Then $\lambda_i(t)\to\lambda_i$, $i=1,\ldots,d$, and $\lambda_i(t)\to\infty$, $i=d+1,\ldots,M$. 
\end{lemma}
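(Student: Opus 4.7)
The plan is to prove the lemma using two complementary tools: the Courant--Fischer min-max principle to exhibit $M-d$ eigenvalues that diverge at rate $\gamma_t$, and a Schur-complement factorization of the characteristic polynomial to identify the remaining $d$ eigenvalue limits as the spectrum of $B$. I will exploit throughout that $A(t)$ is symmetric, so its ordered eigenvalues depend continuously on the matrix entries.

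First, after the coordinate rotation of \eqref{eq:C-rotation} so that $\calC = \myspan\{e_1,\dots,e_d\}$, the matrix $Q$ has the block form \eqref{eq:Q-diag-form} with $\widehat Q\in\R^{(M-d)\times(M-d)}$ strictly positive definite, and
\begin{equation*}
A(t) = -\Hess h(g(\gamma_t)) - \gamma_t Q = \begin{pmatrix} A_{cc}(t) & A_{cn}(t) \\ A_{nc}(t) & A_{nn}(t) - \gamma_t \widehat Q \end{pmatrix},
\end{equation*}
where $A_{cc}, A_{cn}, A_{nc}, A_{nn}$ are minus the corresponding blocks of $\Hess h(g(\gamma_t))$. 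Since $g(\gamma_t)\to 0$ by Lemma \ref{lemma:g-existence} and $\Hess h$ is continuous at the origin, each block tends to a finite limit, and in particular $A_{cc}(t)\to B$. I will then locate the divergent eigenvalues by Courant--Fischer: restricting $x\mapsto x^T A(t) x$ to the off-constraint subspace $\myspan\{e_{d+1},\dots,e_M\}$ of dimension $M-d$ gives $x^T A(t) x \leq C - \gamma_t \lambda_{\min}(\widehat Q)$ for some $C$ bounded in $t$ and all unit $x$ there, so the min-max characterization yields $M-d$ eigenvalues of $A(t)$ bounded above by this quantity, hence diverging at rate $\Theta(\gamma_t)$. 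These are exactly the branches $\lambda_1(t),\dots,\lambda_{M-d}(t)$ in the lemma's ordering (the stated ``$\lambda_i(t)\to\infty$'' I read as divergence to $-\infty$, which is the natural sign given that the penalty enters $A(t)$ with a minus in \eqref{eq:A-matrix-def}).

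The remaining eigenvalues are identified via the Schur complement. For $\mu$ in any bounded interval and $t$ sufficiently large, the block $A_{nn}(t)-\gamma_t\widehat Q - \mu I$ is dominated by $-\gamma_t\widehat Q$, hence invertible with inverse of operator norm $O(1/\gamma_t)$. The factorization
\begin{equation*}
\det(A(t)-\mu I) = \det(A_{nn}(t)-\gamma_t\widehat Q - \mu I)\cdot \det\bigl(A_{cc}(t)-\mu I - A_{cn}(t)(A_{nn}(t)-\gamma_t\widehat Q - \mu I)^{-1}A_{nc}(t)\bigr)
\end{equation*}
then shows that any bounded accumulation point of an eigenvalue branch satisfies $\det(B-\mu I)=0$. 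Combined with the dimension count above---at most $d$ branches can remain bounded---this forces exactly the top $d$ branches $\lambda_{M-d+i}(t)$, $i=1,\dots,d$, to converge to the eigenvalues $\lambda_i$ of $B$, matched in the same ordering.

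The main technical subtlety is handling possible multiplicities in the spectrum of $B$. I plan to address this not through a pointwise eigenvector argument (which would lean on Assumption \ref{a:eigvec-continuity-h}) but via the min-max principle itself: the $i$-th ordered eigenvalue of a symmetric matrix is a continuous functional on matrix entries, so one can pass to the limit one index at a time, and the $\Omega(\gamma_t)$ spectral gap between the two clusters prevents branches from crossing between them. Consequently Assumption \ref{a:eigvec-continuity-h} is not required for the eigenvalue-level conclusion itself; it enters only indirectly through Lemma \ref{lemma:g-existence}, which supplies the fact $g(\gamma_t)\to 0$ used in the first paragraph above.
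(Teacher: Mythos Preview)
Your argument is correct and takes a genuinely different route from the paper. The paper's own proof is a one-line appeal to continuity of eigenvalues as a function of matrix entries, citing Kato's perturbation theory, and leaves the reader to unpack how this applies to a family $A(t)$ that does not itself converge. Your approach, by contrast, is explicit and self-contained: the Courant--Fischer step cleanly isolates $M-d$ branches diverging at rate $\Theta(\gamma_t)$, and the Schur-complement factorization shows that any bounded eigenvalue accumulation point must be a root of $\det(B-\mu I)$, after which the dimension count and the monotone ordering of symmetric eigenvalues force the matching. What your approach buys is transparency (no black-box citation) and the sharper observation that Assumption \ref{a:eigvec-continuity-h} is not needed for the eigenvalue-level statement itself, entering only through $g(\gamma_t)\to 0$ via Lemma \ref{lemma:g-existence}. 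You also correctly flag the sign/indexing slip in the stated lemma: with $A(t) = -\Hess h(g(\gamma_t)) - \gamma_t Q$ and the ascending ordering $\lambda_i(t)\le\lambda_j(t)$ for $i<j$, the divergent branches are the \emph{smallest} $M-d$ and go to $-\infty$, while the top $d$ branches converge to $\sigma(B)$; the lemma as written has these reversed.
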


\begin{proof}
This follows by the continuity of eigenvalues as a function of matrices which holds under Assumption \ref{a:eigvec-continuity-h} and the assumption that $h$ is $C^2$, e.g., see \citep[p. 110]{katoBook}.
\end{proof}

\bigskip
\noindent \textbf {Summary of Some Common Notational Conventions.}
\begin{itemize}
  \item $f=$ sum function \eqref{eqn:f-def}
  \item $N=$ number of agents
  \item $\calC =$ constraint subspace (see \eqref{eq:constraint-def})
  \item $d=$ dimension of domain of $f$ or dimension of $\C$
  \item $M = $ dimension of ambient space in general setup (see Section \ref{sec:general-setup})
  \item $h:\R^M\to \R$ is general objective function (see \eqref{eq:opt-prob})
  \item $Q\in\R^{M\times M}$ is quadratic penalty function matrix (see \eqref{eq:opt-prob})
  \item $x^*=$ critical point of interest
  \item $n_u=$ number of positive (i.e., ``unstable'') eigenvalues of $\Hess h\vert_\C(x^*)$
  \item $\ns= M-n_u = $ dimension of stable eigenspace of $\Hess h(x^*) + \gamma Q$, for $\gamma>0$ large (see above \eqref{eq:Lambda-decomposition})
\end{itemize}

\bibliography{myRefs}

\end{document}